  \newcommand{\Ho}{\operatorname{Ho}}
  \newcommand{\op}{{\mathord\mathrm{op}}}
  \newcommand{\colim}{\operatorname{colim}}
  \newcommand{\bd}{\partial}
  \newcommand{\simp}[1]{\mathord\Delta^{#1}}
  \def\horn#1{\expandafter\horn@i#1,,\@nil}
  \def\horn@i#1,#2,#3\@nil{\Lambda^{#1}_{#2}}
  \renewcommand{\tilde}{\widetilde}
  \newcommand{\cat}[1]{\mathscr{#1}}
  \newcommand{\ncat}[1]{\mathsf{#1}}
  \newcommand{\Simp}{\Delta}
  \newcommand{\sSet}{\ncat{sSet}}
  \newcommand{\from}{\colon}
  \declaretheorem[style=definition,within=section]{definition}
  \declaretheorem[style=definition,numberlike=definition]{notation}
  \declaretheorem[style=definition,numberlike=definition]{example}
  \declaretheorem[style=definition,numberlike=definition]{remark}
  \declaretheorem[style=plain,numberlike=definition]{corollary}
  \declaretheorem[style=plain,numberlike=definition]{lemma}
  \declaretheorem[style=plain,numberlike=definition]{proposition}
  \declaretheorem[style=plain,numberlike=definition]{theorem}
  \declaretheorem[style=plain,numbered=no,name=Theorem]{theorem*}
  \Crefname{corollary}{Corollary}{Corollaries}
  \Crefname{definition}{Definition}{Definitions}
  \Crefname{notation}{Notation}{Notations}
  \Crefname{lemma}{Lemma}{Lemmas}
  \Crefname{proposition}{Proposition}{Propositions}
  \Crefname{remark}{Remark}{Remarks}
  \Crefname{theorem}{Theorem}{Theorems}
  \Crefname{construction}{Construction}{Constructions}
  \Crefname{example}{Example}{Examples}
  \Crefname{question}{Question}{Questions}
  \newlist{axioms}{enumerate}{1}
  \Crefname{axiomsi}{}{}
  \newenvironment{tikzeq*}
  {
    \begingroup
    \begin{equation*}
    \begin{tikzpicture}[baseline=(current bounding box.center)]
  }
  {
    \end{tikzpicture}
    \end{equation*}
    \endgroup
    \ignorespacesafterend
  }
  \tikzset
  {
    diagram/.style=
    {
      matrix of math nodes,
      column sep={4.3em,between origins},
      row sep={4em,between origins},
      text height=1.5ex,
      text depth=.25ex
    },
    over/.style={preaction={draw=white,-,line width=6pt}},
    every to/.style={font=\footnotesize},
    inj/.style={right hook->},
    surj/.style={-{Latex[open]}},
    cof/.style={>->},
    fib/.style={->>},
  }
  \DeclareFontFamily{U}{mathx}{\hyphenchar\font45}
  \DeclareFontShape{U}{mathx}{m}{n}{
    <5> <6> <7> <8> <9> <10>
    <10.95> <12> <14.4> <17.28> <20.74> <24.88>
    mathx10}{}
  \DeclareSymbolFont{mathx}{U}{mathx}{m}{n}
  \DeclareFontFamily{U}{mathb}{\hyphenchar\font45}
  \DeclareFontShape{U}{mathb}{m}{n}{
    <5> <6> <7> <8> <9> <10>
    <10.95> <12> <14.4> <17.28> <20.74> <24.88>
    mathb10}{}
  \DeclareSymbolFont{mathb}{U}{mathb}{m}{n}
  \DeclareMathAccent{\widebar}{0}{mathx}{"73}
  \DeclareMathSymbol{\Rsh}{\mathrel}{mathb}{"E9}
  \DeclareFontFamily{U}{MnSymbolA}{}
  \DeclareFontShape{U}{MnSymbolA}{m}{n}{
    <-6> MnSymbolA5
    <6-7> MnSymbolA6
    <7-8> MnSymbolA7
    <8-9> MnSymbolA8
    <9-10> MnSymbolA9
    <10-12> MnSymbolA10
    <12-> MnSymbolA12}{}
  \DeclareSymbolFont{MnSyA}{U}{MnSymbolA}{m}{n}
  \DeclareMathSymbol{\twoheaddownarrow}{\mathrel}{MnSyA}{27}
  \newcommand{\MSC}[1]{%
    \let\thempfn\relax
    \footnotetext[0]{2020 Mathematics Subject Classification: #1.}
  }
\newcommand{\cSet}[1][]{\mathsf{cSet}_{#1}} 
\newcommand{\Gpd}{\mathsf{Gpd}}
\newcommand{\Graph}{\mathsf{Graph}}
\newcommand{\Set}{\mathsf{Set}}
\newcommand{\V}[1]{{#1}_{V}} 
\newcommand{\E}[1]{{#1}_{E}} 
\newcommand{\sk}[1]{{\mathord\mathrm{sk}}_{#1}} 
\newcommand{\cosk}[1]{{\mathord\mathrm{cosk}}_{#1}} 
\newcommand{\Top}{\ncat{Top}}
\newcommand{\gtimes}{\ \square \ } 
\newcommand{\gexp}[2]{\ensuremath{#1}^{\square #2}} 
\newcommand{\image}{\operatorname{im}} 
\newcommand{\boxcat}{\mathord{\square}} 
\newcommand{\face}[2]{\partial^{#1}_{#2}} 
\newcommand{\degen}[2]{\sigma^{#1}_{#2}} 
\newcommand{\cube}[1]{\mathord{\square^{#1}}} 
\newcommand{\bdcube}[1]{\mathord{\bd\cube{#1}}} 
\newcommand{\obox}[2]{\mathord{\sqcap^{#1}_{#2}}} 
\newcommand{\Kan}{\mathsf{Kan}}
\newcommand{\gnerve}[1][]{\mathrm{N}_{#1}} 
\author{Krzysztof Kapulkin \and Udit Mavinkurve}
\title{Homotopy $n$-types of cubical sets and graphs}
\date{\today}
\begin{document}

  \maketitle

  \begin{abstract}
     We give a new construction of the model structure on the category of simplicial sets for homotopy $n$-types, originally due to Elvira-Donazar and Hernandez-Paricio, using a right transfer along the coskeleton functor.
     We observe that an analogous model structure can be constructed on the category of cubical sets, and use it to equip the category of (simple) graphs with a fibration category structure whose weak equivalences are discrete $n$-equivalences.
  \end{abstract}

  \setlist[enumerate]{label=(\arabic*)}

  
\section*{Introduction}

Discrete homotopy theory is a homotopy theory of graphs \cite{kramer-laubenbacher,babson-barcelo-longueville-laubenbacher,barcelo-laubenbacher,barcelo-kramer-laubenbacher-weaver}, in essence studying the combinatorial, rather than the topological, shape of graphs.
Its techniques have found numerous applications, both within and outside mathematics, including in: matroid theory, hyperplane arrangements, and data analysis.

A central open problem in the field is to determine whether the discrete homotopy theory of graphs and the classical homotopy theory of (topological) spaces are equivalent.
More precisely, the problem asks to show that the graph nerve functor of \cite{babson-barcelo-longueville-laubenbacher,carranza-kapulkin:cubical-graphs}, associating a cubical Kan complex to a graph, is a DK-equivalence of relative categories \cite{barwick-kan}.
The positive answer to this question, generally believed to be true by the experts, would allow the import of various techniques of classical homotopy theory into the discrete realm, e.g., the Mayer--Vietoris sequence or the Blakers--Massey theorem.

While interesting and broadly applicable, the problem has proven quite difficult.
In the present paper, we propose a new line of attack by establishing the theory of discrete homotopy $n$-types.
These are graphs whose non-trivial discrete homotopy groups are concentrated in degrees up to $n$.
We expect that proving an equivalence between discrete and classical homotopy $n$-types will eventually pave the way to a proof of the aforementioned conjecture.
As a proof of concept, we show that discrete and classical homotopy $1$-types are indeed equivalent.

Our main result is the construction of a fibration category structure \cite{brown,szumilo-cofibration-categories}, a weakening of the definition of a model category \cite{quillen}, on the category of graphs whose weak equivalences are exactly the discrete $n$-equivalences, i.e., maps inducing isomorphisms on the sets of connected components and the first $n$ homotopy groups.
When $n = 1$, we prove that this fibration category is equivalent to the category of groupoids, which is known to model the classical homotopy $1$-types.

The techniques used to prove the result are of independent interest.
Mimicking the construction of a fibration category for discrete homotopy theory \cite{carranza-kapulkin:cubical-graphs}, we wish to pull back this fibration category structure via the cubical nerve functor from a suitable fibration category of cubical sets.
Here, special care is needed to establish a suitable model of homotopy $n$-types on the category of cubical sets and, unfortunately, the naive solution, i.e., a Bousfield localization of the Grothendieck model structure on cubical sets \cite{cisinski-book} would not work.
Indeed, pulling back this fibration category structure would in particular require that we be able to construct a discrete homotopy $n$-type approximating a given graph, which is not currently known.

Instead, we revisit a classic construction of Elvira-Donazar and Hernandez-Paricio \cite{Elvira-Donazar_Hernandez-Paricio_1995} of a model structure for homotopy $n$-types on the category of simplicial sets.
In this model structure, not all objects are cofibrant, which allows for the fibrancy condition to be relaxed.
In particular, all Kan complexes are fibrant.

Our first result, in \cref{sec:simplicial}, is a new construction of this model structure, using the right transfer \cite{hess-kedziorek-riehl-shipley} along the $(n+1)$-coskeleton functor.
Then, in \cref{sec:cubical}, we observe that the proof applies almost verbatim to construct an analogous model structure for homotopy $n$-types on the category of cubical sets.
Finally, in \cref{sec:graphs}, we use the latter to construct a fibration category structure on the category of graphs whose weak equivalences are the $n$-equivalences and prove that, for $n=1$, this fibration category is equivalent to that of groupoids.

\textbf{Acknowledgement.} U.M.~would like to thank David White for pointing him towards \cite[Ex.~2.1.6]{hirschhorn} in answer to a question posted on MathOverflow.

\section{Homotopy \texorpdfstring{$n$}{n}-types of simplicial sets} \label{sec:simplicial}

In this section, we consider two known model category structures for the homotopy $n$-types of simplicial sets, i.e., two model structures where the weak equivalences are the simplicial maps inducing isomorphisms the sets of sets of connected components and the first $n$ simplicial homotopy groups.
The first of these is a left Bousfield localization of the standard Kan--Quillen model structure on the category of simplicial sets, and has been studied, for example, in \cite{hirschhorn} and \cite{cisinski:presheaves}.
The second model structure is due to Elvira-Donazar and Hernandez-Paricio \cite{Elvira-Donazar_Hernandez-Paricio_1995}.
We give a new proof for the existence of this second model structure and show that it is right-transferred from the first model structure along the $(n+1)$-coskeleton functor.
We also prove that the skeleton-coskeleton adjunction gives a Quillen equivalence between these two model structures.

Let $\Delta$ denote the simplex category.
Its objects are the posets $[n] = \left\{0 \leq \cdots \leq n\right\}$ for $n \in \mathbb{N}$, and its morphisms are all order preserving morphisms between these posets, generated under composition by the face maps and the degeneracy maps.
A simplicial set is a presheaf over $\Delta$.
We write $\sSet$ for the presheaf category $\Set^{\Delta^{\op}}$.
The representable presheaf $\Delta(-,[n])$ is denoted $\Delta^{n}$, its boundary is denoted $\partial\Delta^{n}$, and its $i$th horn is denoted $\Lambda^{n}_{i}$.

Consider the set
\[
    J_{n} = \left\{\Lambda^{k}_{i} \hookrightarrow \Delta^{k} \ \mid \ k > 0, \ 0 \leq i \leq k \right\} \cup \left\{\partial\Delta^{k} \hookrightarrow \Delta^{k} \ \mid \ k \geq n+2 \right\}.
\]
A simplicial map $f \from X \to Y$ is \emph{$n$-anodyne} if it belongs to the saturation of $J_{n}$.
A simplicial map $f \from X \to Y$ is a \emph{naive $n$-fibration} if it has the right lifting property with respect to $J_{n}$, and hence with respect to every $n$-anodyne map.
A simplicial set $X$ is \emph{$n$-fibrant} if the unique map $X \to \Delta^{0}$ is a naive $n$-fibration.

A simplicial map $f \from X \to Y$ is an \emph{$n$-equivalence} if it induces a bijection $f_{*} \from \pi_{0}X \to \pi_{0}Y$ and an isomorphism $f_{*} \from \pi_{k}(X,x) \to \pi_{k}(Y,fx)$ for every $0 < k \leq n$ and every $0$-simplex $x \in X_{0}$.
Here, $\pi_{0}X$ denotes the set of path-components of a simplicial set $X$ and $\pi_{k}(X,x)$ denotes the $k$th simplicial homotopy group of a pointed simplicial set $(X,x)$, as defined in \cite[\S~1.7]{goerss-jardine}.

A simplicial map $f \from X \to Y$ is an \emph{$n$-fibration} if it has the right lifting property with respect to every map that is both a monomorphism and an $n$-equivalence.

\begin{theorem}[\protect{\cite[\S~9.2]{cisinski:presheaves}}]\label{thm:cisinski-model-simplicial}
    The category $\sSet$ of simplicial sets admits a cofibrantly generated model category structure, where:
    \begin{itemize}
        \item the cofibrations are the monomorphisms,
        \item the fibrant objects are the $n$-fibrant simplicial sets, and
        \item the weak equivalences are the $n$-equivalences.
    \end{itemize}
    Moreover, a simplicial map $f \from X \to Y$ between $n$-fibrant simplicial sets $X$ and $Y$ is an $n$-fibration if and only if it is a naive $n$-fibration.
    We refer to this model category structure as the \emph{Cisinski model category structure for $n$-types of simplicial sets}, and denote it by $\mathrm{L}_{n}\sSet$.
    It is the left Bousfield localization of the Kan--Quillen model category structure on $\sSet$ at the class of $n$-equivalences.
    \qed
\end{theorem}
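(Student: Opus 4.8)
The plan is to realize $\mathrm{L}_{n}\sSet$ directly as a left Bousfield localization of the Kan--Quillen model structure and then match the localized data with the combinatorial descriptions in the statement. Since the Kan--Quillen model structure on $\sSet$ is left proper and combinatorial, its left Bousfield localization $\mathrm{L}_{S}\sSet$ at any set $S$ of maps exists \cite{hirschhorn}, and it is again a left proper combinatorial model structure whose cofibrations are still the monomorphisms and whose fibrant objects are the $S$-local objects. I would take
\[
    S = \left\{\partial\Delta^{k} \hookrightarrow \Delta^{k} \mid k \geq n+2\right\},
\]
that is, exactly the non-horn part of $J_{n}$, so that the localized structure has a chance of being controlled by $J_{n}$. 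It then remains to identify (i) the $S$-local objects with the $n$-fibrant simplicial sets, (ii) the $S$-local equivalences with the $n$-equivalences, and (iii) the fibrations between fibrant objects.

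For (i), the crucial point is that the extension problems posed by the maps in $S$ are homotopy-invariant in a Kan complex: for a Kan complex $X$, a map $\partial\Delta^{k} \to X$ extends along $\partial\Delta^{k} \hookrightarrow \Delta^{k}$ \iff{} it is nullhomotopic, and all such maps extend \iff{} $\pi_{k-1}(X,x)$ vanishes for every basepoint $x$. Hence, for a Kan complex, having the strict right lifting property against $S$ (which, together with the Kan condition, is precisely $n$-fibrancy) is equivalent to $\pi_{j}(X,x) = 0$ for all $j > n$, which is in turn equivalent to $X$ being $S$-local. This collapses the homotopical notion of local object onto the strict combinatorial notion of $n$-fibrant object, and it is really the heart of the theorem.

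For (ii), one inclusion is formal: the class of $n$-equivalences satisfies two-out-of-three and contains both the Kan--Quillen weak equivalences and the maps of $S$ (each $\partial\Delta^{k} \hookrightarrow \Delta^{k}$ with $k \geq n+2$ induces isomorphisms on $\pi_{j}$ for $j \leq k-2$, and $k-2 \geq n$); since the $S$-local equivalences form the smallest such class, every $S$-local equivalence is an $n$-equivalence. For the converse I would argue by fibrant replacement: given an $n$-equivalence $f \from X \to Y$, replace it by a map $\hat{f} \from \hat{X} \to \hat{Y}$ between $S$-local objects. The maps $X \to \hat{X}$ and $Y \to \hat{Y}$ are $S$-local equivalences, hence $n$-equivalences by the previous inclusion, so two-out-of-three makes $\hat{f}$ an $n$-equivalence; but $\hat{X}$ and $\hat{Y}$ are $n$-types, so a Whitehead-style argument shows $\hat{f}$ is a genuine weak homotopy equivalence, and therefore an $S$-local equivalence. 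Two-out-of-three then yields that $f$ is an $S$-local equivalence. I expect this converse inclusion to be the main obstacle, as it is precisely where the strict and homotopical worlds must be reconciled for non-fibrant objects.

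Finally, for (iii), general localization theory gives that a map between $S$-local objects is an $\mathrm{L}_{S}$-fibration \iff{} it is a Kan fibration, so it suffices to show that a Kan fibration $p \from X \to Y$ between $n$-fibrant objects is a naive $n$-fibration, i.e. lifts against the boundary inclusions in $S$. This follows from the fiberwise version of the argument in (i): by the long exact sequence of a fibration, the fibers of $p$ are themselves $n$-types, so each relative lifting problem against $\partial\Delta^{k} \hookrightarrow \Delta^{k}$ with $k \geq n+2$ reduces to the vanishing of a homotopy group of a fiber and is solvable. The reverse implication, that a naive $n$-fibration is a Kan fibration, is immediate because $J_{n}$ contains all horn inclusions.
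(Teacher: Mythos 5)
The paper does not actually prove this theorem: it is imported from Cisinski (\S 9.2 of \emph{Les pr\'efaisceaux comme mod\`eles des types d'homotopie}) with a \qedsymbol, and even the cubical analogue (\cref{thm:cisinski-model-cubical}) is later deduced from Cisinski's theory of anodyne classes rather than from a Hirschhorn-style localization. So your proposal is a genuinely independent route, and its architecture is the standard one: localize Kan--Quillen at $S=\{\partial\Delta^{k}\hookrightarrow\Delta^{k}\mid k\geq n+2\}$, identify the $S$-local objects with the $n$-fibrant simplicial sets via vanishing of $\pi_{j}$ for $j>n$, identify the $S$-local equivalences with the $n$-equivalences, and use the fact that fibrations between local objects agree with the original fibrations. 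Your steps (i) and (iii) are correct as written, including the long-exact-sequence argument showing that a Kan fibration between $n$-fibrant objects lifts against the high-dimensional boundary inclusions.

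The gap is in step (ii), and it infects both inclusions. You justify ``every $S$-local equivalence is an $n$-equivalence'' by asserting that the $S$-local equivalences form the \emph{smallest} class containing the Kan--Quillen weak equivalences and $S$ and satisfying two-out-of-three. That is not a correct characterization: the $S$-local equivalences are the smallest class containing those maps that is closed under two-out-of-three \emph{and} under homotopy pushout, transfinite composition, and retracts (the smallest strongly saturated class generated by $S$); the mere two-out-of-three closure of the weak equivalences together with $S$ is in general strictly smaller, so minimality of that smaller class gives you nothing about an arbitrary $S$-local equivalence. And since your converse inclusion relies on this forward inclusion to know that the replacement legs $X\to\hat{X}$ and $Y\to\hat{Y}$ are $n$-equivalences, all of (ii) is left unproved. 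Two repairs are available. (a) Keep the minimality argument but invoke the correct closure conditions, which then obliges you to verify that $n$-equivalences are closed under homotopy pushout (true, but a real theorem: it follows from van Kampen in low degrees, or from the fact that $n$-truncation, being a localization, preserves homotopy pushouts) and under transfinite composition (homotopy groups commute with filtered colimits along monomorphisms). (b) Avoid abstract minimality entirely: construct the fibrant replacement in the localized structure concretely by the small object argument on $J_{n}$ (or on Hirschhorn's augmented horns on $S$, whose pushout-products with boundary inclusions attach only simplices of dimension $\geq n+2$). Such a replacement $X\to\hat{X}$ is then seen directly to be both an $n$-equivalence (attaching cells of dimension $\geq n+2$ does not change $\pi_{\leq n}$) and an $S$-local equivalence (for $n$-fibrant $W$, the functor $\mathrm{Map}(-,W)$ sends each generating map, hence every relative cell complex built from them, to a trivial fibration), after which both inclusions in (ii) follow from two-out-of-three and Whitehead exactly as you intend.
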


We now move to the model category structure constructed in \cite{Elvira-Donazar_Hernandez-Paricio_1995}.
Our treatment differs from the original construction, and we give a new proof for the existence of this model category structure using the transfer theorem of \cite{hess-kedziorek-riehl-shipley}.
We begin by recalling some notation.

\begin{notation}
    Given any integer $n \geq -1$, let $\Delta_{\leq n}$ denote the full subcategory of $\Delta$ on the objects $[0], \ldots, [n]$, and let $\sSet_{\leq n}$ denote the category of presheaves over $\Delta_{\leq n}$.
    The inclusion $i_{n} \from \Delta_{\leq n} \to \Delta$ induces a forgetful functor
    \[
        i_{n}^{*} \from \sSet \to \sSet_{\leq n}.
    \]
    This functor admits both adjoints:
    \[
    \begin{tikzcd}
        \sSet \arrow[rr,"i_{n}^{*}" description] &&  \arrow[ll, bend right,"\bot","{i_{n}}_{*}"']
        \arrow[ll, bend left,"\bot"',"{i_{n}}_{!}"]
        \sSet_{\leq n}
    \end{tikzcd}
    \]
    We write $\sk{n}$ and $\cosk{n}$ for the composite functors
    \[
        \sk{n} =  {i_{n}}_{*} \circ i_{n}^{*} \from \sSet \to \sSet \qquad \text{and} \qquad 
        \cosk{n} =  {i_{n}}_{!} \circ i_{n}^{*} \from \sSet \to \sSet
    \]
    respectively.
    These are called the \emph{$n$-skeleton} and the \emph{$n$-coskeleton} functors respectively.
    These in turn form an adjoint pair:
    \[
    \begin{tikzcd}
        \sSet \arrow[rr,phantom,"\bot" description] \arrow[rr,shift left = 1.5ex,"\sk{n}"] &&
        \arrow[ll, shift left = 1.5ex,"\cosk{n}"]
        \sSet
    \end{tikzcd}
    \]
\end{notation}

\begin{definition}[cf. \protect{\cite[Def.~2.1.3]{hess-kedziorek-riehl-shipley}}]
    \leavevmode
    \begin{enumerate}
        \item A simplicial map $f \from X \to Y$ is a \emph{transferred naive $n$-fibration} if $\cosk{n+1}f \from \cosk{n+1}X \to \cosk{n+1}Y$ is a naive $n$-fibration.
        A simplicial set $X$ is \emph{transferred $n$-fibrant} if the unique map $X \to \Delta^{0}$ is a transferred naive $n$-fibration, or equivalently if $\cosk{n+1}X$ is $n$-fibrant.
        \item A simplicial map $f \from X \to Y$ is a \emph{transferred $n$-fibration} if $\cosk{n+1}f \from \cosk{n+1}X \to \cosk{n+1}Y$ is an $n$-fibration.
        \item A simplicial map $f \from X \to Y$ is a \emph{transferred $n$-cofibration} if it has the left lifting property with respect to every map that is both a transferred $n$-fibration and an $n$-equivalence.
    \end{enumerate}
\end{definition}

We will prove the following theorem.

\begin{theorem}[cf. \protect{\cite[Thm.~2.3]{Elvira-Donazar_Hernandez-Paricio_1995}}]\label{thm:transferred-model-simplicial}
    The category $\sSet$ of simplicial sets admits a cofibrantly generated model category structure, where:
    \begin{itemize}
        \item the cofibrations are the transferred $n$-cofibrations,
        \item the fibrations are the transferred $n$-fibrations, and
        \item the weak equivalences are the $n$-equivalences.
    \end{itemize}
    We refer to this model category structure as the Elvira--Hernandez model category structure for $n$-types of simplicial sets, and denote it by $\sSet_{n}$.
    It is right-transferred from the Cisinski model category structure for $n$-types of simplicial sets along the $(n+1)$-coskeleton functor $\cosk{n+1} \from \sSet \to \sSet$.
    Furthermore, the adjunction
    \[
    \begin{tikzcd}
        \mathrm{L}_{n} \sSet \arrow[rr,phantom,"\bot" description] \arrow[rr,shift left = 1.5ex,"\sk{n+1}"] &&
        \arrow[ll, shift left = 1.5ex,"\cosk{n+1}"]
        \sSet_{n}
    \end{tikzcd}
    \]
    is a Quillen equivalence between the Cisinski and the Elvira--Hernandez models for $n$-types of simplicial sets.
\end{theorem}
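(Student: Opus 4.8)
The plan is to right-induce a model structure on $\sSet$ from $\mathrm{L}_{n}\sSet$ along the right adjoint $\cosk{n+1}$ of the adjunction $\sk{n+1} \dashv \cosk{n+1}$, using the transfer theorem of \cite{hess-kedziorek-riehl-shipley}. Since $\sSet$ is locally presentable it is bicomplete and all objects are small, so the smallness hypothesis is automatic and the only substantive condition to verify is acyclicity. The whole argument rests on a single homotopical input, which I would isolate as a lemma first: \emph{for every simplicial set $X$, both the skeleton inclusion $\sk{n+1}X \to X$ and the coskeleton unit $X \to \cosk{n+1}X$ are $n$-equivalences}. Granting this, everything else is formal.

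To prove the lemma I would pass through geometric realization. The realization $\lvert \sk{n+1}X \rvert$ is precisely the $(n+1)$-skeleton of the CW complex $\lvert X \rvert$, whose cells are the nondegenerate simplices, and the inclusion of the $(n+1)$-skeleton of a CW complex induces isomorphisms on $\pi_{0}$ and on $\pi_{k}$ for $k \leq n$; hence $\sk{n+1}X \to X$ is an $n$-equivalence. For the coskeleton unit, I would use that $\sk{n+1}$ depends only on the restriction $i_{n+1}^{*}$ and that $X \to \cosk{n+1}X$ is an isomorphism in degrees $\leq n+1$, so the induced map $\sk{n+1}X \to \sk{n+1}\cosk{n+1}X$ is an isomorphism; in the resulting naturality square the two vertical skeleton inclusions are $n$-equivalences and the top map is an isomorphism, so two-out-of-three forces $X \to \cosk{n+1}X$ to be an $n$-equivalence. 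Two consequences then follow by two-out-of-three on naturality squares: both $\sk{n+1}$ and $\cosk{n+1}$ preserve $n$-equivalences, and $\cosk{n+1}$ moreover reflects them, so that $f$ is an $n$-equivalence if and only if $\cosk{n+1}f$ is. In particular the weak equivalences of the transferred structure coincide on the nose with the $n$-equivalences.

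For existence I would verify acyclicity as follows. Let $J$ be a set of generating acyclic cofibrations for $\mathrm{L}_{n}\sSet$; each $j \in J$ is a monomorphism and an $n$-equivalence. Since $\sk{n+1}$ preserves monomorphisms and (by the lemma) $n$-equivalences, each $\sk{n+1}j$ is again an acyclic cofibration of $\mathrm{L}_{n}\sSet$. As acyclic cofibrations are closed under pushout and transfinite composition, every relative $\sk{n+1}(J)$-cell complex is an acyclic cofibration of $\mathrm{L}_{n}\sSet$, hence an $n$-equivalence, and is therefore sent by $\cosk{n+1}$ to a weak equivalence. This is exactly the acyclicity condition, so the transfer theorem yields the cofibrantly generated model structure $\sSet_{n}$, with generating (acyclic) cofibrations $\sk{n+1}(I)$ and $\sk{n+1}(J)$, fibrations the maps $f$ with $\cosk{n+1}f$ an $n$-fibration, and weak equivalences the $n$-equivalences, matching the stated description; the cofibrations are then the maps with the left lifting property against the transferred acyclic fibrations, i.e. the transferred $n$-cofibrations.

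Finally, for the Quillen equivalence, the right adjoint $\cosk{n+1}$ is right Quillen by the very construction of the transfer, and by the lemma it reflects all weak equivalences, in particular those between fibrant objects. It remains to check the derived unit at every cofibrant object; as every object of $\mathrm{L}_{n}\sSet$ is cofibrant, this means every object. The ordinary unit factors as $X \to \cosk{n+1}\sk{n+1}X = \cosk{n+1}X$, which is exactly the coskeleton unit and hence an $n$-equivalence. Choosing a fibrant replacement $\sk{n+1}X \to Z$ in $\sSet_{n}$ and applying $\cosk{n+1}$, which preserves $n$-equivalences, shows the remaining factor of the derived unit is also a weak equivalence; thus the derived unit is a weak equivalence, and by the standard reflection criterion the adjunction is a Quillen equivalence (the derived counit is handled symmetrically, its ordinary value being the skeleton inclusion $\sk{n+1}Y \to Y$). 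I expect the lemma to be the main obstacle, and within it the statement that the coskeleton unit is an $n$-equivalence: once skeleton and coskeleton are known to be homotopically well behaved in degrees $\leq n$, the transfer and the Quillen equivalence are purely formal.
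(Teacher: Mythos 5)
Your proposal is correct, and its overall scaffolding is the same as the paper's: right transfer along $\sk{n+1} \dashv \cosk{n+1}$ via \cite[Cor.~3.3.4]{hess-kedziorek-riehl-shipley}, acyclicity verified by showing that $\sk{n+1}$ carries trivial cofibrations of $\mathrm{L}_{n}\sSet$ to trivial cofibrations, and the Quillen equivalence deduced from the identity $\cosk{n+1}\sk{n+1} = \cosk{n+1}$ together with the fact that $\cosk{n+1}$ preserves and reflects $n$-equivalences and the unit $X \to \cosk{n+1}X$ is an $n$-equivalence. Where you genuinely diverge is in how the homotopical input is established. The paper argues combinatorially: the counit $\sk{n+1}X \to X$ is shown to be $n$-anodyne by skeletal induction (each $\sk{k}X \to \sk{k+1}X$ with $k \geq n+1$ is a pushout of boundary inclusions lying in $J_{n}$, cf.\ \cref{lem:counit-skeleton-simplicial}), hence a monomorphism and an $n$-equivalence by \cite[Prop.~2.4.25]{cisinski-book}; and the unit is handled directly by the observation that it is bijective on simplices in degrees $\leq n+1$ (\cref{lem:unit-coskeleton-simplicial}, following \cite{Elvira-Donazar_Hernandez-Paricio_1995}). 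You instead prove the counit statement topologically, identifying $\lvert \sk{n+1}X \rvert$ with the CW $(n+1)$-skeleton of $\lvert X \rvert$ and invoking cellular approximation, and then derive the unit statement from it by two-out-of-three using $\sk{n+1}\cosk{n+1} \cong \sk{n+1}$. That last move is a nice dividend: it sidesteps the non-obvious fact that a map bijective on simplices in degrees $\leq n+1$ is an $n$-equivalence (whose cubical analogue costs the paper a full inductive argument, \cref{lem:n-bijective-maps}), at the price of importing geometric realization --- machinery the paper's combinatorial route avoids, which is also why that route transports verbatim to cubical sets in Section~2, whereas yours would require the realization comparison results of \cite{carranza-kapulkin:homotopy-groups}. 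Two small glosses worth making explicit: your acyclicity argument treats relative $\sk{n+1}(J)$-cell complexes, while the condition in \cite[Cor.~3.3.4]{hess-kedziorek-riehl-shipley} concerns all maps with the left lifting property against the transferred fibrations --- these are retracts of such cell complexes, and $n$-equivalences are closed under retracts, so the gap closes immediately; and the identification of the adjunction unit $X \to \cosk{n+1}\sk{n+1}X$ with $\eta_{X}$ under the equality $\cosk{n+1}\sk{n+1} = \cosk{n+1}$ deserves the one-line verification the paper gives it.
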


At first glance, it seems as though we did not transfer the class of $n$-equivalences.
The following couple of results tells us that the class of $n$-equivalences remains unchanged upon transferring along the $(n+1)$-coskeleton functor.

\begin{lemma}[\protect{\cite[Lem.~2.3]{Elvira-Donazar_Hernandez-Paricio_1995}}]\label{lem:unit-coskeleton-simplicial}
    For every simplicial set $X$, the canonical map $\eta_{X} \from X \to \cosk{n+1}X$ is an $n$-equivalence.
\end{lemma}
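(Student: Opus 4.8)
The plan is to reduce the claim to the elementary connectivity of skeletal inclusions after passing to geometric realizations. The starting observation is the standard property of the coskeleton that the canonical map $\eta_{X} \from X \to \cosk{n+1}X$ is an isomorphism on $k$-simplices for every $k \le n+1$; in particular it is the identity on $(n+1)$-skeleta, so that $\sk{n+1}X = \sk{n+1}\cosk{n+1}X$ and the inclusion of this common skeleton into $\cosk{n+1}X$ factors $\eta_{X}$ restricted to the skeleton. Concretely, the triangle
\[
\begin{tikzcd}
& X \arrow[d, "\eta_X"] \\
\sk{n+1}X \arrow[ur, hook] \arrow[r, hook] & \cosk{n+1}X
\end{tikzcd}
\]
commutes, and both maps out of $\sk{n+1}X$ are the respective $(n+1)$-skeletal inclusions.

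Next I would realize this triangle. The inclusion of the $(n+1)$-skeleton of any simplicial set realizes to a relative CW inclusion all of whose cells have dimension at least $n+2$; attaching cells of dimension $\ge n+2$ induces a bijection on $\pi_{0}$ and isomorphisms on $\pi_{k}(-,x)$ for $1 \le k \le n$ and every basepoint. Hence both legs $\reali{\sk{n+1}X} \to \reali{X}$ and $\reali{\sk{n+1}X} \to \reali{\cosk{n+1}X}$ induce the stated isomorphisms, and two-out-of-three in the realized triangle forces $\reali{\eta_X}$ to do the same. Since $X$ and $\cosk{n+1}X$ have the same $0$-simplices, every basepoint $x \in X_{0}$ is accounted for, and because the homotopy groups entering the definition of an $n$-equivalence are computed on the realization (equivalently, on a fibrant replacement), this shows that $\eta_{X}$ is an $n$-equivalence.

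The step I expect to be most delicate is conceptual rather than computational: for a general $X$ that is not a Kan complex, the combinatorial homotopy groups are only homotopically meaningful after fibrant replacement, and $\cosk{n+1}$ does not commute with such a replacement. Routing the whole argument through $\reali{-}$ sidesteps this, since realization is insensitive to fibrancy and the two skeletal legs then induce isomorphisms on homotopy groups in all degrees $\le n$. The only nonformal input is this connectivity of the skeletal inclusion, which I would either cite or read off directly from the cellular structure of $\reali{X}$; everything else is the formal property of the coskeleton recorded above together with two-out-of-three.
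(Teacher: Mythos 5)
Your proof is correct, and it opens with precisely the observation that constitutes the paper's entire stated proof: $\eta_{X} \from X \to \cosk{n+1}X$ is a bijection on $k$-simplices for $0 \leq k \leq n+1$. The difference is in how you extract the conclusion from that observation. The paper leaves this step implicit in the simplicial case; its cubical counterpart (\cref{lem:unit-coskeleton}, via \cref{lem:n-bijective-maps}) carries it out purely combinatorially, by building fibrant replacements with the small object argument, checking that the induced map between them remains bijective in dimensions $\leq n+1$, and reading off homotopy groups from the combinatorial description available for Kan complexes. You instead factor both $X$ and $\cosk{n+1}X$ under their common $(n+1)$-skeleton, pass to geometric realizations, invoke the $(n+1)$-connectivity of attaching cells of dimension $\geq n+2$, and finish with two-out-of-three for maps inducing isomorphisms on $\pi_{k}$, $k \leq n$, at all basepoints. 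Both routes are sound. Yours buys a self-contained argument from classical CW theory, at the cost of relying on Milnor's comparison between the Goerss--Jardine simplicial homotopy groups (defined via fibrant replacement) and the homotopy groups of the realization --- a standard fact, and legitimate to cite here, since the paper's definition of $n$-equivalence is exactly in those terms. The combinatorial route, by contrast, never leaves the presheaf category; that is why the paper adopts it in the cubical setting, where the analogous comparison with realization is not free but is itself established separately (\cref{prop:comparison-homotopy-groups}).
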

\begin{proof}
    This follows from the observation that the map $\eta_{X} \from X \to \cosk{n+1}X$ is a bijection on the $k$-simplices for $0 \leq k \leq n+1$.  
\end{proof}

\begin{proposition}[\protect{\cite[Prop.~2.2]{Elvira-Donazar_Hernandez-Paricio_1995}}]\label{prop:transfer-n-equivalences-simplicial}
    A simplicial map $f \from X \to Y$ is an $n$-equivalence if and only if $\cosk{n+1}f \from \cosk{n+1}X \to \cosk{n+1}Y$ is an $n$-equivalence.
\end{proposition}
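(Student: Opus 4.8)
The plan is to exploit the naturality of the coskeletal unit $\eta$ together with the fact, just established in \cref{lem:unit-coskeleton-simplicial}, that each component $\eta_X \from X \to \cosk{n+1}X$ is an $n$-equivalence. First I would record the commutative naturality square
\[
\begin{tikzcd}
X \arrow[r,"f"] \arrow[d,"\eta_X"'] & Y \arrow[d,"\eta_Y"] \\
\cosk{n+1}X \arrow[r,"\cosk{n+1}f"'] & \cosk{n+1}Y,
\end{tikzcd}
\]
whose two vertical maps are $n$-equivalences by \cref{lem:unit-coskeleton-simplicial}. The entire argument then reduces to chasing this square with the two-out-of-three property, so that neither implication requires any direct manipulation of homotopy groups of the coskeleton.

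The key point I would use is that the class of $n$-equivalences satisfies two-out-of-three. This is immediate from \cref{thm:cisinski-model-simplicial}, since the $n$-equivalences are exactly the weak equivalences of the Cisinski model structure $\mathrm{L}_{n}\sSet$; invoking the model structure also sidesteps the basepoint bookkeeping that a hands-on verification of two-out-of-three from the definition in terms of $\pi_{0}$ and the $\pi_{k}$ would otherwise demand. Granting this, I would argue both directions. If $f$ is an $n$-equivalence, then the composite $\eta_Y \circ f = \cosk{n+1}f \circ \eta_X$ is an $n$-equivalence, being a composite of two such; since $\eta_X$ is an $n$-equivalence, two-out-of-three forces $\cosk{n+1}f$ to be one as well. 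Conversely, if $\cosk{n+1}f$ is an $n$-equivalence, then $\cosk{n+1}f \circ \eta_X = \eta_Y \circ f$ is an $n$-equivalence, and since $\eta_Y$ is an $n$-equivalence, two-out-of-three yields that $f$ is an $n$-equivalence.

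I expect essentially no obstacle here: once the preceding lemma is in hand, the statement is a formal consequence of naturality and two-out-of-three. The only point requiring a moment's care is the justification of two-out-of-three for the $n$-equivalences, which I would settle by appealing to \cref{thm:cisinski-model-simplicial} rather than re-deriving it by transporting homotopy groups along paths between basepoints.
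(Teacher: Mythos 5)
Your proposal is correct and is essentially identical to the paper's own proof: the same naturality square for the unit $\eta$, the same appeal to \cref{lem:unit-coskeleton-simplicial}, and the same two-out-of-three argument (the paper merely leaves the two directions implicit). Justifying two-out-of-three via \cref{thm:cisinski-model-simplicial} is also consistent with the paper's reasoning.
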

\begin{proof}
    Consider the naturality square:
    \[
    \begin{tikzcd}
        X \arrow[r,"\eta_{X}"] \arrow[d,"f"] & \cosk{n+1}X \arrow[d,"\cosk{n+1}f"] \\
        Y \arrow[r,"\eta_{Y}"] & \cosk{n+1}Y
    \end{tikzcd}
    \]
    By \cref{lem:unit-coskeleton-simplicial}, both the horizontal maps in the above diagram are $n$-equivalences.
    Since the class of $n$-equivalences satisfies the 2-out-of-3 property, we have the required result.
\end{proof}

Next, we characterize the transferred \emph{naive} $n$-fibrations in terms of their lifting properties.
We will first need the following lemma.

\begin{lemma}\label{lem:identities-simplicial}
    We have the following identities:
    \[
        \sk{n+1}( \partial\Delta^{k} \hookrightarrow \Delta^{k}) =
        \begin{cases}
            \partial\Delta^{k} \hookrightarrow \Delta^{k} & \text{if } k \leq n+1 \\
            \mathrm{id}_{\sk{n+1}\Delta^{k}} & \text{if } k \geq n+2
        \end{cases}
    \]
    and
    \[
        \sk{n+1}( \Lambda^{k}_{i} \hookrightarrow \Delta^{k}) =
        \begin{cases}
            \Lambda^{k}_{i} \hookrightarrow \Delta^{k} & \text{if } k \leq n+1 \\
            \Lambda^{n+2}_{i} \hookrightarrow \partial\Delta^{n+2} & \text{if } k = n+2 \\
            \mathrm{id}_{\sk{n+1}\Delta^{k}} & \text{if } k \geq n+3.
        \end{cases}
    \]
\end{lemma}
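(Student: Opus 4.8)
The plan is to compute everything directly from the explicit description of the skeleton functor: for a simplicial set $X$, recall that $\sk{n+1}X$ is the smallest sub-presheaf of $X$ containing every simplex of dimension at most $n+1$, equivalently the sub-presheaf spanned by the non-degenerate simplices of $X$ of dimension $\leq n+1$ together with their degeneracies. Since all the maps in sight are monomorphisms, and $\sk{n+1}$ carries a monomorphism $A \hookrightarrow B$ to the induced inclusion $\sk{n+1}A \hookrightarrow \sk{n+1}B$ of sub-presheaves of $B$, it suffices in each case to identify the two sub-presheaves and the map between them.

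First I would record two auxiliary facts about the simplex. The non-degenerate $m$-simplices of $\Delta^k$ are exactly the injective order-preserving maps $[m] \to [k]$, i.e. the $(m+1)$-element subsets of $\{0, \ldots, k\}$; consequently $\sk{m}\Delta^k = \Delta^k$ whenever $m \geq k$, while $\sk{k-1}\Delta^k = \partial\Delta^k$, since deleting the unique top-dimensional simplex yields precisely the boundary. I would also use the evident identity $\sk{m}\sk{p} = \sk{\min(m,p)}$. Together with $\partial\Delta^k = \sk{k-1}\Delta^k$ these immediately dispatch the boundary case: $\sk{n+1}\partial\Delta^k = \sk{\min(n+1,k-1)}\Delta^k$, which equals $\partial\Delta^k$ when $k \leq n+1$ (as $\min(n+1,k-1) = k-1$) and equals $\sk{n+1}\Delta^k$ when $k \geq n+2$ (as $\min(n+1,k-1) = n+1$); in the latter case the inclusion into $\sk{n+1}\Delta^k$ is the identity.

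For the horn I would use the corresponding combinatorial description: $\Lambda^k_i$ contains every proper face of $\Delta^k$ except the $i$th one, so its non-degenerate simplices are all the faces of dimension $\leq k-1$ other than $d_i$; in particular $\Lambda^k_i$ contains all of $\sk{k-2}\Delta^k$ and satisfies $\sk{k-1}\Lambda^k_i = \Lambda^k_i$. The three cases then read off directly. For $k \leq n+1$ we have $n+1 \geq k-1$, so $\sk{n+1}$ fixes both $\Lambda^k_i$ and $\Delta^k$. For $k = n+2$ we have $n+1 = k-1$, so $\sk{n+1}\Lambda^{n+2}_i = \Lambda^{n+2}_i$ is unchanged while the target becomes $\sk{n+1}\Delta^{n+2} = \sk{k-1}\Delta^k = \partial\Delta^{n+2}$. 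For $k \geq n+3$ we have $n+1 \leq k-2$, so $\sk{n+1}\Lambda^k_i$ and $\sk{n+1}\Delta^k$ are both equal to the sub-presheaf of $\Delta^k$ on the simplices of dimension $\leq n+1$, and the map is the identity.

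The argument is essentially bookkeeping of which non-degenerate simplices survive, so I do not expect a serious obstacle. The one point requiring genuine care is the middle horn case $k = n+2$: there the source horn $\Lambda^{n+2}_i$ is left untouched by $\sk{n+1}$ while the target $\Delta^{n+2}$ is cut down to its boundary, so source and target must be tracked separately rather than as a single inclusion, and one must verify that $\Lambda^{n+2}_i$ really does retain all its $(n+1)$-dimensional faces except $d_i$, so that no further collapse occurs and the resulting map is exactly $\Lambda^{n+2}_i \hookrightarrow \partial\Delta^{n+2}$.
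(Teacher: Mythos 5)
Your proof is correct and is essentially the argument the paper leaves implicit: its own proof is a one-line appeal to skeletal induction (the description of skeleta via non-degenerate simplices, citing Cisinski), and your sub-presheaf bookkeeping---the identities $\sk{m}\sk{p} = \sk{\min(m,p)}$ and $\sk{k-1}\Delta^{k} = \partial\Delta^{k}$, the observation that $\Lambda^{k}_{i} \supseteq \sk{k-2}\Delta^{k}$, and the separate tracking of source and target in the $k = n+2$ horn case---is exactly the verification being cited. No gaps.
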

\begin{proof}
    These identities can be verified using skeletal induction for simplicial sets (see, for example, \cite[Thm.~1.3.8]{cisinski-book}).
\end{proof}

\begin{proposition}\label{prop:transfer-naive-n-fibrations-simplicial}
    A simplicial map $f \from X \to Y$ is a transferred naive $n$-fibration if and only if it has the right lifting property with respect to the set
    \[
        J_{n}' = \left\{ \Lambda^{k}_{i} \hookrightarrow \Delta^{k} \ \mid \ 0 < k \leq n+1, \ 0 \leq i \leq k \right\} \cup \left\{ \Lambda^{n+2}_{i} \hookrightarrow \partial\Delta^{n+2} \ \mid \ 0 \leq i \leq n+2  \right\}.
    \]
\end{proposition}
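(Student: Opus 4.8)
The plan is to reduce the statement entirely to the adjunction $\sk{n+1} \adj \cosk{n+1}$ together with the standard fact that lifting properties transfer across an adjunction. Recall that by definition $f$ is a transferred naive $n$-fibration \iff{} $\cosk{n+1}f$ is a naive $n$-fibration, i.e.\ \iff{} $\cosk{n+1}f$ has the right lifting property with respect to the set $J_n$. The key observation I would invoke is that for an adjunction $L \adj R$, a map $p$ has the right lifting property with respect to $Lg$ \iff{} $Rp$ has the right lifting property with respect to $g$; this is just the transpose of a lifting square across the adjunction isomorphism. Applying this with $L = \sk{n+1}$, $R = \cosk{n+1}$, and $p = f$, I conclude that $\cosk{n+1}f$ has the right lifting property with respect to $J_n$ \iff{} $f$ has the right lifting property with respect to the set $\sk{n+1}(J_n) = \set{\sk{n+1}g}{g \in J_n}$.

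The next step is to compute $\sk{n+1}(J_n)$ explicitly using \cref{lem:identities-simplicial}. Running through the two families of generators of $J_n$: the horn inclusions $\Lambda^k_i \ito \Delta^k$ with $0 < k \leq n+1$ are fixed by $\sk{n+1}$ and reappear verbatim; the horn inclusions with $k = n+2$ become $\Lambda^{n+2}_i \ito \partial\Delta^{n+2}$; the horn inclusions with $k \geq n+3$ become identities on $\sk{n+1}\Delta^k$. For the boundary family $\partial\Delta^k \ito \Delta^k$ with $k \geq n+2$, every one of these is sent by $\sk{n+1}$ to an identity. Thus $\sk{n+1}(J_n)$ consists precisely of the maps appearing in $J_n'$ together with a collection of identity maps.

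Finally, I would note that identity maps (and more generally isomorphisms) are right-lifted against by every map, so they may be discarded from a generating set without changing the associated right lifting property. Hence $f$ has the right lifting property with respect to $\sk{n+1}(J_n)$ \iff{} it has the right lifting property with respect to $J_n'$, which completes the proof. The argument is essentially bookkeeping assembled from the adjunction and \cref{lem:identities-simplicial}; the only point requiring genuine care is getting the direction of the adjunction transpose correct and verifying that exactly the $k \geq n+2$ boundary inclusions and the $k \geq n+3$ horn inclusions collapse to identities, which is precisely what \cref{lem:identities-simplicial} supplies.
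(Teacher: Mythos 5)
Your proposal is correct and follows essentially the same route as the paper: transpose lifting problems across the adjunction $\sk{n+1} \dashv \cosk{n+1}$ so that the right lifting property of $\cosk{n+1}f$ against $J_n$ becomes the right lifting property of $f$ against $\sk{n+1}(J_n)$, then compute $\sk{n+1}(J_n)$ via \cref{lem:identities-simplicial}. The only difference is that you make explicit the (correct) bookkeeping step that the identity maps arising from the $k \geq n+2$ boundary inclusions and $k \geq n+3$ horn inclusions can be discarded, which the paper leaves implicit.
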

\begin{proof}
    Observe that given two simplicial maps $i \from A \to B$ and $f \from X \to Y$, $f$ has the right lifting property with respect to $\sk{n+1}i$ if and only if $\cosk{n+1}f$ has the right lifting property with respect to $i$.
    The required result then follows from \cref{lem:identities-simplicial}.
\end{proof}

We also have the following characterization of transferred $n$-cofibrations.

\begin{proposition}\label{prop:transfer-n-cofibrations-simplicial}
    A simplicial map $f \from X \to Y$ is both a transferred $n$-fibration and an $n$-equivalence if and only if it has the right lifting property with respect to the set
    \[
        I_{n}' = \left\{ \partial\Delta^{k} \hookrightarrow \Delta^{k} \ \mid \ 0 \leq k \leq n+1 \right\}.
    \]
    Equivalently, a simplicial map $i \from A \to B$ is a transferred $n$-cofibration if and only if it belongs to the saturation of $I_{n}'$.
\end{proposition}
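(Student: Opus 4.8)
The plan is to reduce the conjunction of the two conditions on $f$ to a single lifting condition on $\cosk{n+1}f$, and then transpose that condition back across the skeleton--coskeleton adjunction. By definition, $f$ is a transferred $n$-fibration exactly when $\cosk{n+1}f$ is an $n$-fibration, and by \cref{prop:transfer-n-equivalences-simplicial}, $f$ is an $n$-equivalence exactly when $\cosk{n+1}f$ is an $n$-equivalence. Hence $f$ is simultaneously a transferred $n$-fibration and an $n$-equivalence if and only if $\cosk{n+1}f$ is both an $n$-fibration and an $n$-equivalence, i.e.\ a trivial fibration in the Cisinski model structure $\mathrm{L}_{n}\sSet$.

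Next I would identify the trivial fibrations of $\mathrm{L}_{n}\sSet$. Since this model structure has the monomorphisms as its cofibrations (\cref{thm:cisinski-model-simplicial}), its trivial fibrations are precisely the maps with the right lifting property against the generating monomorphisms, namely the boundary inclusions $\partial\Delta^{k} \hookrightarrow \Delta^{k}$ for all $k \geq 0$. I would then transpose across the adjunction $\sk{n+1} \adj \cosk{n+1}$: as recorded in the proof of \cref{prop:transfer-naive-n-fibrations-simplicial}, $\cosk{n+1}f$ has the right lifting property against a map $i$ if and only if $f$ has the right lifting property against $\sk{n+1}i$. Applying this to each boundary inclusion and invoking \cref{lem:identities-simplicial}, the maps $\sk{n+1}(\partial\Delta^{k} \hookrightarrow \Delta^{k})$ are the boundary inclusions themselves for $k \leq n+1$ and identity maps for $k \geq n+2$. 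Since every map lifts against an identity, the latter constraints are vacuous, and the condition collapses to the right lifting property against exactly $I_{n}'$. This establishes the first claim.

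For the ``equivalently'' clause I would appeal to the standard theory of weakly saturated classes. By definition, a transferred $n$-cofibration is a map with the left lifting property against every map that is both a transferred $n$-fibration and an $n$-equivalence; by the first part, this latter class is precisely the class of maps with the right lifting property against $I_{n}'$. Thus the transferred $n$-cofibrations are exactly the maps with the left lifting property against every map that has the right lifting property against $I_{n}'$. Since $I_{n}'$ is a set, the small object argument together with the retract argument identifies this class with the saturation of $I_{n}'$, yielding the second claim.

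The argument is essentially a clean chain of equivalences, so I do not anticipate a serious obstacle; the only points requiring care are keeping the distinction between $n$-fibrations and \emph{naive} $n$-fibrations straight --- here we genuinely use $n$-fibrations and the full trivial-fibration characterization by boundary inclusions, rather than the horn-type set $J_{n}'$ --- and correctly orienting the adjunction transpose so that the left adjoint $\sk{n+1}$ is the functor applied to the generating cofibrations.
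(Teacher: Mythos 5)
Your proof is correct and takes essentially the same route as the paper: both reduce the conjunction to $\cosk{n+1}f$ being a trivial fibration in $\mathrm{L}_{n}\sSet$ via \cref{prop:transfer-n-equivalences-simplicial}, identify those trivial fibrations with the maps having the right lifting property against all boundary inclusions (by \cref{thm:cisinski-model-simplicial}), and then transpose across $\sk{n+1} \adj \cosk{n+1}$ using \cref{lem:identities-simplicial} to collapse the condition to $I_{n}'$. Your explicit small-object/retract argument for the saturation clause spells out a step the paper leaves implicit, but it is the standard and intended justification.
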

\begin{proof}
    By \cref{prop:transfer-n-equivalences-simplicial}, a map $f \from X \to Y$ is both a transferred $n$-fibration and an $n$-equivalence if and only if
    $\cosk{n+1}f \from \cosk{n+1}X \to \cosk{n+1}Y$ is both an $n$-fibration and an $n$-equivalence.
    By \cref{thm:cisinski-model-simplicial}, we know that $\cosk{n+1}f \from \cosk{n+1}X \to \cosk{n+1}Y$ is both an $n$-fibration and an $n$-equivalence
    if and only if it has the right lifting property with respect to the set
    \[
        I = \left\{ \partial\Delta^{k} \hookrightarrow \Delta^{k} \ \mid \ k \geq 0 \right\}
    \]
    of all boundary inclusions, and hence with respect to every monomorphism.
    Since $\cosk{n+1}f$ has the right lifting property with respect to a simplicial map $i \from A \to B$ if and only if $f$ has the right lifting property with respect to $\sk{n+1}i$, the required result then follows from \cref{lem:identities-simplicial}.
\end{proof}

We will need the following couple of lemmas in order to verify the acyclicity condition in \cite[Cor.~3.3.4]{hess-kedziorek-riehl-shipley}.

\begin{lemma}\label{lem:counit-skeleton-simplicial}
    For every simplicial set $X$, the canonical map $\varepsilon_{X} \from \sk{n+1}X \to X$ is $n$-anodyne.
\end{lemma}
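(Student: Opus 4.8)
The plan is to exploit the standard skeletal filtration of $X$ and to observe that the canonical inclusion $\varepsilon_{X} \from \sk{n+1}X \to X$ attaches only nondegenerate simplices of dimension at least $n+2$, each attachment being a pushout of a boundary inclusion drawn from the second family of $J_{n}$. Concretely, I would first recall the skeletal induction already invoked in \cref{lem:identities-simplicial} (see \cite[Thm.~1.3.8]{cisinski-book}): for each $k \geq 1$ the $k$-skeleton sits in a pushout square
\[
\begin{tikzcd}
\coprod_{x} \partial\Delta^{k} \arrow[r] \arrow[d] & \sk{k-1}X \arrow[d] \\
\coprod_{x} \Delta^{k} \arrow[r] & \sk{k}X
\end{tikzcd}
\]
indexed by the nondegenerate $k$-simplices $x$ of $X$, and moreover $X = \colim_{k} \sk{k}X$. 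Consequently $\varepsilon_{X}$ is the transfinite composite of the inclusions $\sk{k}X \hookrightarrow \sk{k+1}X$ ranging over all $k \geq n+1$.

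Next I would observe that each such inclusion $\sk{k}X \hookrightarrow \sk{k+1}X$ with $k \geq n+1$ is, by the square above, a cobase change of a coproduct of boundary inclusions $\partial\Delta^{k+1} \hookrightarrow \Delta^{k+1}$, and that since $k+1 \geq n+2$ each of these belongs to $J_{n}$. Because the saturation of $J_{n}$ is closed under coproducts, pushouts, and transfinite composition, every stage inclusion is $n$-anodyne, and therefore so is their composite $\varepsilon_{X}$.

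I do not expect a genuine obstacle here: the statement is a direct consequence of the skeletal filtration, and it is worth remarking that only the boundary-inclusion family of $J_{n}$ is used---the horn inclusions play no role in this argument. The only points demanding minor care are the bookkeeping of the pushouts over nondegenerate simplices and confirming that the transfinite composite of the stage inclusions reassembles $X$ above dimension $n+1$, both of which follow immediately from the cited skeletal induction.
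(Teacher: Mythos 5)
Your proposal is correct and follows essentially the same route as the paper: the paper also writes $\varepsilon_{X}$ as the transfinite composite $\sk{n+1}X \to \sk{n+2}X \to \sk{n+3}X \to \cdots$ and cites \cite[Thm.~1.3.8]{cisinski-book} for the fact that each stage inclusion is $n$-anodyne. The only difference is presentational---you unpack that citation into the explicit pushout squares along coproducts of boundary inclusions $\partial\Delta^{k+1} \hookrightarrow \Delta^{k+1}$ with $k+1 \geq n+2$, which is exactly the content the paper delegates to the reference.
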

\begin{proof}
    The map $\varepsilon_{X} \from \sk{n+1}X \to X$ is a transfinite composite of the following sequence:
    \[
    \begin{tikzcd}
        \sk{n+1}X \arrow[r] & \sk{n+2}X \arrow[r] & \sk{n+3}X \arrow[r] & \cdots
    \end{tikzcd}
    \]
    and each map in this sequence is $n$-anodyne (see \cite[Thm.~1.3.8]{cisinski-book}).
\end{proof}

\begin{lemma}\label{lem:acyclicity-condition-simplicial}
    Let $i \from A \to B$ be a simplicial map with left lifting property with respect to all transferred $n$-fibrations.
    Then, $i$ is an $n$-equivalence.
\end{lemma}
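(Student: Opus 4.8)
The plan is to identify the class of maps with the left lifting property against all transferred $n$-fibrations as the saturation of a set of acyclic cofibrations of the Cisinski model structure $\mathrm{L}_{n}\sSet$, and then to observe that any such saturation consists of $n$-equivalences. To set this up, I would first fix a set $\mathcal{J}$ of generating acyclic cofibrations for the cofibrantly generated model structure $\mathrm{L}_{n}\sSet$ of \cref{thm:cisinski-model-simplicial}. Using the same adjunction observation recorded in the proof of \cref{prop:transfer-naive-n-fibrations-simplicial} --- that $f$ has the right lifting property against $\sk{n+1}g$ if and only if $\cosk{n+1}f$ has the right lifting property against $g$ --- I would show that the transferred $n$-fibrations are exactly the maps having the right lifting property against $\sk{n+1}\mathcal{J}$. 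Indeed, $f$ is a transferred $n$-fibration precisely when $\cosk{n+1}f$ is an $n$-fibration, i.e.\ has the right lifting property against $\mathcal{J}$, and this transposes to the right lifting property of $f$ against $\sk{n+1}\mathcal{J}$. Consequently, any $i$ with the left lifting property against all transferred $n$-fibrations has the left lifting property against every map with the right lifting property against $\sk{n+1}\mathcal{J}$, and hence lies in the saturation $\sat{(\sk{n+1}\mathcal{J})}$ by the small object argument.

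Next I would show that $\sk{n+1}$ sends acyclic cofibrations of $\mathrm{L}_{n}\sSet$ to acyclic cofibrations. Each $g \from C \to D$ in $\mathcal{J}$ is a monomorphism and an $n$-equivalence; since $\sk{n+1}$ preserves monomorphisms, $\sk{n+1}g$ is again a monomorphism. To see that it is an $n$-equivalence, I would compare it to $g$ through the naturality square of the counit $\varepsilon \from \sk{n+1} \to \mathrm{id}$: by \cref{lem:counit-skeleton-simplicial} both $\varepsilon_{C}$ and $\varepsilon_{D}$ are $n$-anodyne, and in particular $n$-equivalences, so the 2-out-of-3 property forces $\sk{n+1}g$ to be an $n$-equivalence as well. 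Thus every map of $\sk{n+1}\mathcal{J}$ is an acyclic cofibration of $\mathrm{L}_{n}\sSet$.

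Finally, since the acyclic cofibrations of a model structure form a saturated class, the saturation $\sat{(\sk{n+1}\mathcal{J})}$ is contained in the acyclic cofibrations of $\mathrm{L}_{n}\sSet$, and in particular in the $n$-equivalences. As $i$ lies in this saturation, it is an $n$-equivalence, as desired.

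The step I expect to require the most care is the identification of the transferred $n$-fibrations with the maps having the right lifting property against $\sk{n+1}\mathcal{J}$. It is tempting to use the explicit set $J_{n}'$ of \cref{prop:transfer-naive-n-fibrations-simplicial} in place of $\sk{n+1}\mathcal{J}$, but this fails: the transferred $n$-fibrations form a strictly smaller class than the transferred naive $n$-fibrations, because $J_{n}$ does not generate the acyclic cofibrations of the Bousfield localization $\mathrm{L}_{n}\sSet$. One is therefore forced to work with an abstract set $\mathcal{J}$ of generating acyclic cofibrations, and the genuine content of the argument then becomes the fact that $\sk{n+1}$ preserves acyclic cofibrations, for which \cref{lem:counit-skeleton-simplicial} is the crucial input.
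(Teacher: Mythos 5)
Your proof is correct and takes essentially the same route as the paper's: the heart of both arguments is that $\sk{n+1}$ carries acyclic cofibrations of $\mathrm{L}_{n}\sSet$ to acyclic cofibrations, proved via the counit naturality square, \cref{lem:counit-skeleton-simplicial}, and 2-out-of-3, after which one concludes using saturation of the acyclic cofibrations. The only difference is packaging: the paper runs the argument for the entire class of monomorphisms that are $n$-equivalences (so that, by the adjunction transposition, every $n$-fibration is a transferred $n$-fibration, and lifting against the latter class implies lifting against the former), whereas you run it for a generating set $\mathcal{J}$ and invoke the small object argument to place $i$ in the saturation of $\sk{n+1}\mathcal{J}$; both are valid.

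One caveat about your closing remark: your caution against substituting the explicit set $J_{n}'$ is methodologically sound (a priori one only knows that transferred $n$-fibrations are contained in the transferred naive $n$-fibrations, which gives the containment of lifting classes in the wrong direction), but your claim that this containment is \emph{strict} is false. As the paper observes in the remark following \cref{thm:transferred-model-simplicial}, once the transferred model structure is established, the transferred $n$-fibrations and the transferred naive $n$-fibrations coincide, as a consequence of comparison with the Elvira--Hernandez model structure; the genuine discrepancy between naive and non-naive fibrations occurs only in the non-transferred Cisinski model structure.
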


Before proving \cref{lem:acyclicity-condition-simplicial}, we recall a standard piece of model-categorical notation (cf.~\cite{cisinski-book}).
Given any class $C$ of maps in a category, $\mathscr{L}(C)$ denotes the class of all maps having the left lifting property with respect to every map in $C$ and $\mathscr{R}(C)$ denotes the class of all maps having the right lifting property with respect to every map in $C$.

\begin{proof}
    From the definitions, it follows that a simplicial map $f \from X \to Y$ is a transferred $n$-fibration if and only it has the right lifting property against $\sk{n+1}j$ for every map $j \from C \to D$ that is both a monomorphism and an $n$-equivalence.
    Consider the naturality square:
    \[
    \begin{tikzcd}
        \sk{n+1}C \arrow[r,"\varepsilon_{C}"] \arrow[d,"\sk{n+1}j"'] & C \arrow[d,"j"]  \\
        \sk{n+1}D \arrow[r,"\varepsilon_{D}"] & D
    \end{tikzcd}
    \]
    By \cref{lem:counit-skeleton-simplicial}, both the horizontal maps in the above diagram are $n$-anodyne, and hence, both monomorphisms and $n$-equivalences (see \cite[Prop.~2.4.25]{cisinski-book}).
    Thus, if $j$ is a monomorphism and an $n$-equivalence, so is $\sk{n+1}j$.
    Since the class of maps that are both monomorphisms and $n$-equivalences is saturated (see \cref{thm:cisinski-model-simplicial}), it follows that any map $i \from A \to B$ that has the left lifting property with respect to all transferred $n$-fibrations is both a monomorphism and an $n$-equivalence.
    In other words, we have the following:
    \begin{align*}
        \mathcal{L}{(\text{transferred }n\text{-fibrations})} &= \mathcal{L}(\mathcal{R}(\mathrm{sk}_{n+1}(\text{monos} \cap n\text{-equivalences}))) \\
        &\subseteq \mathcal{L}(\mathcal{R}(\text{monos} \cap n\text{-equivalences})) \\
        &= \text{monos} \cap n\text{-equivalences} \\
        &\subseteq n\text{-equivalences}. \qedhere
    \end{align*}
\end{proof}

\begin{proof}[Proof of \cref{thm:transferred-model-simplicial}]
    \cref{lem:acyclicity-condition-simplicial} allows us to apply \cite[Cor.~3.3.4]{hess-kedziorek-riehl-shipley} to right-transfer the Cisinski model category structure for $n$-types of simplicial sets (see \cref{thm:cisinski-model-simplicial}) along the adjunction $\sk{n+1} \dashv \cosk{n+1}$.
    This proves the existence of the transferred model category structure and also tells us that this adjunction is Quillen.
    By \cref{prop:transfer-n-equivalences-simplicial}, we know that $\cosk{n+1}$ preserves and reflects $n$-equivalences.
    Thus, it suffices to prove that for every simplicial set $X$, the unit $X \to \cosk{n+1}\sk{n+1}X$ of the adjunction $\sk{n+1} \dashv \cosk{n+1}$ is an $n$-equivalence.
    Observe that the functor $i_{n+1}^{*} \circ {i_{n+1}}_{*} \from \sSet_{\leq n+1} \to \sSet_{\leq n+1}$ equals the identity functor on $\sSet_{\leq n+1}$.
    It follows that
    \[
        \cosk{n+1} \circ \sk{n+1} = {i_{n+1}}_{!} \circ i_{n+1}^{*} \circ {i_{n+1}}_{*} \circ i_{n+1}^{*} = {i_{n+1}}_{!} \circ i_{n+1}^{*} = \cosk{n+1}.
    \]
    The required result then follows from \cref{lem:unit-coskeleton-simplicial}.
\end{proof}

\begin{remark}
    \label{remark:transferring-makes-naivete-redundant-simplicial}
    It is perhaps not obvious that the model category structure constructed in \cref{thm:transferred-model-simplicial} coincides with the model category structure of \cite[Thm.~2.3]{Elvira-Donazar_Hernandez-Paricio_1995}.
    Let us now justify this claim.
    The weak equivalences in both model category structures clearly coincide -- these are given in both instances by the $n$-equivalences.
    The cofibrations in both model category structures coincide by virtue of \cref{prop:transfer-n-cofibrations-simplicial} and \cite[Thm.~2.1]{Elvira-Donazar_Hernandez-Paricio_1995} -- these are given in both instances by the maps belonging to the saturation of the set
    \[
        I_{n}' = \left\{ \partial\Delta^{k} \hookrightarrow \Delta^{k} \ \mid \ 0 \leq k \leq n+1 \right\}.
    \]
    This is sufficient to prove that the two model category structures coincide.
    However, while the fibrations in \cref{thm:transferred-model-simplicial} are the transferred $n$-fibrations, the fibrations in \cite[Thm.~2.3]{Elvira-Donazar_Hernandez-Paricio_1995} are the transferred naive $n$-fibrations (see \cref{prop:transfer-naive-n-fibrations-simplicial}).
    Thus, as a consequence of \cite[Thm.~2.3]{Elvira-Donazar_Hernandez-Paricio_1995}, we see that every transferred naive $n$-fibration is a transferred $n$-fibration, and vice-versa.
    This is in contrast with the Cisinski model category structure for $n$-types of simplicial sets, where there exist naive $n$-fibrations that are not $n$-fibrations (see \cite[Ex.~2.1.6]{hirschhorn}).
\end{remark}

\section{Homotopy $n$-types of cubical sets} \label{sec:cubical}

\subsection{Background on cubical sets}

We very briefly recall some background on cubical sets, the Grothendieck model category structure, and cubical homotopy groups.
For a more detailed introduction, we direct the reader to \cite{carranza-kapulkin:homotopy-groups}.
Other references on this topic include \cite{cisinski:presheaves}, \cite{cisinski2014univalent}, \cite{jardine:categorical-homotopy}, \cite{kapulkin-voevodsky}, and \cite{doherty-kapulkin-lindsey-sattler}.

Let $\boxcat$ denote the \emph{cube category}.
Its objects are the posets $[1]^{n} = \left\{0 \leq 1\right\}^{n}$ for $n \in \mathbb{N}$, and its morphisms are generated (inside the category of posets) under composition by the face maps, the degeneracy maps, and connections of both kinds (positive and negative).
A \emph{cubical set} is a presheaf over $\boxcat$.
We write $\cSet$ for the presheaf category $\Set^{\boxcat^{\op}}$.
The \emph{combinatorial $n$-cube} $\cube{n}$ is given by the representable presheaf $\boxcat(-,[1]^{n})$, its \emph{boundary} is denoted $\bdcube{n}$, and its \emph{$(i,\epsilon)$-open box} is denoted $\obox{n}{i,\epsilon}$.

Consider the set of all open box inclusions:
\[
    J = \left\{ \obox{n}{i,\epsilon} \hookrightarrow \cube{n} \ \mid \ n > 0, \ 1 \leq i \leq n, \ \epsilon = 0,1 \right\}.
\]
A cubical map $f \from X \to Y$ is \emph{anodyne} if it belongs to the saturation of $J$.
A cubical map $f \from X \to Y$ is a \emph{Kan fibration} if it has the right lifting property with respect to $J$, and hence with respect to every anodyne map.
A cubical set $X$ is a \emph{Kan complex} if the unique map $X \to \cube{0}$ is a Kan fibration.
We write $\mathsf{Kan}$ for the full subcategory of $\cSet$ on Kan complexes.

Let $\otimes \from \boxcat \times \boxcat \to \boxcat$ denote the functor given by the assignment $([1]^{m},[1]^{n}) \mapsto [1]^{m+n}$.
Post-composing it with the Yoneda embedding $\boxcat \hookrightarrow \cSet$ and taking its left Kan extension along $\boxcat \times \boxcat \hookrightarrow \cSet \times \cSet$, we obtain the \emph{geometric product} of cubical sets
\[
\begin{tikzcd}
    \boxcat \times \boxcat \arrow[r,"\otimes"] \arrow[d,hook] & \boxcat \arrow[r,hook] & \cSet \\
    \cSet \times \cSet \arrow[urr,"\otimes"'] &&
\end{tikzcd}
\]
This defines a monoidal structure on $\cSet$.

Given two cubical maps $f, g \from X \to Y$, an \emph{elementary homotopy} from $f$ to $g$ is a map $H \from X \otimes \cube{1} \to Y$ such that the following diagram commutes:
\[
\begin{tikzcd}
    X \otimes \cube{0} \arrow[d,"\partial_{1,0}"'] \arrow[dr,"f"] & \\
    X \otimes \cube{1} \arrow[r,"H"] & Y \\
    X \otimes \cube{0} \arrow[u,"\partial_{1,1}"] \arrow[ur,"g"'] 
\end{tikzcd}
\]
Given two cubical sets $X$ and $Y$, let $[X,Y]$ denote the quotient of the hom-set $\cSet(X,Y)$ by the equivalence relation generated by elementary homotopies.
Two cubical maps $f, g \from X \to Y$ are \emph{homotopic} if they have the same equivalence class in $[X,Y]$.
A cubical map $f \from X \to Y$ is a \emph{homotopy equivalence} if there exists a cubical map $g \from Y \to X$ such that $gf$ is homotopic to $\text{id}_{X}$ and $fg$ is homotopic to $\text{id}_{Y}$.

A cubical map $f \from X \to Y$ is a \emph{weak homotopy equivalence} if, for every Kan complex $Z$, the induced map
\[
    f^{*} \from [Y,Z] \to [X,Z]
\]
is a bijection.

\begin{theorem}[Cisinski, see \protect{\cite[Thm.~1.34]{doherty-kapulkin-lindsey-sattler}}]\label{thm:grothendieck}
    The category $\cSet$ admits a cofibrantly generated model category structure where
    \begin{itemize}
        \item the cofibrations are the monomorphisms,
        \item the fibrations are the Kan fibrations, and
        \item the weak equivalences are the weak homotopy equivalences.
    \end{itemize}
    We refer to this model category structure as the Grothendieck model category structure.
    \qed
\end{theorem}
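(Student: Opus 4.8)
The plan is to obtain this model structure as an instance of Cisinski's theory of homotopical structures on presheaf toposes \cite{cisinski-book}. Since $\cSet = \Set^{\boxcat^{\op}}$ is a presheaf topos and $\boxcat$ is a skeletal (generalized Reedy) category, the monomorphisms form an admissible class of cofibrations: they are exactly the saturation of the set of boundary inclusions $\{ \bdcube{n} \hookrightarrow \cube{n} \mid n \geq 0 \}$. The task then reduces to supplying the remaining homotopical data---a cylinder functor and a set of generating anodyne maps---and verifying Cisinski's axioms so that his fundamental theorem yields the model structure with the asserted cofibrations, fibrations, and weak equivalences.

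First, I would take the cylinder functor to be $\cube{1} \otimes (-)$, equipped with the endpoint inclusions $\partial_{1,0}, \partial_{1,1} \from X \cong \cube{0} \otimes X \to \cube{1} \otimes X$ and the projection $\cube{1} \otimes X \to X$. Because $\otimes$ preserves colimits in each variable and preserves monomorphisms, and because the connections on $\cube{1}$ endow it with the structure of an interval object with contractions, one checks that this defines an exact cylinder in Cisinski's sense; in particular the endpoint inclusions are monomorphisms and interact correctly with the cofibration generators under the pushout-product $\pp$.

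Next, I would take $J$ (the open box inclusions) as the generating set of anodyne maps and carry out the combinatorial verification that the resulting class of anodyne maps is the saturation of $J$ alone. Concretely, this requires showing that the pushout-products $\partial_{1,\epsilon} \pp (\bdcube{n} \hookrightarrow \cube{n})$ of the cylinder endpoints with the cofibration generators again lie in the saturation of $J$, and dually that the saturation of $J$ is stable under the pushout-product with the endpoint inclusions. This step---where the connection structure on the cube category is indispensable---is the combinatorial heart of the argument, and it is exactly what separates the cubical case from the simplicial one; I would defer to the detailed treatment in the cited references. Granting it, Cisinski's fundamental theorem produces a cofibrantly generated model structure in which the cofibrations are the monomorphisms, the fibrant objects are the cubical sets with the right lifting property against $J$, and---since the trivial cofibrations coincide with the saturation of $J$---the fibrations are precisely the maps with the right lifting property against $J$, i.e. the Kan fibrations.

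Finally, I would identify the weak equivalences. In the Cisinski model structure the weak equivalences admit an intrinsic characterization through the homotopy relation on maps into fibrant objects, and this homotopy relation is exactly the one generated by the elementary homotopies $H \from X \otimes \cube{1} \to Y$ recalled above. It follows that a cubical map $f \from X \to Y$ is a weak equivalence if and only if $f^{*} \from [Y,Z] \to [X,Z]$ is a bijection for every Kan complex $Z$, which is the stated definition of weak homotopy equivalence. The main obstacle throughout is the pushout-product bookkeeping for the non-cartesian geometric product $\otimes$; verifying that the anodyne maps are generated by $J$ alone, rather than by a larger set that also includes the cylinder pushout-products, is the delicate point that makes the connections genuinely necessary.
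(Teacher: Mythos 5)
Your overall strategy is the standard one, and it is essentially how the result is established in the sources the paper cites --- note that the paper itself gives no proof of this statement, quoting it as Cisinski's theorem via \cite[Thm.~1.34]{doherty-kapulkin-lindsey-sattler}. The construction there proceeds as you outline: monomorphisms generated by the boundary inclusions, the exact cylinder $\cube{1} \otimes (-)$, and the combinatorial verification (where connections are indispensable) that the saturation of the set $J$ of open box inclusions is already a class of anodyne extensions in Cisinski's sense. Up to that point your sketch is sound, and it correctly yields a cofibrantly generated model structure whose cofibrations are the monomorphisms, whose fibrant objects are the Kan complexes, and whose weak equivalences are the maps $f$ such that $f^{*} \from [Y,Z] \to [X,Z]$ is a bijection for every Kan complex $Z$.

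The genuine gap is in your last step, where you identify the fibrations. You write that ``since the trivial cofibrations coincide with the saturation of $J$,'' the fibrations are precisely the Kan fibrations. That coincidence is not part of Cisinski's fundamental theorem, and it does not follow from the pushout-product verification you describe. The general machinery gives only the inclusions: anodyne maps $\subseteq$ trivial cofibrations (hence fibrations $\subseteq$ Kan fibrations), together with the statement that a map \emph{between fibrant objects} is a fibration if and only if it is a Kan fibration --- exactly the weaker form that appears in \cref{thm:cisinski-model-simplicial} and \cref{thm:cisinski-model-cubical} of this paper, where fibrant objects rather than fibrations are specified. The assertion that \emph{every} trivial cofibration lies in the saturation of $J$, equivalently that every Kan fibration has the right lifting property against all trivial cofibrations, is precisely the hard content of the theorem. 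It is the cubical analogue of Quillen's theorem that the Kan fibrations are the fibrations of the Kan--Quillen model structure on $\sSet$, which requires minimal fibrations (or Kan's $\mathrm{Ex}^{\infty}$) and is not a formal consequence of the small object argument. For cubical sets with connections this identification is a separate theorem of Cisinski (cited in this paper as \cite{cisinski2014univalent}), proved by a dedicated argument; as written, your step is circular, since the claimed equality of the trivial cofibrations with the saturation of $J$ is a restatement of the conclusion you are deriving from it. To repair the proof you must either invoke that result explicitly or supply an argument of that kind.
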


Let $T \from \boxcat \to \sSet$ denote the functor given by the assignment $[1]^{n} \mapsto (\simp{1})^{n}$.
Taking its left Kan extension along the Yoneda embedding $\boxcat \hookrightarrow \cSet$, we obtain the \emph{triangulation functor} $T \from \cSet \to \sSet$ and its right adjoint $U \from \sSet \to \cSet$ given by $(UX)_{n} = \sSet((\simp{1})^{n},X)$.
\[
\begin{tikzcd}[row sep = large, column sep = large]
    \boxcat \arrow[r,"{[1]^{n} \mapsto (\simp{1})^{n}}"] \arrow[d,hook] & \sSet \arrow[dl,"U",shift left = 2ex] \\
    \cSet \arrow[ur,"T"] &
\end{tikzcd}
\]

\begin{theorem}[\protect{\cite[Thm.~6.26]{doherty-kapulkin-lindsey-sattler}}]\label{thm:triangulation}
    The adjunction $T \dashv U$ is a Quillen equivalence between the Grothendieck model category structure on $\cSet$ and the Quillen model category structure on $\sSet$.
    \qed
\end{theorem}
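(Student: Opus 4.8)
The plan is to verify that $T \dashv U$ is a Quillen adjunction and then that its total derived functors are mutually inverse equivalences of homotopy categories. Since every object of $\cSet$ is cofibrant (cofibrations being the monomorphisms in both structures), the entire burden falls on $T$. Because $T$ is left Quillen, to conclude that $T \dashv U$ is a Quillen equivalence it suffices to check two things: that $T$ reflects weak equivalences between cofibrant objects, and that the derived counit $\epsilon_{X} \from TUX \to X$ is a weak equivalence for every fibrant (i.e.\ Kan) simplicial set $X$. I would carry this out in three steps.

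First I would show $T$ is left Quillen. As $T$ is cocontinuous, being a left Kan extension, and the Grothendieck model structure of \cref{thm:grothendieck} is cofibrantly generated by the boundary inclusions $\bdcube{k} \ito \cube{k}$ and the open-box inclusions $\obox{k}{i,\epsilon} \ito \cube{k}$, it is enough to verify that $T$ sends these generators to cofibrations and trivial cofibrations of $\sSet$ respectively. Since $T\cube{k} = (\simp{1})^{k}$ and $T$ preserves the relevant colimits, it carries $\bdcube{k} \ito \cube{k}$ to the boundary inclusion $\partial\big((\simp{1})^{k}\big) \ito (\simp{1})^{k}$, a monomorphism, and it carries each open box to the inclusion of the corresponding union of facets of the prism $(\simp{1})^{k}$. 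The combinatorial heart of this step is to check that this latter inclusion is anodyne; I would establish it by an explicit anodyne decomposition (equivalently, a collapse of the triangulated prism onto its box, which for $k=1$ is already the horn inclusion $\simp{0} \ito \simp{1}$).

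Next I would show that $T$ both preserves and reflects weak equivalences. The cleanest route is through geometric realization: the composite $\sreali{T(\uvar)}$ is naturally isomorphic to the cubical realization $\creali{\uvar} \from \cSet \to \Top$, since both functors are cocontinuous and agree on representables, each sending $\cube{k}$ to the topological cube $[0,1]^{k}$. Because a cubical (resp.\ simplicial) map is a weak homotopy equivalence exactly when its realization is a weak homotopy equivalence of spaces, this natural isomorphism shows at once that $T$ is conservative on weak equivalences, and, together with Ken Brown's lemma (all objects being cofibrant), that it preserves them. This supplies the reflection hypothesis needed for the Quillen-equivalence criterion.

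The remaining and hardest step is the derived counit. Applying the conservative functor $\sreali{\uvar}$, showing $TUX \to X$ is a weak equivalence reduces to proving that $\sreali{TUX} \to \sreali{X}$ is a weak homotopy equivalence of spaces for every Kan complex $X$. Here $UX$ is the cubical set $[1]^{k} \mapsto \sSet\big((\simp{1})^{k}, X\big)$, and the content is that passing to this cubical analogue of the singular data of $X$ and retriangulating recovers the homotopy type of $X$. I expect this to be the main obstacle, and I would attack it by a resolution/contractibility argument: the prisms $(\simp{1})^{k}$ are contractible, so the canonical comparison from the cubical resolution of $X$ down to $X$ itself realizes to a weak equivalence, via the usual projection lemma for realizations of contractible-fibred resolutions. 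Combining the three steps, $T \dashv U$ is a Quillen equivalence.
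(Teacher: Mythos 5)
First, a point of order: the paper does not prove this statement at all --- it is imported verbatim from \cite[Thm.~6.26]{doherty-kapulkin-lindsey-sattler} and stated without proof --- so there is no internal argument to compare yours against, and I am assessing your proposal on its own terms. Your skeleton is the correct standard one: since every cubical set is cofibrant, it suffices (by \cite[Cor.~1.3.16]{hovey}) that $T$ be left Quillen, that $T$ reflect weak equivalences between cofibrant objects, and that the counit $TUX \to X$ be a weak equivalence for every Kan complex $X$. Your first step is routine, and your second step is correct, but you should flag that ``cubical weak equivalences are detected by $\creali{\uvar}$'' is itself a substantial theorem, not an observation: it is the combination of \cite[Thm.~3.25]{carranza-kapulkin:homotopy-groups} with the cubical Whitehead theorem (\cref{thm:whitehead}). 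Since those are proved by cubical approximation rather than via the equivalence you are establishing, the appeal is legitimately non-circular, but it is an input of depth comparable to the theorem itself.

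The genuine gap is your third step, which is where all the content of the theorem lives. There is no ``usual projection lemma'' that deduces the counit statement from contractibility of the prisms $(\simp{1})^{k}$ alone. If there were, it would apply equally to the one-object subcategory $\boxcat_{\leq 0} \subset \boxcat$: there the analogous right adjoint sends a Kan complex $X$ to its set of vertices, the analogous counit is the inclusion $X_{0} \cdot \simp{0} \to X$ of the $0$-skeleton, and the ``cells'' $\simp{0}$ are certainly contractible --- yet this map is essentially never a weak equivalence. So contractibility of the pieces cannot suffice; what is needed is the particular combinatorial structure of $\boxcat$, and making that precise is exactly the content of $\boxcat$ being a test category in the sense of Grothendieck--Maltsiniotis--Cisinski (cf.\ \cite{cisinski-book}), or of the direct argument in \cite{doherty-kapulkin-lindsey-sattler}. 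If you want a finish that stays within the inputs you already used in your second step, here is one: $U$ is right Quillen by your first step, so $UX$ is a Kan cubical set; hence $\pi_{k}(UX,x) = [(\cube{k},\bdcube{k}),(UX,x)]$, and the adjunction $T \dashv U$ (together with the fact that $T$ carries $\gprod$ to $\times$, so elementary homotopies correspond) identifies this set with homotopy classes of maps $(T\cube{k},T\bdcube{k}) \to (X,x)$; since $X$ is Kan and the realization of the pair $(T\cube{k},T\bdcube{k})$ is $([0,1]^{k},\bd [0,1]^{k})$, this computes $\pi_{k}(X,x)$, and a triangle-identity check shows the resulting bijection is the one induced by the counit. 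Without an argument of this kind (or an explicit appeal to test-category machinery), your third step restates the difficulty rather than resolving it.
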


A \emph{relative cubical set} $(X,A)$ is a cubical set $X$ equipped with a cubical subset $A \subseteq X$.
A \emph{relative cubical map} $f \from (X,A) \to (Y,B)$ is a cubical map $f \from X \to Y$ that satisfies $f(A) \subseteq B$.
Given two relative cubical maps $f, g \from (X,A) \to (Y,B)$, a \emph{relative elementary homotopy} from $f$ to $g$ is an elementary homotopy $H \from X \otimes \cube{1} \to Y$ from $f$ to $g$ that satisfies $H(A \otimes \cube{1}) \subseteq B$.
Given two relative cubical sets $(X,A)$ and $(Y,B)$, let $[(X,A),(Y,B)]$ denote the quotient of the set of all relative cubical maps $(X,A) \to (Y,B)$ by the equivalence relation generated by relative elementary homotopies.

The \emph{set of path components} $\pi_{0}X$ of a Kan complex $X$ is given by
\[
    \pi_{0}X = [\cube{0},X].
\]
For $n > 0$, the \emph{$n$th homotopy group} $\pi_{n}(X,x)$ of a pointed Kan complex $(X,x)$ is given by
\[
    \pi_{n}(X,x) = [(\cube{n},\bdcube{n}),(X,x)],
\]
or equivalently,
\[
    \pi_{n}(X,x) = [(\bdcube{n+1},0),(X,x)].
\]

The \emph{set of path components} $\pi_{0}X$ of a cubical set $X$ is given by the set of path components $\pi_{0}\tilde{X}$ of its fibrant replacement in the Grothendieck model category structure on $\cSet$.
Similarly, for $n > 0$, the \emph{$n$th homotopy group} $\pi_{n}(X,x)$ of a pointed cubical set $(X,x)$ is given by the $n$th homotopy group $\pi_{n}(\tilde{X},x)$ of its fibrant replacement in the Grothendieck model category structure on $\cSet$.
(Note that $x \in X$ can be regarded as an element of $\tilde{X}$ via the inclusion $X \hookrightarrow \tilde{X}$ coming from fibrant replacement.)

We proceed to show that the homotopy groups of a cubical set $X$ agree with the simplicial homotopy groups of its triangulation $TX$.

Let $\lvert - \rvert_{\boxcat} \from \boxcat \to \Top$ denote the functor given by the assignment $[1]^{n} \mapsto [0,1]^{n}$.
Taking its left Kan extension along the Yoneda embedding $\boxcat \hookrightarrow \cSet$, we obtain the \emph{geometric realization functor} $\lvert - \rvert_{\boxcat} \from \cSet \to \Top$ and its right adjoint $\mathrm{Sing}_{\boxcat} \from \Top \to \cSet$ given by $\mathrm{Sing}_{\boxcat}(X)_{n} = \Top([0,1]^{n},X)$.
\[
\begin{tikzcd}[row sep = large, column sep = large]
    \boxcat \arrow[r,"{[1]^{n} \mapsto [0,1]^{n}}"] \arrow[d,hook] & \Top \arrow[dl,"\mathrm{Sing}_{\boxcat}",shift left = 2ex] \\
    \cSet \arrow[ur,"\lvert - \rvert_{\boxcat}"] &
\end{tikzcd}
\]

\begin{proposition}\label{prop:comparison-homotopy-groups}
    Given any cubical set $X$, we have a bijection $\pi_{0}X \cong \pi_{0}TX$ and a group isomorphism $\pi_{n}(X,x) \cong \pi_{n}(TX,x)$ for every $n >0$ and every $0$-cube $x \in X_{0}$.
\end{proposition}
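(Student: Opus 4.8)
The plan is to compare both the cubical homotopy groups of $X$ and the simplicial homotopy groups of $TX$ to the topological homotopy groups of the geometric realization $\creali{X}$, exploiting the fact that triangulation commutes with geometric realization. Throughout, a $0$-cube $x \from \cube{0} \to X$ determines the $0$-simplex $Tx \from \simp{0} \to TX$ (since $T\cube{0} = \simp{0}$) and the point $\creali{x} \in \creali{X}$, so the basepoints match up on all three sides.

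First I would establish a natural homeomorphism $\sreali{TX} \iso \creali{X}$. Both $\creali{-}$ and $\sreali{-} \circ T$ are cocontinuous functors $\cSet \to \Top$ (each is a composite of left adjoints, hence a left Kan extension along the Yoneda embedding), so it suffices to produce a natural isomorphism on representables. On $\cube{n}$ we have $\creali{\cube{n}} = [0,1]^{n}$, while $\sreali{T\cube{n}} = \sreali{(\simp{1})^{n}} \iso \sreali{\simp{1}}^{n} = [0,1]^{n}$, the middle isomorphism being Milnor's theorem that simplicial realization preserves finite products; this isomorphism is natural in $[1]^{n} \in \boxcat$. Since $TX$ may fail to be fibrant, $\pi_{n}(TX, x)$ is computed via a fibrant replacement, but in all cases the standard comparison between simplicial and topological homotopy groups gives $\pi_{n}(TX, x) \iso \pi_{n}(\sreali{TX}, \sreali{Tx})$; combining this with the homeomorphism above yields $\pi_{n}(TX, x) \iso \pi_{n}(\creali{X}, \creali{x})$ for all $n \geq 0$, a bijection when $n = 0$ and a group isomorphism when $n > 0$.

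It remains to identify the cubical homotopy groups of $X$ with $\pi_{n}(\creali{X}, \creali{x})$. By definition $\pi_{n}(X, x) = \pi_{n}(\tilde{X}, x)$, where $X \to \tilde{X}$ is a fibrant replacement in the Grothendieck model structure, i.e.\ an acyclic cofibration into a Kan complex. Since $\creali{-} = \sreali{-} \circ T$ is a composite of left Quillen functors (the triangulation $T$ by \cref{thm:triangulation}, and simplicial realization by the standard Quillen equivalence with $\Top$) and every cubical set is cofibrant, $\creali{-}$ sends weak homotopy equivalences to weak equivalences of spaces; hence $\creali{X} \to \creali{\tilde{X}}$ induces isomorphisms $\pi_{n}(\creali{X}, \creali{x}) \iso \pi_{n}(\creali{\tilde{X}}, \creali{x})$. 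For the cubical Kan complex $\tilde{X}$, the combinatorial homotopy groups $\pi_{n}(\tilde{X}, x) = [(\cube{n}, \bdcube{n}), (\tilde{X}, x)]$ agree with the topological homotopy groups $\pi_{n}(\creali{\tilde{X}}, \creali{x})$; this comparison is the main external input and can be cited from the theory of cubical Kan complexes \cite{carranza-kapulkin:homotopy-groups}. Chaining these identifications gives $\pi_{n}(X, x) \iso \pi_{n}(\creali{X}, \creali{x}) \iso \pi_{n}(TX, x)$, naturally and compatibly with the group structures for $n > 0$, and likewise $\pi_{0} X \iso \pi_{0} TX$. The one genuinely nontrivial ingredient is the combinatorial-versus-topological comparison for cubical Kan complexes; everything else is formal manipulation of adjunctions together with Milnor's product theorem.
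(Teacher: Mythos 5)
Your proof is correct and follows essentially the same route as the paper: both arguments compare the cubical homotopy groups of $X$ and the simplicial homotopy groups of $TX$ to the topological homotopy groups of the geometric realization, using a fibrant replacement $\tilde{X}$ in the Grothendieck model structure, the comparison theorem for cubical Kan complexes from \cite{carranza-kapulkin:homotopy-groups}, the natural homeomorphism $\sreali{T(-)} \iso \creali{-}$ (which the paper cites as \cite[Lem.~2.23]{carranza-kapulkin:homotopy-groups} rather than re-deriving via Milnor's product theorem), and left-Quillen-ness to transport weak equivalences across realizations. The only difference is bookkeeping — the paper pushes the fibrant replacement through $T$ and realizes afterwards, while you realize first and transport along $\creali{X} \weto \creali{\tilde{X}}$ — but the ingredients and structure are identical.
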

\begin{proof}
    Let $\tilde{X}$ be a fibrant replacement of $X$ in the Grothendieck model category structure on $\cSet$, and let $\tilde{T\tilde{X}}$ be a fibrant replacement of $T\tilde{X}$ in the Quillen model category structure on $\sSet$.
    Then, we have:
    \[
    \begin{array}{r l l}
        \pi_{n}(X,x)
        &= \pi_{n}(\tilde{X},x) &
        \\
        &\cong \pi_{n}(\lvert \tilde{X} \rvert_{\boxcat},x)
        & \text{by \cite[Thm.~3.25]{carranza-kapulkin:homotopy-groups}} \\
        &\cong \pi_{n}(\lvert T\tilde{X} \rvert_{\Simp},x)
        & \text{by \cite[Lem.~2.23]{carranza-kapulkin:homotopy-groups}} \\
        &\cong \pi_{n}(\lvert \tilde{T\tilde{X}} \rvert_{\Simp},x) 
        & \text{by \cite[Prop.~2.3.5]{quillen}} \\
        &\cong \pi_{n}(\tilde{T\tilde{X}},x)
        & \text{by \cite[Prop.~3.6.3]{hovey}} \\
        &= \pi_{n}(T\tilde{X},x)
        \\
        &\cong \pi_{n}(TX,x)
        & \text{by \cite[Thm.~6.26]{doherty-kapulkin-lindsey-sattler}.}
    \end{array}
    \]
    where $\lvert - \rvert_{\Simp} \from \sSet \to \Top$ denotes the geometric realization of simplicial sets.
\end{proof}

Next, we state Whitehead's theorem for cubical sets, which gives a characterization of the weak equivalences in the Grothendieck model category structure on $\cSet$ in terms of the homotopy groups.

\begin{theorem}[\protect{\cite[Thm.~4.7]{carranza-kapulkin:homotopy-groups}}]\label{thm:whitehead}
    A cubical map $f \from X \to Y$ between Kan complexes is a homotopy equivalence if and only if it induces a bijection $f_{*} \from \pi_{0}X \to \pi_{0}Y$ and an isomorphism $f_{*} \from \pi_{n}(X,x) \to \pi_{n}(Y,fx)$ for every $n > 0$ and every $0$-cube $x \in X_{0}$.
    \qed
\end{theorem}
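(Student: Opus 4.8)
The plan is to prove both implications, with the forward direction formal and the converse carrying all the content. First I would record that homotopy groups are invariants of the homotopy category of Kan complexes: an elementary homotopy $H \from X \gprod \cube{1} \to Y$ between maps $f$ and $g$ restricts on the basepoint to a path from $fx$ to $gx$, so $f_{*}$ and $g_{*}$ agree up to the change-of-basepoint isomorphism along that path. Consequently a homotopy inverse of $f$ induces a two-sided inverse of $f_{*}$ on $\pi_{0}$ and on each $\pi_{n}$, which gives the ``only if'' direction at once.

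For the converse, suppose $f \from X \to Y$ induces isomorphisms on all homotopy groups. The first step is to reduce to the case of a Kan fibration. Using the functorial factorization from the small object argument on the generating anodyne maps $J$, I would write $f = p \circ i$ with $i \from X \to E$ anodyne and $p \from E \to Y$ a Kan fibration. Since $Y$ is a Kan complex and $p$ is a Kan fibration, the composite $E \to Y \to \cube{0}$ is a Kan fibration, so $E$ is a Kan complex; moreover $i$ is a trivial cofibration with fibrant domain and codomain, hence a homotopy equivalence. Thus $i$ induces isomorphisms on homotopy groups, so by two-out-of-three $p$ does as well, and $f$ is a homotopy equivalence if and only if $p$ is. Hence I may assume $f = p$ is a Kan fibration.

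The second step analyzes the fiber. Let $F$ be the fiber of $p$ over a chosen basepoint $y$, obtained as the pullback of $p$ along $y \from \cube{0} \to Y$; as a pullback of a Kan fibration it is again a Kan complex. The long exact sequence of homotopy groups of the Kan fibration $F \ito E \xrightarrow{p} Y$, combined with the hypothesis that $p_{*}$ is an isomorphism in every degree, forces $\pi_{n}(F,\ast) = 0$ for all $n$ and $\pi_{0}F = \ast$; that is, $F$ is weakly contractible, and being a Kan complex it is contractible. The final step is an obstruction-theoretic argument showing that a Kan fibration with contractible fibers is a trivial fibration: one solves the lifting problems against the boundary inclusions $\bdcube{n} \ito \cube{n}$ inductively, the obstruction at each stage living in a homotopy group of the fiber and hence vanishing. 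A trivial fibration admits a section that is a homotopy inverse, completing the proof.

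The step I expect to be the main obstacle is this last one, together with the construction of the long exact sequence of a cubical Kan fibration: both rely on the cubical fibration theory and obstruction calculus being in place, which is where the real work lies. As an alternative that sidesteps this machinery, I would transport the problem to simplicial sets: since all cubical sets are cofibrant and the Kan complexes are exactly the fibrant objects of the Grothendieck model structure (\cref{thm:grothendieck}), $f$ is a homotopy equivalence if and only if it is a weak homotopy equivalence; by the Quillen equivalence $T \adj U$ (\cref{thm:triangulation}) this holds if and only if $Tf$ is a weak homotopy equivalence of simplicial sets, and the natural isomorphisms $\pi_{n}(X) \iso \pi_{n}(TX)$ of \cref{prop:comparison-homotopy-groups} identify the homotopy-group hypotheses on the two sides. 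The classical simplicial Whitehead theorem then finishes the argument.
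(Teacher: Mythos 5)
This theorem is never proved in the paper: it is imported from \cite{carranza-kapulkin:homotopy-groups} with a \qedsymbol, so there is no internal argument to measure your proposal against, and it must be judged on its own terms. On those terms your two routes have very different standing. The alternative route via triangulation is a genuine, essentially complete proof, and it is non-circular within the paper's logic: \cref{prop:comparison-homotopy-groups} precedes \cref{thm:whitehead} and its proof does not invoke it, and the remaining ingredients --- the equivalence of homotopy equivalence and weak homotopy equivalence for maps between Kan complexes (\cite[Thm.~2.4.26]{cisinski-book}, quoted in the proof of \cref{cor:whitehead}), the fact that the left Quillen equivalence $T$ of \cref{thm:triangulation} preserves and reflects weak equivalences between the (automatically cofibrant) objects of $\cSet$, and the classical simplicial Whitehead theorem --- are all either quoted in the paper or standard; the only point worth making explicit is that the isomorphisms of \cref{prop:comparison-homotopy-groups} are natural in $X$, which is needed to transport the hypothesis ``$f_*$ is an isomorphism'' across them, and which does hold since that proof is a chain of natural comparisons. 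Your main route, by contrast, is the classical fiberwise strategy, and while correct in outline, its two pillars are of unequal status: the long exact sequence of a cubical Kan fibration is available in the cited paper \cite{carranza-kapulkin:homotopy-groups}, but the claim that a cubical Kan fibration with contractible fibers has the right lifting property against all boundary inclusions $\bdcube{k} \ito \cube{k}$ is exactly the step you cannot extract from the Grothendieck model structure without circularity --- identifying such a map as a trivial fibration of the model structure would require already knowing that $\pi_*$-isomorphisms between Kan complexes are weak homotopy equivalences, which is the theorem being proved --- so it must be done by hand, and that obstruction-theoretic lemma is where essentially all of the content lives. As written, then, route one is an outline whose hard kernel is deferred (as you acknowledge), while route two is the proof you should lead with.
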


\begin{corollary}\label{cor:whitehead}
    A cubical map $f \from X \to Y$ is a weak homotopy equivalence if and only if it induces a bijection $f_{*} \from \pi_{0}X \to \pi_{0}Y$ and an isomorphism $f_{*} \from \pi_{n}(X,x) \to \pi_{n}(Y,fx)$ for every $n > 0$ and every $0$-cube $x \in X_{0}$.
\end{corollary}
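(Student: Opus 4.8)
The plan is to reduce to \cref{thm:whitehead}, which treats Kan complexes, by passing to fibrant replacements; throughout I use \cref{thm:grothendieck}, which identifies the weak homotopy equivalences with the weak equivalences of a model structure and hence guarantees they satisfy two-out-of-three. First I would factor the maps $X \to \cube{0}$ and $Y \to \cube{0}$ as anodyne maps $i_{X} \from X \to \tilde{X}$ and $i_{Y} \from Y \to \tilde{Y}$ followed by Kan fibrations, so that $\tilde{X}$ and $\tilde{Y}$ are Kan complexes and hence fibrant replacements of $X$ and $Y$. Lifting $i_{Y} \circ f$ against the anodyne map $i_{X}$ and the Kan fibration $\tilde{Y} \to \cube{0}$ produces a map $\tilde{f} \from \tilde{X} \to \tilde{Y}$ with $\tilde{f} \circ i_{X} = i_{Y} \circ f$. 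By the definition of the homotopy groups of a cubical set as those of its fibrant replacement, the induced map $f_{*}$ on $\pi_{0}$ and on homotopy groups is identified with $\tilde{f}_{*}$, so $f$ induces isomorphisms exactly when $\tilde{f}$ does (modulo the basepoint bookkeeping discussed below); and since $i_{X}$ and $i_{Y}$ are weak homotopy equivalences and the square commutes, two-out-of-three shows that $f$ is a weak homotopy equivalence if and only if $\tilde{f}$ is. This reduces the corollary to the case of a cubical map $\tilde{f} \from \tilde{X} \to \tilde{Y}$ between Kan complexes.

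For such $\tilde{f}$ I would show that being a weak homotopy equivalence is equivalent to being a homotopy equivalence; combined with \cref{thm:whitehead} this finishes the proof. If $\tilde{f}$ is a homotopy equivalence with homotopy inverse $g$, then $g^{*}$ is a two-sided inverse of $\tilde{f}^{*} \from [\tilde{Y},Z] \to [\tilde{X},Z]$ for every cubical set $Z$, using only that composition descends to homotopy classes, so $\tilde{f}$ is a weak homotopy equivalence. Conversely, if $\tilde{f}$ is a weak homotopy equivalence, a Yoneda-style argument applies: taking $Z = \tilde{X}$, surjectivity of $\tilde{f}^{*} \from [\tilde{Y},\tilde{X}] \to [\tilde{X},\tilde{X}]$ yields $g$ with $g \tilde{f} \simeq \id[\tilde{X}]$, and then taking $Z = \tilde{Y}$, the relation $\tilde{f} g \tilde{f} \simeq \tilde{f}$ together with injectivity of $\tilde{f}^{*} \from [\tilde{Y},\tilde{Y}] \to [\tilde{X},\tilde{Y}]$ gives $\tilde{f} g \simeq \id[\tilde{Y}]$, so $\tilde{f}$ is a homotopy equivalence. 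It is essential here that $\tilde{X}$ and $\tilde{Y}$ are Kan complexes, since the defining bijections of a weak homotopy equivalence are only available when the target $Z$ is a Kan complex.

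The step requiring the most care is the basepoint bookkeeping in the reverse implication of the corollary. Applying \cref{thm:whitehead} to $\tilde{f}$ demands isomorphisms on $\pi_{n}(\tilde{X},b)$ for every $0$-cube $b$ of $\tilde{X}$, whereas the hypothesis on $f$ supplies them only for $b$ in the image of $X_{0}$. To close this gap I would use that the anodyne map $i_{X}$ is surjective on path components: each open box inclusion, and hence each relative cell complex built from $J$, connects every newly attached $0$-cube to an existing one, so every $0$-cube of $\tilde{X}$ lies in the path component of some $i_{X}(x)$. The change-of-basepoint isomorphisms along a connecting path, which are natural with respect to $\tilde{f}$, then transport the hypothesis to all basepoints of $\tilde{X}$. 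The remaining verifications -- naturality of the induced maps under the chosen fibrant replacements and compatibility of composition with the homotopy relation -- are routine, and I would only record them briefly.
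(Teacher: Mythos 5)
Your proposal is correct and follows essentially the same route as the paper: pass to fibrant replacements $\tilde{f} \from \tilde{X} \to \tilde{Y}$ in the Grothendieck model structure, use two-out-of-three to reduce to a map between Kan complexes, identify weak homotopy equivalences between Kan complexes with homotopy equivalences, and conclude by \cref{thm:whitehead}. The only differences are matters of detail: you prove the intermediate equivalence directly via the Yoneda-style argument where the paper cites Cisinski's book (Thm.~2.4.26), and you spell out the basepoint-transport step that the paper's brief proof leaves implicit.
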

\begin{proof}
    Let $\tilde{f} \from \tilde{X} \to \tilde{Y}$ be a functorial fibrant replacement of $f \from X \to Y$ in the Grothendieck model category structure on $\cSet$.
    Then, $f$ is a weak homotopy equivalence if and only if $\tilde{f}$ is.
    But a cubical map between Kan complexes is a weak homotopy equivalence if and only if it is a homotopy equivalence (see \cite[Thm.~2.4.26]{cisinski-book}).
    The required result then follows from \cref{thm:whitehead}.
\end{proof}

\subsection{The Cisinski model category structure for $n$-types of cubical sets}

We use Cisinski theory to construct a model category structure on $\cSet$, analogous to the model category structure of \cref{thm:cisinski-model-simplicial}.

Fix an integer $n \geq 0$.

\begin{definition}
    \leavevmode
    \begin{enumerate}
        \item A cubical map $f \from X \to Y$ is \emph{$n$-anodyne} if it belongs to the saturation of the set
        \[
            J_{n} = \left\{ \obox{k}{i,\varepsilon} \hookrightarrow \cube{k} \ \mid \ k > 0, \ 1 \leq i \leq k, \ \varepsilon = 0, 1 \right\} \cup \left\{\bdcube{k} \hookrightarrow \cube{k} \ \mid \ k \geq n+2 \right\}.
        \]
        Note that, following the notation of \cite[Ex.~2.4.13]{cisinski-book}, the class of $n$-anodyne maps is precisely the class $\mathrm{An}_{(-) \otimes \cube{1}}(\left\{\bdcube{n+2} \hookrightarrow \cube{n+2}\right\},\left\{\bdcube{k} \hookrightarrow \cube{k} \ \mid \ k \geq 0\right\})$.
        
        \item A cubical map $f \from X \to Y$ is a \emph{naive $n$-fibration} if it has the right lifting property with respect to every $n$-anodyne map.
        A cubical set $X$ is \emph{$n$-fibrant} if the unique map $X \to \cube{0}$ is a naive $n$-fibration.
        \item A cubical map $f \from X \to Y$ is an \emph{$n$-equivalence} if, for every $n$-fibrant cubical set $Z$, the induced map
        \[
            f^{*} \from [Y,Z] \to [X,Z]
        \]
        is a bijection.
        \item A cubical map $f \from X \to Y$ is an \emph{$n$-fibration} if it has the right lifting property with respect to every map that is both a monomorphism and an $n$-equivalence.
    \end{enumerate}
\end{definition}

Note that every naive $n$-fibration is, in particular, a Kan fibration.
Thus, every $n$-fibrant cubical set is a Kan complex.
It follows that every weak homotopy equivalence is, in particular, an $n$-equivalence.

\begin{theorem}\label{thm:cisinski-model-cubical}
    The category $\cSet$ admits a cofibrantly generated model category structure where:
    \begin{itemize}
        \item the cofibrations are the monomorphisms,
        \item the fibrant objects are the $n$-fibrant cubical sets, and
        \item the weak equivalences are the $n$-equivalences.
    \end{itemize}
    Moreover, a cubical map $f \from X \to Y$ between $n$-fibrant cubical sets $X$ and $Y$ is an $n$-fibration if and only if it is a naive $n$-fibration.
    We refer to this model category structure as the Cisinski model category structure for $n$-types of cubical sets, and denote it by $\mathrm{L}_{n} \cSet$.
    It is the left Bousfield localization of the Grothendieck model category structure on $\cSet$ at the class of $n$-equivalences.
\end{theorem}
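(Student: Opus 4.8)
The plan is to obtain \cref{thm:cisinski-model-cubical} as a direct application of Cisinski's theory of model structures on presheaf toposes \cite{cisinski-book}, following the template by which \cref{thm:cisinski-model-simplicial} is itself established. The category $\cSet$ is a presheaf topos, and the boundary inclusions $\bdcube{k} \ito \cube{k}$ furnish a cellular model, so that the monomorphisms are the cofibrations of a cofibrantly generated structure. The required homotopical input is an exact cylinder: I would take the geometric product with the interval, $X \mapsto X \otimes \cube{1}$, together with the endpoint inclusions $\partial_{1,0}, \partial_{1,1} \from X \otimes \cube{0} \to X \otimes \cube{1}$ and the projection $X \otimes \cube{1} \to X \otimes \cube{0}$; this is the very cylinder underlying the Grothendieck model structure of \cref{thm:grothendieck}, so its exactness is already available. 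It then remains to check that $J_{n}$ generates a class of anodyne extensions in Cisinski's sense, and to feed this data into his existence theorem.

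The step I expect to be the main obstacle is the verification that the enlarged generating set $J_{n}$ is compatible with the cylinder, i.e. that the pushout-products of its generators with the interval inclusions remain in the saturation of $J_{n}$. For the open-box generators $\obox{k}{i,\varepsilon} \ito \cube{k}$ this is exactly the compatibility of open boxes with the monoidal product already exploited in building the Grothendieck structure. The genuinely new content concerns the generators $\bdcube{k} \ito \cube{k}$ with $k \geq n+2$; here the clean observation is that the pushout-product $(\bdcube{k} \ito \cube{k}) \pp (\bdcube{1} \ito \cube{1})$ is again a boundary inclusion $\bdcube{k+1} \ito \cube{k+1}$, now in dimension $k+1 \geq n+2$ and hence once more a generator in $J_{n}$, while the pushout-products against the endpoint inclusions $\cube{0} \to \cube{1}$ decompose, by a direct inspection of the faces of $\cube{k+1}$, into open-box and boundary inclusions of dimension $\geq n+2$. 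Granting this, a cubical map is a naive $n$-fibration precisely when it has the right lifting property against the full generated anodyne class, so Cisinski's machinery applies.

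With the anodyne class established, Cisinski's existence theorem yields a cofibrantly generated model structure on $\cSet$ whose cofibrations are the monomorphisms and whose fibrant objects are exactly those with the right lifting property against the $n$-anodyne maps, namely the $n$-fibrant cubical sets. Its weak equivalences are, by the general characterization in that theory, precisely the maps $f$ inducing a bijection $f^{*} \from [Y,Z] \to [X,Z]$ for every fibrant $Z$ --- that is, the $n$-equivalences as defined above --- and the same theory supplies the asserted dichotomy that a map between $n$-fibrant objects is an $n$-fibration \iff it is a naive $n$-fibration. These match the three bullet points and the final clause of the statement.

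Finally, to identify $\mathrm{L}_{n}\cSet$ as the left Bousfield localization of the Grothendieck model structure at the $n$-equivalences, I would use that $J_{n} = J \cup \set{\bdcube{k} \ito \cube{k}}{k \geq n+2}$ enlarges the generating anodyne set $J$ of the Grothendieck structure; the general comparison of Cisinski model structures attached to nested anodyne classes then exhibits $\mathrm{L}_{n}\cSet$ as the localization of \cref{thm:grothendieck} at the extra generators. Using the description $\pi_{k}(X,x) = [(\bdcube{k+1},0),(X,x)]$, forcing each such boundary inclusion (for $k+1 \geq n+2$, i.e. $k \geq n+1$) to be a weak equivalence is exactly what trivializes the homotopy groups in degrees above $n$; hence these generators generate the $n$-equivalences as a localizing class, and the resulting localization is the localization at the class of $n$-equivalences, as claimed.
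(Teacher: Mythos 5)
Your proposal is correct and takes essentially the same route as the paper: the paper's entire proof is a one-sentence application of Cisinski's theorem \cite[Thm.~2.4.19]{cisinski-book} with the cylinder $(-) \otimes \cube{1}$ and the $n$-anodyne maps as the class of anodyne extensions, which is exactly your framework. Your pushout-product verifications --- that $(\bdcube{k} \ito \cube{k}) \pp (\bdcube{1} \ito \cube{1})$ is the boundary inclusion $\bdcube{k+1} \ito \cube{k+1}$ (still a generator since $k+1 \geq n+2$), and that the pushout-product with an endpoint inclusion is an open box inclusion --- are precisely the ``straightforward'' checks the paper leaves implicit, so your write-up just makes the same argument explicit.
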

\begin{proof}
    This follows from a straightforward application of \cite[Thm.~2.4.19]{cisinski-book} with the cylinder functor $(-) \otimes \cube{1}$ and the class of anodyne extensions given by the $n$-anodyne maps, i.e.,
    \[
        \mathrm{An}_{(-) \otimes \cube{1}}(\left\{\bdcube{n+2} \hookrightarrow \cube{n+2}\right\},\left\{\bdcube{k} \hookrightarrow \cube{k} \ \mid \ k \geq 0\right\}). \qedhere
    \]
\end{proof}

The following result gives an alternative characterization of $n$-fibrant cubical sets.

\begin{proposition}\label{prop:n-fibrant-characterization}
    A cubical set $X$ is $n$-fibrant if and only if it is a Kan complex with $\pi_{k}(X,x) \cong 0$ for all $k \geq n+1$ and all $0$-cubes $x \in X_{0}$.
\end{proposition}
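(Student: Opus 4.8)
The plan is to prove both directions by relating $n$-fibrancy (a lifting condition against $J_n$) to the vanishing of higher homotopy groups, using the characterization of homotopy groups via boundary inclusions.

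The plan is to unwind the definition of $n$-fibrancy into lifting conditions, split the generating set into its two families, and then match the boundary-filling conditions against the vanishing of homotopy groups via the identification $\pi_{j}(X,x) = [(\bdcube{j+1},0),(X,x)]$.

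First I would observe that, since the right lifting property against a set is equivalent to the right lifting property against its saturation, a cubical set $X$ is $n$-fibrant if and only if the map $X \to \cube{0}$ has the right lifting property with respect to the generating set
\[
    J_{n} = \left\{ \obox{k}{i,\varepsilon} \ito \cube{k} \ \mid \ k > 0, \ i = 1, \ldots, k, \ \varepsilon = 0, 1 \right\} \cup \left\{\bdcube{k} \ito \cube{k} \ \mid \ k \geq n+2 \right\}.
\]
The first family is precisely the set $J$ of open box inclusions, so $X \to \cube{0}$ has the right lifting property against it exactly when $X$ is a Kan complex. Hence $X$ is $n$-fibrant if and only if $X$ is a Kan complex and $X \to \cube{0}$ has the right lifting property with respect to $\bdcube{k} \ito \cube{k}$ for every $k \geq n+2$. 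It then remains to show that, for a Kan complex $X$, this boundary-filling condition is equivalent to the vanishing of $\pi_{k}(X,x)$ for all $k \geq n+1$ and all $0$-cubes $x$.

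Next I would prove, for each fixed $j \geq n+1$ (writing $k = j+1 \geq n+2$), that a Kan complex $X$ admits the right lifting property against $\bdcube{j+1} \ito \cube{j+1}$ if and only if $\pi_{j}(X,x) = 0$ for every $0$-cube $x$, using $\pi_{j}(X,x) = [(\bdcube{j+1},0),(X,x)]$. For the forward direction, given a based map $g \from \bdcube{j+1} \to X$ with $g(0) = x$, the lifting property extends $g$ to some $G \from \cube{j+1} \to X$; composing $G$ with a contraction of $\cube{j+1}$ onto the vertex $0$ that fixes $0$ throughout exhibits a based nullhomotopy of $g$, so $[g] = 0$. For the converse, given an arbitrary map $g \from \bdcube{j+1} \to X$, I would set $x = g(0)$ and invoke the vanishing of $\pi_{j}(X,x)$ to obtain a based homotopy from $g$ to the constant map at $x$; since that constant map evidently extends over $\cube{j+1}$, and since extension along the monomorphism $\bdcube{j+1} \ito \cube{j+1}$ into a Kan complex is invariant under homotopy of the map being extended, $g$ extends as well.

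The main obstacle is the bookkeeping of basepoints in this last step: the lifting property is an unbased condition whereas the homotopy groups are based invariants, so I must verify that the contraction of the cube can be taken to fix the vertex $0$ (making the induced nullhomotopy based) and that homotopy-invariance of the extension problem holds for maps into a fibrant object. Both are standard: contractibility of $\cube{m}$ rel a vertex follows from the connection structure of the cube category as developed in \cite{carranza-kapulkin:homotopy-groups}, while homotopy-invariance of extension along a cofibration into a fibrant object is a formal consequence of the Grothendieck model structure (\cref{thm:grothendieck}). Ranging over all $j \geq n+1$ and all basepoints then completes the equivalence.
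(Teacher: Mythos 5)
Your proof is correct and follows essentially the same route as the paper: both reduce $n$-fibrancy to the right lifting property against the two families in $J_{n}$ (Kan complex plus boundary-filling in dimensions $\geq n+2$) and then identify boundary-filling with vanishing of $\pi_{k}(X,x) = [(\bdcube{k+1},0),(X,x)]$. The paper simply asserts this last equivalence in one line, whereas you spell out the two directions (contraction rel the basepoint, and homotopy-invariance of the extension problem for maps into a Kan complex); this is exactly the content the paper leaves implicit.
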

\begin{proof}
    Recall that the $k$th homotopy group $\pi_{k}(X,x)$ of a pointed Kan complex $(X,x)$ is given by the set $[(\bdcube{k+1},0),(X,x)]$.
    Thus, for any Kan complex $X$, the map $X \to \cube{0}$ has the right lifting property with respect to the boundary inclusion $\bdcube{k+1} \hookrightarrow \cube{k+1}$ if and only if $\pi_{k}(X,x) \cong 0$ for every $0$-cube $x \in X$.
\end{proof}

Next, we give a characterization of $n$-equivalences in terms of homotopy groups, analogous to \cref{cor:whitehead}.
We begin with a technical lemma.

\begin{lemma}\label{lem:n-bijective-maps}
    Let $f \from X \to Y$ be a cubical map that is a bijection on the $k$-cubes for each $0 \leq k \leq n+1$.
    Then, the induced map $f_{*} \from \pi_{0}X \to \pi_{0}Y$ is a bijection, and the induced group homomorphism $f_{*} \from \pi_{k}(X,x) \to \pi_{k}(Y,fx)$ is an isomorphism for each $0 < k \leq n$ and each $0$-cube $x \in X_{0}$.
\end{lemma}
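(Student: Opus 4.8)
The plan is to transport the statement to topology, where it becomes an instance of the connectivity of skeletal inclusions of CW complexes. The two inputs are a natural identification of the cubical homotopy groups of an \emph{arbitrary} cubical set with the homotopy groups of its geometric realization, and the observation that a map which is bijective on $k$-cubes for $0 \le k \le n+1$ restricts to an isomorphism of $(n+1)$-skeleta whose realizations are exactly the $(n+1)$-skeleta of the ambient CW complexes.

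First I would record that, for every cubical set $X$ and every $0$-cube $x$, there is an isomorphism $\pi_{k}(X,x) \iso \pi_{k}(\creali{X},x)$, natural in $(X,x)$. The proof of \cref{prop:comparison-homotopy-groups} already establishes this for fibrant $X$, since it factors through the isomorphism $\pi_{k}(\tilde{X},x) \iso \pi_{k}(\creali{\tilde{X}},x)$. For general $X$ one chooses a fibrant replacement $X \ito \tilde{X}$ in the Grothendieck model structure, notes that it is a weak homotopy equivalence, and deduces from \cref{thm:triangulation} together with the natural homeomorphism $\creali{-} \iso \sreali{T-}$ that $\creali{X} \to \creali{\tilde{X}}$ is a weak homotopy equivalence of spaces. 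Since $\pi_{k}(X,x)$ is by definition $\pi_{k}(\tilde{X},x)$, this yields the desired natural isomorphism, under which $f_{*}$ corresponds to $\creali{f}_{*}$; so it suffices to prove the statement for the continuous map $\creali{f} \from \creali{X} \to \creali{Y}$.

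Next I would exploit the hypothesis on low cubes. Since $f$ is a bijection on $k$-cubes for $0 \le k \le n+1$ and commutes with the cubical structure maps, it restricts to an isomorphism $\sk{n+1}X \iso \sk{n+1}Y$, fitting into the naturality square of skeletal inclusions
\[
\begin{tikzcd}
    \sk{n+1}X \arrow[r] \arrow[d,"\iso"'] & X \arrow[d,"f"] \\
    \sk{n+1}Y \arrow[r] & Y.
\end{tikzcd}
\]
Applying $\creali{-}$ and recalling the cubical skeletal filtration — each inclusion $\sk{m}Z \ito \sk{m+1}Z$ is a pushout of a coproduct of boundary inclusions $\bdcube{m+1} \ito \cube{m+1}$, which realize to the cell attachments $S^{m} \ito D^{m+1}$ — exhibits $\creali{Z}$ as a CW complex with $(n+1)$-skeleton $\creali{\sk{n+1}Z}$, the relative cells of $\creali{\sk{n+1}Z} \ito \creali{Z}$ all having dimension $\ge n+2$. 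Hence both horizontal maps in the realized square are $(n+1)$-connected and so induce isomorphisms on $\pi_{k}$ for $0 \le k \le n$ (the chosen basepoints are $0$-cubes, hence lie in the $(n+1)$-skeleta), while the left-hand map is a homeomorphism. By two-out-of-three for isomorphisms, $\creali{f}$ induces isomorphisms on $\pi_{k}$ for $0 \le k \le n$; transporting back along the identification of the first step gives a bijection for $k=0$ and a group isomorphism for $0 < k \le n$, as claimed.

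The step I expect to require the most care is the interaction of the geometric realization with skeleta. It is essential here to use the \emph{cubical} realization $\creali{-}$ rather than the triangulation: $\creali{\cube{m}} = [0,1]^{m}$ is a single $m$-cell with its faces, so $\creali{\sk{n+1}Z}$ is genuinely the $(n+1)$-skeleton of $\creali{Z}$, whereas $T\cube{m} = (\simp{1})^{m}$ already contains low-dimensional diagonal simplices, so the triangulation does \emph{not} carry cubical skeleta to simplicial skeleta and the analogous simplicial reduction would fail. One must also confirm that the realization of a cubical set with connections is a CW complex whose cellular skeleta are the realizations of the cubical skeleta, and that realization sends the weak homotopy equivalence $X \to \tilde{X}$ to a weak homotopy equivalence of spaces; both are standard facts about cubical realization and should be cited from the references recalled in this section.
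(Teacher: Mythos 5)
Your proposal is correct, but it takes a genuinely different route from the paper's proof. The paper never leaves $\cSet$: it constructs the fibrant replacements $\tilde{X}$ and $\tilde{Y}$ explicitly via the small object argument (iteratively attaching cubes along all open boxes), proves by induction that each stage $f_{j} \from X_{j} \to Y_{j}$ remains a bijection on cubes of dimension $\leq n+1$ --- the point being that attaching maps out of open boxes are determined by cubes of dimension $\leq n+1$, so the gluing data on the two sides match --- and then concludes from the fact that, for Kan complexes, $\pi_{0}$ and $\pi_{k}$ with $k \leq n$ are computed from maps and homotopies involving only cubes of dimension $\leq n+1$. You instead transport the statement to $\Top$ along $\creali{-}$ and invoke cellular approximation for skeletal inclusions of CW complexes. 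Both arguments are sound; the difference lies in the external input. The paper's route uses nothing beyond what it already cites (skeletal induction and the cubical Whitehead theorem), whereas yours additionally needs: (i) that $\creali{-}$ sends weak equivalences of arbitrary cubical sets to weak homotopy equivalences, which you correctly reduce to Ken Brown's lemma, \cref{thm:triangulation}, and the natural homeomorphism $\creali{X} \iso \sreali{TX}$; (ii) that the realization of a cubical set \emph{with connections} is a CW complex whose cellular skeleta are the $\creali{\sk{m}X}$, which rests on the same Eilenberg--Zilber/skeletal-filtration property of $\boxcat$ (\cite[Thm.~1.3.8]{cisinski-book}) together with the fact that realization, being a left adjoint, carries the filtration pushouts to cell attachments; and (iii) naturality of the comparison $\pi_{k}(X,x) \iso \pi_{k}(\creali{X},x)$, which requires fixing a functorial fibrant replacement. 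You also correctly identify the one real trap --- the reduction must use $\creali{-}$ rather than $T$, since triangulation does not carry cubical skeleta to simplicial skeleta. In exchange for these extra citations, your argument is more conceptual and sidesteps the paper's somewhat delicate induction, where one must also observe that attaching an $(n+2)$-cube creates a new $(n+1)$-cube (its missing face) on both sides in matching fashion.
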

\begin{proof}
    We begin by fixing some notation.
    Consider the following commuting diagram:
    \[
    \begin{tikzcd}
        X_{0} \arrow[r] \arrow[d,"f_{0}"'] & X_{1} \arrow[r] \arrow[d,"f_{1}"'] & X_{2} \arrow[r] \arrow[d,"f_{2}"'] & \cdots \\
        Y_{0} \arrow[r] & Y_{1} \arrow[r] & Y_{2} \arrow[r] & \cdots
    \end{tikzcd}
    \]
    where $X_{0} = X$, $Y_{0} = Y$, $f_{0} = f$, and the maps $X_{j} \to X_{j+1}$ and $Y_{j} \to Y_{j+1}$ are defined by the following pushouts:
    \[
    \begin{tikzcd}
        \displaystyle\coprod_{s \in S_{j}}{\obox{k_{s}}{i_{s},\varepsilon_{s}}} \arrow[d,hook] \arrow[r] \arrow[dr,phantom,"\ulcorner" description, very near end] & X_{j} \arrow[d]  \\
        \displaystyle\coprod_{s \in S_{j}}{\cube{k_{s}}} \arrow[r] & X_{j+1}
    \end{tikzcd}
    \qquad \text{and} \qquad
    \begin{tikzcd}
        \displaystyle\coprod_{t \in T_{j}}{\obox{k_{t}}{i_{t},\varepsilon_{t}}} \arrow[d,hook] \arrow[r] \arrow[dr,phantom,"\ulcorner" description, very near end] & Y_{j} \arrow[d]  \\
        \displaystyle\coprod_{t \in T_{j}}{\cube{k_{t}}} \arrow[r] & Y_{j+1}
    \end{tikzcd}
    \]
    where
    \[
        S_{j} = \{ \obox{k}{i,\varepsilon} \to X_{j} \ \mid \ k > 0, \ 1 \leq i \leq k, \ \varepsilon = 0,1 \}
    \]
    and
    \[
        T_{j} = \{ \obox{k}{i,\varepsilon} \to Y_{j} \ \mid \ k > 0, \ 1 \leq i \leq k, \ \varepsilon = 0,1 \}.
    \]
    Thus, $\tilde{X} := \colim X_{j}$ and $\tilde{Y} := \colim Y_{j}$ are the functorial fibrant replacements of $X$ and $Y$ respectively, in the Grothendieck model category structure on $\cSet$ (see, for example, \cite[Thm.~2.1.14]{hovey}).

    We will show that each of the maps $f_{j} \from X_{j} \to Y_{j}$ is a bijection on cubes of dimensions $\leq n+1$.
    From this, it will follow that the induced map $\tilde{f} \from \tilde{X} \to \tilde{Y}$ is also a bijection on the cubes of dimensions $\leq n+1$.
    Since $\tilde{X}$ and $\tilde{Y}$ are Kan complexes, we have:
    \[
        \pi_{k}(\tilde{X},x) = [(\cube{k},\bdcube{k}),(X,x)]
        \qquad \text{and} \qquad
        \pi_{k}(\tilde{Y},fx) = [(\cube{k},\bdcube{k}),(X,fx)]
    \]
    for every $k > 0$ and $0$-cube $x \in X_{0}$.
    Thus, the map $\tilde{f}$ induces an isomorphism on the homotopy groups in degrees $\leq n$.
    
    We proceed by induction on $j$.
    The base case ($j=0$) is true by assumption.
    For the induction step, we suppose that $f_{j} \from X_{j} \to Y_{j}$ is a bijection on cubes of dimensions $\leq n+1$.

    Consider the sets
    \[
        S_{j,\leq n+1} = \{ \obox{k}{i,\varepsilon} \to X_{j} \ \mid \ 0 < k \leq n+1, \ 1 \leq i \leq k, \ \varepsilon = 0,1 \},
    \]
    \[
        S_{j,\geq n+2} = \{ \obox{k}{i,\varepsilon} \to X_{j} \ \mid \ k \geq n+2, \ 1 \leq i \leq k, \ \varepsilon = 0,1 \},
    \]
    \[
        T_{j,\leq n+1} = \{ \obox{k}{i,\varepsilon} \to Y_{j} \ \mid \ 0 < k \leq n+1, \ 1 \leq i \leq k, \ \varepsilon = 0,1 \}, \text{ and }
    \]
    \[
        T_{j,\geq n+2} = \{ \obox{k}{i,\varepsilon} \to Y_{j} \ \mid \ k \geq n+2, \ 1 \leq i \leq k, \ \varepsilon = 0,1 \}.
    \]
    Then, we have 
    \[
        S_{j} = S_{j,\leq n+1} \sqcup S_{j,\geq n+2}
        \qquad \text{and} \qquad
        T_{j} = T_{j,\leq n+1} \sqcup T_{j,\geq n+2}.
    \]
    Furthermore, since $f_{j} \from X_{j} \to Y_{j}$ is a bijection on cubes of dimensions $\leq n+1$, we also have a bijection
    \[
        S_{j,\leq n+1} \cong T_{j, \leq n+1}.
    \]
    Thus, any $k$-cubes that are glued to $X_{j}$ to obtain $X_{j+1}$ are also glued to $Y_{j}$ to obtain $Y_{j+1}$ (and vice-versa), for all $k \leq n+1$.
    Thus, the induced map $f_{j+1} \from X_{j+1} \to Y_{j+1}$ is also a bijection on cubes of dimensions $\leq n+1$, completing the induction step.
\end{proof}

\begin{theorem}\label{thm:n-equivalences-characterization}
    A cubical map $f \from X \to Y$ is an $n$-equivalence if and only if it induces a bijection $f_{*} \from \pi_{0}X \to \pi_{0}Y$ and an isomorphism $f_{*} \from \pi_{k}(X,x) \to \pi_{k}(Y,fx)$ for every $0 < k \leq n$ and every $0$-cube $x \in X_{0}$.
\end{theorem}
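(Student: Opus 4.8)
The plan is to extract the homotopy-group characterization from the mapping-space definition of $n$-equivalence by comparing $f$ with a carefully chosen $n$-fibrant replacement and then invoking Whitehead's theorem (\cref{thm:whitehead}). First I would reduce to the case where $X$ and $Y$ are Kan complexes, so that their homotopy groups are computed directly. Recall (from the remark preceding \cref{thm:cisinski-model-cubical}) that every weak homotopy equivalence is an $n$-equivalence, and that by \cref{cor:whitehead} a weak homotopy equivalence is exactly a map inducing isomorphisms on all homotopy groups. Choosing functorial Grothendieck-fibrant replacements produces a commuting square relating $f \from X \to Y$ to $\tilde f \from \tilde X \to \tilde Y$ whose horizontal maps are weak homotopy equivalences. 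Since the class of $n$-equivalences (being the weak equivalences of the model structure of \cref{thm:cisinski-model-cubical}) and the class of maps inducing isomorphisms on $\pi_{0},\dots,\pi_{n}$ both satisfy two-out-of-three, each of the two properties holds for $f$ \iff it holds for $\tilde f$. I may therefore assume that $X$ and $Y$ are Kan complexes.

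\textbf{The crux: a low-cube-preserving replacement.} The heart of the argument is the claim that for a Kan complex $X$ there is an $n$-fibrant replacement $\eta_{X} \from X \to RX$ that is a bijection on cubes of dimension $\le n+1$. Granting this, \cref{lem:n-bijective-maps} immediately gives that $\eta_{X}$ induces a bijection $\pi_{0}X \to \pi_{0}RX$ and isomorphisms $\pi_{k}(X,x) \to \pi_{k}(RX,x)$ for all $0 < k \le n$. To prove the claim I would run the small object argument for the generating set $J_{n}$ relatively, attaching a cell only for those lifting problems that do not already admit a solution. The key observation is that, since $X$ is a Kan complex and the only generators of $J_{n}$ besides open box inclusions are the boundary inclusions $\bdcube{k} \hookrightarrow \cube{k}$ with $k \ge n+2$, no cell of dimension $\le n+1$ is ever attached: throughout the construction every cube of dimension $\le n+1$ is a cube of $X$, so any open-box lifting problem of dimension $\le n+2$ has all of its faces in $X$ and is solved within $X$ by the Kan condition, while the remaining generators and the higher open boxes contribute only cells of dimension $\ge n+2$. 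Hence $\eta_{X}$ is the identity on cubes of dimension $\le n+1$, and $RX$ has the right lifting property against $J_{n}$, i.e.\ is $n$-fibrant.

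\textbf{Conclusion via Whitehead.} With the claim in hand I would finish by a two-out-of-three and Whitehead argument. Given Kan complexes $X,Y$ and $f$, lifting $\eta_{Y}f$ against the trivial cofibration $\eta_{X}$ produces $Rf \from RX \to RY$ with $Rf\,\eta_{X} = \eta_{Y}f$, where $\eta_{X},\eta_{Y}$ are $n$-anodyne and hence $n$-equivalences. By two-out-of-three, $f$ is an $n$-equivalence \iff $Rf$ is; and since all objects of $\mathrm{L}_{n}\cSet$ are cofibrant and $RX,RY$ are fibrant, an $n$-equivalence between them is a cubical homotopy equivalence, which by \cref{thm:whitehead} is equivalent to $Rf$ inducing isomorphisms on all homotopy groups. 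As $RX$ and $RY$ are $n$-fibrant, \cref{prop:n-fibrant-characterization} makes their homotopy groups vanish above degree $n$, so this is in turn equivalent to $Rf$ inducing isomorphisms on $\pi_{0},\dots,\pi_{n}$. Finally, the square together with the claim (applied to $X$ and $Y$) and two-out-of-three for isomorphisms shows that $Rf$ induces isomorphisms on $\pi_{0},\dots,\pi_{n}$ \iff $f$ does, closing the chain of equivalences. I expect the delicate point to be the relative small object argument behind the claim --- specifically, verifying that Kan-ness of $X$ lets us avoid attaching any cell of dimension $\le n+1$, so that \cref{lem:n-bijective-maps} applies.
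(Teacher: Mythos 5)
Your proof is correct, and its endgame is the same as the paper's: pass to $n$-fibrant replacements, use two-out-of-three together with the fact that $n$-anodyne maps are $n$-equivalences, identify $n$-equivalences between $n$-fibrant objects with homotopy equivalences, and conclude via \cref{thm:whitehead} and the vanishing of homotopy groups above degree $n$ (\cref{prop:n-fibrant-characterization}). Where you genuinely diverge is in how the replacement is built. The paper works with an arbitrary cubical set $X$ directly and constructs $X \to \tilde{X}$ as an infinite alternation: at even stages it attaches fillers for \emph{all} open boxes (an anodyne map, which preserves all homotopy groups by \cref{cor:whitehead}), and at odd stages it attaches fillers for boundary inclusions $\bdcube{k} \hookrightarrow \cube{k}$ with $k \geq n+2$ (a bijection in dimensions $\leq n+1$, which preserves $\pi_{\leq n}$ by \cref{lem:n-bijective-maps}); the preservation of $\pi_{\leq n}$ is thus tracked step by step, for two different reasons at the two kinds of steps. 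You instead first reduce to Kan complexes via Grothendieck fibrant replacement and then run a relative small object argument attaching \emph{only} cells of dimension $\geq n+2$, so that the single map $\eta_X \from X \to RX$ is a bijection in dimensions $\leq n+1$ and \cref{lem:n-bijective-maps} applies to it in one shot. Your crux claim does hold, and your inductive invariant is the right one: every open box of dimension $\leq n+2$ is generated by cubes of dimension $\leq n+1$, hence factors through $X$ by the invariant and is filled inside $X$ by Kan-ness, while the remaining generators (boundaries in dimension $\geq n+2$ and open boxes in dimension $\geq n+3$) only create cells of dimension $\geq n+2$; compactness of the generating cofibrations' domains then gives that $RX$ is $n$-fibrant and that $\eta_X$ is $n$-anodyne, as your lifting construction of $Rf$ requires. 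What each approach buys: yours isolates a clean, reusable statement (every Kan complex admits an $n$-fibrant replacement that is an isomorphism on cubes of dimension $\leq n+1$) at the price of the preliminary reduction and the more delicate ``relative'' cell-attachment bookkeeping; the paper's interleaving avoids any Kan-ness hypothesis and any relative small object argument, at the price of invoking two different preservation arguments alternately and never producing a replacement that is itself a low-dimensional bijection.
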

\begin{proof}
    We begin by fixing some notation.
    Consider the following sequence:
    \[
    \begin{tikzcd}
        X_{0} \arrow[r,"g_{0}"] & X_{1} \arrow[r,"g_{1}"] & X_{2} \arrow[r,"g_{2}"] & \cdots
    \end{tikzcd}
    \]
    where $X_{0} = X$ and the maps $g_{j} \from X_{j} \to X_{j+1}$ are defined by the following pushouts:
    \[
    \begin{tikzcd}
        \displaystyle\coprod_{s \in S_{j}}{\obox{k_{s}}{i_{s},\varepsilon_{s}}} \arrow[d,hook] \arrow[r] \arrow[dr,phantom,"\ulcorner" description, very near end] & X_{j} \arrow[d,"g_{j}"]  \\
        \displaystyle\coprod_{s \in S_{j}}{\cube{k_{s}}} \arrow[r] & X_{j+1}
    \end{tikzcd}
    \]
    where
    \[
    S_{j} = \{ \obox{k}{i,\varepsilon} \to X_{j} \ \mid \ k > 0, \ 1 \leq i \leq k, \ \varepsilon = 0,1 \}
    \]
    if $j$ is even, and
    \[
    \begin{tikzcd}
        \displaystyle\coprod_{t \in T_{j}}{\bdcube{k_{t}}} \arrow[d,hook] \arrow[r] \arrow[dr,phantom,"\ulcorner" description, very near end] & X_{j} \arrow[d,"g_{j}"]  \\
        \displaystyle\coprod_{t \in T_{j}}{\cube{k_{t}}} \arrow[r] & X_{j+1}
    \end{tikzcd}
    \]
    where
    \[
        T_{j} = \{ \bdcube{k} \to X_{j} \ \mid \ k \geq n+2 \}
    \]
    if $j$ is odd.
    
    Let $\tilde{X} := \colim X_{j}$ and let $g \from X \to \tilde{X}$ be the induced map.
    Then, $\tilde{X}$ is $n$-fibrant and the map $g \from X \to \tilde{X}$ is $n$-anodyne.
    Furthermore, each $g_{j} \from X_{j} \to X_{j+1}$ induces an isomorphism on the homotopy groups in degrees $\leq n$.
    Indeed, if $j$ is even, then $g_{j}$ is anodyne and in particular, a weak homotopy equivalence.
    Thus, in this case, it induces an isomorphism on all homotopy groups by \cref{cor:whitehead}.
    If $j$ is odd, then $g_{j}$ is a bijection on the $k$-cubes for each $0 \leq k \leq n+1$.
    Thus, in this case, it induces an isomorphisms on the homotopy groups in degrees $\leq n$ by \cref{lem:n-bijective-maps}.
    It follows that $g \from X \to \tilde{X}$ also induces isomorphisms on the homotopy groups in degrees $\leq n$.

    Applying the same construction to $Y$ instead of $X$, we obtain an $n$-fibrant cubical set $\tilde{Y}$ along with a map $h \from Y \to \tilde{Y}$ that is $n$-anodyne and that induces isomorphisms on the homotopy groups in degrees $\leq n$.
    We also obtain an induced map $\tilde{f} \from \tilde{X} \to \tilde{Y}$ that makes the following square commute:
    \[
    \begin{tikzcd}
        X \arrow[d,"f"'] \arrow[r,"g"] & \tilde{X} \arrow[d,"\tilde{f}"] \\
        Y \arrow[r,"h"] & \tilde{Y}
    \end{tikzcd}
    \]
    
    Since $g$ and $h$ are both $n$-anodyne maps and, in particular, $n$-equivalences (see \cite[Prop.~2.4.25]{cisinski-book}), and since the class of $n$-equivalences satisfies the 2-out-of-3 property, $f$ is an $n$-equivalence if and only if $\tilde{f}$ is an $n$-equivalence.
    Since $\tilde{X}$ and $\tilde{Y}$ are both $n$-fibrant, $\tilde{f}$ is an $n$-equivalence if and only if it is a homotopy equivalence (see \cite[Thm.~2.4.26]{cisinski-book}).
    By \cref{thm:whitehead}, $\tilde{f}$ is a homotopy equivalence if and only if it induces isomorphisms on all homotopy groups.
    Since $g \from X \to \tilde{X}$ and $h \from Y \to \tilde{Y}$ both induce isomorphisms on all homotopy groups in degrees $\leq n$, and since $\tilde{X}$ and $\tilde{Y}$ have trivial homotopy groups in degrees above $n$ (see \cref{prop:n-fibrant-characterization}), $\tilde{f}$ induces isomorphisms on all homotopy groups if and only if $f$ induces isomorphisms on all homotopy groups in degrees $\leq n$.
\end{proof}

The above characterization of $n$-equivalences in $\cSet$ allows us to compare them to the $n$-equivalences in $\sSet$.

\begin{proposition}\label{cor:triangulation-n-equivalences}
    A cubical map $f \from X \to Y$ is an $n$-equivalence in $\cSet$ if and only if its triangulation $Tf \from TX \to TY$ is an $n$-equivalence in $\sSet$.
\end{proposition}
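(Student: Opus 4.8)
The plan is to reduce the statement to the homotopy-group characterizations of $n$-equivalences on both sides, together with the comparison of homotopy groups under triangulation. By \cref{thm:n-equivalences-characterization}, a cubical map $f \from X \to Y$ is an $n$-equivalence in $\cSet$ if and only if $f_{*} \from \pi_{0}X \to \pi_{0}Y$ is a bijection and $f_{*} \from \pi_{k}(X,x) \to \pi_{k}(Y,fx)$ is an isomorphism for every $0 < k \leq n$ and every $0$-cube $x \in X_{0}$. By the definition of $n$-equivalence in $\sSet$ recalled in \cref{sec:simplicial}, the analogous statement characterizes when $Tf \from TX \to TY$ is an $n$-equivalence in $\sSet$, now quantifying over $0$-simplices of $TX$. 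So both sides are phrased entirely in terms of the maps induced on $\pi_{0}$ and on $\pi_{k}$ for $0 < k \leq n$.

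First I would observe that the isomorphisms of \cref{prop:comparison-homotopy-groups} are natural in the cubical set. Choosing functorial fibrant replacements throughout, every step appearing in the proof of that proposition -- passing to the fibrant replacement, geometric realization, triangulation, and the adjunction comparisons -- is functorial, so for each fixed $0 < k \leq n$ the isomorphism $\pi_{k}(X,x) \iso \pi_{k}(TX,x)$ fits into a square commuting with the maps $f_{*}$ and $(Tf)_{*}$, and similarly for $\pi_{0}$. I would also record that $T$ restricts to a bijection $X_{0} \iso (TX)_{0}$ on vertices (since $T(\cube{0}) = \simp{0}$ and $T$ preserves colimits, being a left adjoint), so that a basepoint $x$ on the cubical side corresponds to its image on the simplicial side; this is what makes the two quantifications over basepoints match up.

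Given this, the conclusion is immediate. From the commuting squares, $f_{*} \from \pi_{0}X \to \pi_{0}Y$ is a bijection if and only if $(Tf)_{*} \from \pi_{0}TX \to \pi_{0}TY$ is, and for each $0 < k \leq n$ and each basepoint, $f_{*}$ is an isomorphism on $\pi_{k}$ if and only if $(Tf)_{*}$ is. Combining these equivalences with the characterization of \cref{thm:n-equivalences-characterization} on the cubical side and the definition of $n$-equivalence on the simplicial side yields that $f$ is an $n$-equivalence in $\cSet$ if and only if $Tf$ is an $n$-equivalence in $\sSet$.

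The main obstacle is precisely the naturality claim in the second step: \cref{prop:comparison-homotopy-groups} is stated only as an abstract isomorphism for a single cubical set, so some care is required to verify that the chain of isomorphisms assembled in its proof is genuinely natural in $X$, which relies on using functorial rather than ad hoc fibrant replacements in both $\cSet$ and $\sSet$. Once naturality and the vertex-level bijection are in hand, no further computation is needed.
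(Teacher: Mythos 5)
Your proof is correct and takes essentially the same route as the paper, whose entire proof reads ``This follows from \cref{prop:comparison-homotopy-groups} and \cref{thm:n-equivalences-characterization}.'' The only difference is that you make explicit two points the paper leaves implicit---the naturality of the comparison isomorphisms of \cref{prop:comparison-homotopy-groups} and the bijection $X_{0} \cong (TX)_{0}$ matching up basepoints---both of which are genuine (and correctly resolved) details needed to justify the paper's one-line deduction.
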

\begin{proof}
    This follows from \cref{prop:comparison-homotopy-groups} and \cref{thm:n-equivalences-characterization}.
\end{proof}

\begin{theorem}\label{thm:cisinski-models-are-equivalent}
    The adjunction
    \[
    \begin{tikzcd}
        \mathrm{L}_{n}\cSet
        \arrow[rr,phantom,"\bot" description]
        \arrow[rr,shift left = 1.5ex,"T"] &&
        \arrow[ll, shift left = 1.5ex,"U"]
        \mathrm{L}_{n}\sSet
    \end{tikzcd}
    \]
    is a Quillen equivalence between the Cisinski model category structures for $n$-types of cubical sets and of simplicial sets.
\end{theorem}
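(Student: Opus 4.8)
The plan is to leverage the fact that $T \adj U$ is already a Quillen equivalence between the Grothendieck model structure on $\cSet$ and the Quillen model structure on $\sSet$ (\cref{thm:triangulation}), and to show that this Quillen equivalence descends to the two Bousfield localizations. I would split the argument into two steps: first, that $T \adj U$ remains a Quillen adjunction for the localized structures, and second, that it is in fact a Quillen equivalence. Throughout, the key structural input is that left Bousfield localization does not alter the cofibrations, together with the already-established compatibility of $T$ with $n$-equivalences.

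For the first step, observe that the cofibrations of both $\mathrm{L}_{n}\cSet$ and $\mathrm{L}_{n}\sSet$ are exactly the monomorphisms (\cref{thm:cisinski-model-cubical} and \cref{thm:cisinski-model-simplicial}), since localization leaves the cofibrations unchanged. As $T$ is left Quillen for the unlocalized structures and the cofibrations there are likewise the monomorphisms, $T$ preserves monomorphisms and hence preserves cofibrations in the localized setting. An acyclic cofibration in $\mathrm{L}_{n}\cSet$ is a monomorphism that is an $n$-equivalence; its image under $T$ is a monomorphism by the preceding remark and an $n$-equivalence by \cref{cor:triangulation-n-equivalences}, hence an acyclic cofibration in $\mathrm{L}_{n}\sSet$. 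Thus $T$ preserves acyclic cofibrations, so $T \adj U$ is a Quillen adjunction between the localized structures.

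For the second step I would invoke the standard criterion for a Quillen equivalence. Every object of $\cSet$ and of $\sSet$ is cofibrant, since the map out of the initial (empty) presheaf is always a monomorphism; consequently cofibrant replacement is trivial, and it suffices to check that (a) $T$ reflects $n$-equivalences, and (b) for every fibrant object $Z$ of $\mathrm{L}_{n}\sSet$ the counit $TUZ \to Z$ is an $n$-equivalence. Condition (a) is immediate from \cref{cor:triangulation-n-equivalences}, which shows that $T$ both preserves and reflects $n$-equivalences. For condition (b), note that a fibrant object $Z$ of $\mathrm{L}_{n}\sSet$ is $n$-fibrant and therefore a Kan complex, because the generating $n$-anodyne maps include all horn inclusions. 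Hence $Z$ is fibrant in the Quillen model structure, and since every object is cofibrant, the fact that $T \adj U$ is an unlocalized Quillen equivalence (\cref{thm:triangulation}) identifies the derived counit $TUZ \to Z$ as a weak homotopy equivalence. Finally, every weak homotopy equivalence is an $n$-equivalence — on the cubical side by \cref{cor:whitehead} combined with \cref{thm:n-equivalences-characterization}, and on the simplicial side directly from the definition — so $TUZ \to Z$ is an $n$-equivalence, establishing (b).

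I expect the only real subtlety, rather than a genuine obstacle, to lie in the bookkeeping of the second step: one must observe that fibrant objects in the localized structure are genuinely Kan complexes, so that the \emph{unlocalized} Quillen equivalence can be applied to recognize the counit as a weak homotopy equivalence, and then downgrade this conclusion to an $n$-equivalence. Once this is in place, the remainder of the proof reduces entirely to the invariance of cofibrations under Bousfield localization and the preservation–reflection of $n$-equivalences by $T$ recorded in \cref{cor:triangulation-n-equivalences}.
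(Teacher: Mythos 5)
Your proposal is correct, but it takes a genuinely different route from the paper. The paper's proof is essentially a one-liner: it observes that both model structures are left Bousfield localizations of the respective unlocalized structures at the classes of $n$-equivalences, and then invokes \cref{cor:triangulation-n-equivalences} together with the general theorem on localizing Quillen equivalences, \cite[Thm.~3.3.20]{hirschhorn}, which says that a Quillen equivalence descends to Bousfield localizations at corresponding classes of maps. You instead unpack what that citation black-boxes: you verify by hand that $T \adj U$ is a Quillen adjunction between the localized structures (using that localization preserves cofibrations, that $T$ preserves monomorphisms, and \cref{cor:triangulation-n-equivalences}), and then check the standard fibrant/cofibrant criterion for Quillen equivalences (as in \cite{hovey}), exploiting that all objects are cofibrant, that fibrant objects of $\mathrm{L}_{n}\sSet$ are Kan complexes, and that the counit $TUZ \to Z$ is a weak homotopy equivalence by \cref{thm:triangulation} and hence an $n$-equivalence. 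What each approach buys: the paper's argument is shorter and defers to standard machinery, though it implicitly requires identifying the localization of $\sSet$ at the derived image of the cubical $n$-equivalences with $\mathrm{L}_{n}\sSet$; your argument is self-contained, makes explicit exactly where each ingredient enters (invariance of cofibrations under localization, preservation and reflection of $n$-equivalences by $T$, and the downgrade from weak homotopy equivalence to $n$-equivalence at the counit), and sidesteps the comparison of localizing classes entirely. Both proofs rest on the same two pillars, \cref{thm:triangulation} and \cref{cor:triangulation-n-equivalences}, so the difference is one of packaging rather than of mathematical substance; still, your version would stand on its own without Hirschhorn's localization theorem.
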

\begin{proof}
    We know that the adjunction $T \dashv U$ is a Quillen equivalence between the Grothendieck model category structure on $\cSet$ and the Quillen model category structure on $\sSet$ 
    (see \cref{thm:triangulation}).
    The Cisinski model category structure for $n$-types of cubical sets is the left Bousfield localization of the Grothendieck model category structure on $\cSet$ at the class of $n$-equivalences in $\cSet$, and the Cisinski model category structure for $n$-types of simplicial sets is the left Bousfield localization of the Kan--Quillen model category structure on $\sSet$ at the class of $n$-equivalences in $\sSet$.
    The required result then follows from \cref{cor:triangulation-n-equivalences} and \cite[Thm.~3.3.20]{hirschhorn}.
\end{proof}

\subsection{The transferred model category structure for $n$-types of cubical sets}

We construct a model category structure on $\cSet$, analogous to the model category structure in \cref{thm:transferred-model-simplicial}.

\begin{notation}
    Given any integer $n \geq -1$, let $\boxcat_{\leq n}$ denote the full subcategory of $\boxcat$ on the objects $[1]^{0}, \ldots, [1]^{n}$, and let $\cSet[\leq n]$ denote the category of presheaves over $\boxcat_{\leq n}$.
    The inclusion $i_{n} \from \boxcat_{\leq n} \to \boxcat$ induces a forgetful functor
    \[
        i_{n}^{*} \from \cSet \to \cSet[\leq n].
    \]
    This functor admits both adjoints:
    \[
    \begin{tikzcd}
        \cSet \arrow[rr,"i_{n}^{*}" description] &&  \arrow[ll, bend right,"\bot","{i_{n}}_{*}"']
        \arrow[ll, bend left,"\bot"',"{i_{n}}_{!}"]
        \cSet[\leq n]
    \end{tikzcd}
    \]
    We write $\sk{n}$ and $\cosk{n}$ for the composite functors
    \[
        \sk{n} =  {i_{n}}_{*} \circ i_{n}^{*} \from \cSet \to \cSet \qquad \text{and} \qquad 
        \cosk{n} =  {i_{n}}_{!} \circ i_{n}^{*} \from \cSet \to \cSet
    \]
    respectively.
    These are called the \emph{$n$-skeleton} and the \emph{$n$-coskeleton} functors respectively.
    These in turn form an adjoint pair:
    \[
    \begin{tikzcd}
        \cSet \arrow[rr,phantom,"\bot" description] \arrow[rr,shift left = 1.5ex,"\sk{n}"] &&
        \arrow[ll, shift left = 1.5ex,"\cosk{n}"]
        \cSet.
    \end{tikzcd}
    \]
\end{notation}

Fix an integer $n \geq 0$.
We want to right-transfer the model category structure constructed in \cref{thm:cisinski-model-cubical} along the $(n+1)$-coskeleton functor $\cosk{n+1} \from \cSet \to \cSet$.
Thus, we will be dealing with the following classes of maps:

\begin{definition}[cf.~\protect{\cite[Def.~2.1.3]{hess-kedziorek-riehl-shipley}}]
\label{def:transferred-model}
    \leavevmode
    \begin{enumerate}
        \item A cubical map $f \from X \to Y$ is a \emph{transferred naive $n$-fibration} if $\cosk{n+1}f \from \cosk{n+1}X \to \cosk{n+1}Y$ is a naive $n$-fibration.
        A cubical set $X$ is \emph{transferred $n$-fibrant} if the unique map $X \to \cube{0}$ is a transferred naive $n$-fibration, or equivalently if $\cosk{n+1}X$ is $n$-fibrant.
        \item A cubical map $f \from X \to Y$ is a \emph{transferred $n$-fibration} if $\cosk{n+1}f \from \cosk{n+1}X \to \cosk{n+1}Y$ is an $n$-fibration.
        \item A cubical map $f \from X \to Y$ is a \emph{transferred $n$-cofibration} if it has the left lifting property with respect to every map that is both a transferred $n$-fibration and an $n$-equivalence.
    \end{enumerate}
\end{definition}

We will prove the following theorem:

\begin{theorem}[cf.~\cref{thm:transferred-model-simplicial}]\label{thm:transferred-model-cubical}
    The category $\cSet$ admits a cofibrantly generated model category structure where:
    \begin{itemize}
        \item the cofibrations are the transferred $n$-cofibrations,
        \item the fibrations are the transferred $n$-fibrations, and
        \item the weak equivalences are the $n$-equivalences.
    \end{itemize}
    We refer to this model category structure as the transferred model category structure for $n$-types of cubical sets, and denote it by $\cSet[n]$.
    It is right-transferred from the Cisinski model category structure for $n$-types of cubical sets along the $(n+1)$-coskeleton functor $\cosk{n+1} \from \cSet \to \cSet$.
    Furthermore, the adjunction
    \[
    \begin{tikzcd}
        \mathrm{L}_{n} \cSet \arrow[rr,phantom,"\bot" description] \arrow[rr,shift left = 1.5ex,"\sk{n+1}"] &&
        \arrow[ll, shift left = 1.5ex,"\cosk{n+1}"]
        \cSet[n]
    \end{tikzcd}
    \]
    is a Quillen equivalence between the Cisinski and the transferred models for $n$-types of cubical sets.
\end{theorem}

At first glance, it seems as though we did not transfer the class of $n$-equivalences.
The following couple of results tell us that the class of $n$-equivalences remains unchanged upon transferring along the $(n+1)$-coskeleton functor.

\begin{lemma}[cf.~\cref{lem:unit-coskeleton-simplicial}]\label{lem:unit-coskeleton}
    For every cubical set $X$, the canonical map $\eta_{X} \from X \to \cosk{n+1}X$ is an $n$-equivalence.
\end{lemma}
\begin{proof}
    The map $\eta_{X} \from X \to \cosk{n+1}X$ is a bijection on the $k$-cubes for each $0 \leq k \leq n+1$.
    The required result then follows from \cref{lem:n-bijective-maps} and \cref{thm:n-equivalences-characterization}.
\end{proof}

\begin{proposition}[cf.~\cref{prop:transfer-n-equivalences-simplicial}]\label{prop:transfer-n-equivalences}
    A cubical map $f \from X \to Y$ is an $n$-equivalence if and only if $\cosk{n+1}f \from \cosk{n+1}X \to \cosk{n+1}Y$ is an $n$-equivalence.
\end{proposition}
\begin{proof}
    Consider the naturality square:
    \[
    \begin{tikzcd}
        X \arrow[r,"\eta_{X}"] \arrow[d,"f"'] & \cosk{n+1}X \arrow[d,"\cosk{n+1}f"]  \\
        Y \arrow[r,"\eta_{Y}"] & \cosk{n+1}Y
    \end{tikzcd}
    \]
    By \cref{lem:unit-coskeleton}, both the horizontal maps in the above diagram are $n$-equivalences.
    Since the class of $n$-equivalences satisfies the 2-out-of-3 property, we have the required result.
\end{proof}

Next, we characterize the transferred \emph{naive} $n$-fibrations in terms of their lifting properties.
We will first need the following lemma.

\begin{lemma}[cf. \cref{lem:identities-simplicial}]\label{lem:identities}
    We have the following identities:
    \[
        \sk{n+1}( \bdcube{k} \hookrightarrow \cube{k}) =
        \begin{cases}
            \bdcube{k} \hookrightarrow \cube{k} & \text{if } k \leq n+1 \\
            \mathrm{id}_{\sk{n+1}\cube{k}} & \text{if } k \geq n+2
        \end{cases}
    \]
    and
    \[
        \sk{n+1}( \obox{k}{i,\varepsilon} \hookrightarrow \cube{k}) =
        \begin{cases}
            \obox{k}{i,\varepsilon} \hookrightarrow \cube{k} & \text{if } k \leq n+1 \\
            \obox{n+2}{i,\varepsilon} \hookrightarrow \bdcube{n+2} & \text{if } k = n+2 \\
            \mathrm{id}_{\sk{n+1}\cube{k}} & \text{if } k \geq n+3.
        \end{cases}
    \]
\end{lemma}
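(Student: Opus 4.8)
The plan is to compute both families of maps by skeletal induction for cubical sets, exactly as in the simplicial case (\cref{lem:identities-simplicial}); this is available because $\boxcat$ is an Eilenberg--Zilber category, so that skeletal induction applies just as in the simplicial setting (cf.\ \cite[Thm.~1.3.8]{cisinski-book}). The two facts I will use are: first, that $\sk{n+1}X$ is the subpresheaf of $X$ spanned by those cubes that factor through a cube of dimension $\leq n+1$, so that $\sk{n+1}$ carries a monomorphism $A \hookrightarrow B$ to the induced inclusion $\sk{n+1}A \hookrightarrow \sk{n+1}B$; and second, that the non-degenerate $m$-cubes of $\cube{k}$ are precisely the $m$-dimensional faces, i.e.\ the face inclusions $\cube{m} \hookrightarrow \cube{k}$ obtained by leaving $m$ coordinates free and fixing the remaining $k-m$ to values in $\{0,1\}$. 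The latter follows from the (degeneracy-connection, face) factorization in $\boxcat$: connections and degeneracies contribute only to degenerate cubes, so a non-degenerate cube must coincide with its own inert part, hence is a face.

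First I would record the book-keeping that drives every case: an $m$-dimensional face of $\cube{k}$ fixes $k-m$ of the coordinates, and so is contained in exactly the $k-m$ codimension-one faces of $\cube{k}$ corresponding to those fixed coordinates. In particular, when $m < k$ such a face lies in $\bdcube{k}$, and when $m \leq k-2$ it lies in at least two codimension-one faces and hence in every open box $\obox{k}{i,\varepsilon}$, which omits only a single codimension-one face.

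With this in hand the case analysis is immediate. For $k \leq n+1$ the cubical sets $\cube{k}$, $\bdcube{k}$, and $\obox{k}{i,\varepsilon}$ are all generated in dimensions $\leq n+1$, hence are $(n+1)$-skeletal, so $\sk{n+1}$ returns each inclusion unchanged. For $k \geq n+2$ the only non-degenerate cube of $\cube{k}$ not already in $\bdcube{k}$ is the top $k$-cube, which is discarded by $\sk{n+1}$ since $k > n+1$; by the book-keeping every generating cube of $\sk{n+1}\cube{k}$ (of dimension $\leq n+1 < k$) lies in $\bdcube{k}$, so $\sk{n+1}\cube{k} = \sk{n+1}\bdcube{k}$ and $\sk{n+1}(\bdcube{k} \hookrightarrow \cube{k})$ is an identity. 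For the open boxes with $k \geq n+3$ the same argument applies with $\bdcube{k}$ replaced by $\obox{k}{i,\varepsilon}$, using that every cube of dimension $\leq n+1 \leq k-2$ lies in the open box. Finally, for the borderline value $k = n+2$ the open box $\obox{n+2}{i,\varepsilon}$ is itself $(n+1)$-skeletal, being generated by its $(n+1)$-dimensional faces, while $\sk{n+1}\cube{n+2} = \bdcube{n+2}$ by the previous paragraph, yielding $\sk{n+1}(\obox{n+2}{i,\varepsilon} \hookrightarrow \cube{n+2}) = (\obox{n+2}{i,\varepsilon} \hookrightarrow \bdcube{n+2})$.

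The only genuine subtlety, and the step I would be most careful about, is the characterization of the non-degenerate cubes of $\cube{k}$ as faces in the presence of connections: one must confirm that the factorization system on $\boxcat$ behaves as in the connection-free case, so that no connection produces a non-degenerate low-dimensional cube (for instance a diagonal) that could escape the boundary or the open box. Everything else reduces to counting how many codimension-one faces a given face sits inside, which is exactly parallel to the simplicial computation.
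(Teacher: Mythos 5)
Your proof is correct and takes essentially the same approach as the paper: the paper's own proof is a one-line appeal to skeletal induction for cubical sets via \cite[Thm.~1.3.8]{cisinski-book}, i.e., the Eilenberg--Zilber structure of $\boxcat$, which is precisely the machinery you invoke and then carry out explicitly. Your face-counting bookkeeping and the check that connections produce no exotic non-degenerate cubes are exactly the details the paper leaves to the reader.
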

\begin{proof}
    These identities can be verified using skeletal induction for cubical sets (see, for example, \cite[Thm.~1.3.8]{cisinski-book}).
\end{proof}

\begin{proposition}[cf.~\cref{prop:transfer-naive-n-fibrations-simplicial}]\label{prop:transfer-naive-n-fibrations}
    A cubical map $f \from X \to Y$ is a transferred naive $n$-fibration if and only if it has the right lifting property with respect to the set
    \[
        J_{n}' = \left\{ \obox{k}{i,\varepsilon} \hookrightarrow \cube{k} \ \mid \ 0 < k \leq n+1, \ 1 \leq i \leq k, \ \varepsilon = 0, 1 \right\} \cup \left\{ \obox{n+2}{i,\varepsilon} \hookrightarrow \bdcube{n+2} \ \mid \ 1 \leq i \leq n+2, \ \varepsilon = 0, 1  \right\}.
    \]
\end{proposition}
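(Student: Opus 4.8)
The plan is to follow the proof of the simplicial analog \cref{prop:transfer-naive-n-fibrations-simplicial} essentially verbatim, replacing the skeletal computation with its cubical counterpart from \cref{lem:identities}. The crucial input is the adjunction $\sk{n+1} \adj \cosk{n+1}$: for any cubical maps $i \from A \to B$ and $f \from X \to Y$, the map $f$ has the right lifting property with respect to $\sk{n+1}i$ if and only if $\cosk{n+1}f$ has the right lifting property with respect to $i$. This is simply the transpose of a lifting square across the adjunction.

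First I would unwind the definitions. By definition, $f$ is a transferred naive $n$-fibration precisely when $\cosk{n+1}f$ is a naive $n$-fibration, i.e.\ when $\cosk{n+1}f$ has the right lifting property against every $n$-anodyne map. Since the $n$-anodyne maps are the saturation of the generating set $J_{n}$, and the right lifting property against a class of maps coincides with the right lifting property against any generating set for its saturation, this is equivalent to $\cosk{n+1}f$ having the right lifting property against $J_{n}$ itself.

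Applying the adjunction fact above generator-by-generator, $\cosk{n+1}f$ has the right lifting property against $J_{n}$ if and only if $f$ has the right lifting property against the set $\sk{n+1}J_{n}$ obtained by applying the skeleton functor to each generator. I would then evaluate this set using \cref{lem:identities}: the open-box inclusions $\obox{k}{i,\varepsilon} \hookrightarrow \cube{k}$ with $0 < k \leq n+1$ are left unchanged; the open-box inclusions with $k = n+2$ become $\obox{n+2}{i,\varepsilon} \hookrightarrow \bdcube{n+2}$; and the open-box inclusions with $k \geq n+3$, together with all boundary inclusions $\bdcube{k} \hookrightarrow \cube{k}$ with $k \geq n+2$, become identity maps on $\sk{n+1}\cube{k}$.

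Finally, since every cubical map has the right lifting property against an identity, the generators sent to identities impose no condition and may be discarded, leaving exactly the set $J_{n}'$. This gives the desired characterization. I expect no substantial obstacle: the argument is a routine transposition across the coskeleton adjunction, with all of the genuine content already isolated in \cref{lem:identities}; the only point requiring a moment's care is observing that passing from the $n$-anodyne maps to the generating set $J_{n}$ does not change the associated right-lifting class, and that the identity maps contribute trivially.
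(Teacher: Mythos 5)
Your proposal is correct and follows essentially the same route as the paper's proof: transpose lifting problems across the $\sk{n+1} \dashv \cosk{n+1}$ adjunction and then apply \cref{lem:identities} to identify the transposed generating set with $J_{n}'$. The only difference is that you spell out explicitly the (standard) reduction from the $n$-anodyne maps to the generating set $J_{n}$ and the discarding of identity maps, which the paper leaves implicit.
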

\begin{proof}
    Observe that given two cubical maps $i \from A \to B$ and $f \from X \to Y$, $f$ has the right lifting property with respect to $\sk{n+1}i$ if and only if $\cosk{n+1}f$ has the right lifting property with respect to $i$.
    The required result then follows from \cref{lem:identities}.
\end{proof}

\begin{corollary}\label{cor:Kan-complexes-are-n-fibrant}
    Every Kan complex is a transferred $n$-fibrant cubical set.
\end{corollary}
\begin{proof}
    Let $X$ be a Kan complex.
    Then, for $k > 0$, $0 \leq i \leq k$, and $\varepsilon = 0,1$, every map $\obox{k}{i,\varepsilon} \to X$ can be extended to  a map $\cube{k} \to X$.
    For $k = n+2$, restricting the map $\cube{n+2} \to X$ to $\bdcube{n+2}$ allows us to extend any map $\obox{n+2}{i,\varepsilon} \to X$ to $\bdcube{n+2} \to X$.
    Thus, the unique map $X \to \cube{0}$ is a transferred naive $n$-fibration by \cref{prop:transfer-naive-n-fibrations}.
\end{proof}

The above result will be key to our construction of the fibration category structure for the discrete $n$-types of graphs in \cref{sec:graphs}.

We also have the following characterization of transferred $n$-cofibrations.

\begin{proposition}[cf.~\cref{prop:transfer-n-cofibrations-simplicial}]\label{prop:transfer-n-cofibrations}
    A cubical map $f \from X \to Y$ is both a transferred $n$-fibration and an $n$-equivalence if and only if it has the right lifting property with respect to the set
    \[
        I_{n}' = \left\{ \bdcube{k} \hookrightarrow \cube{k} \ \mid \ 0 \leq k \leq n+1 \right\}.
    \]
    Equivalently, a cubical map $i \from A \to B$ is a transferred $n$-cofibration if and only if it belongs to the saturation of $I_{n}'$.
\end{proposition}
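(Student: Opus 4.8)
The plan is to follow the proof of \cref{prop:transfer-n-cofibrations-simplicial} essentially verbatim, substituting cubical for simplicial data. First I would invoke \cref{prop:transfer-n-equivalences}: since a cubical map $f \from X \to Y$ is an $n$-equivalence exactly when $\cosk{n+1}f$ is, and since by definition $f$ is a transferred $n$-fibration exactly when $\cosk{n+1}f$ is an $n$-fibration, the map $f$ is simultaneously a transferred $n$-fibration and an $n$-equivalence if and only if $\cosk{n+1}f \from \cosk{n+1}X \to \cosk{n+1}Y$ is simultaneously an $n$-fibration and an $n$-equivalence in the Cisinski model structure $\mathrm{L}_{n}\cSet$.

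Next I would appeal to \cref{thm:cisinski-model-cubical}. Because that model structure is cofibrantly generated with the monomorphisms as cofibrations, its trivial fibrations (the maps that are both fibrations and weak equivalences) are precisely the maps having the right lifting property against a generating set of cofibrations, namely the set of all boundary inclusions $I = \{ \bdcube{k} \ito \cube{k} \mid k \geq 0 \}$, and hence against every monomorphism. Thus $\cosk{n+1}f$ is both an $n$-fibration and an $n$-equivalence if and only if it has the right lifting property with respect to $I$.

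I would then transfer the lifting problem across the adjunction, exactly as in the proof of \cref{prop:transfer-naive-n-fibrations}: for any cubical map $i \from A \to B$, the map $\cosk{n+1}f$ has the right lifting property with respect to $i$ if and only if $f$ has the right lifting property with respect to $\sk{n+1}i$. Applying this to each boundary inclusion and evaluating $\sk{n+1}(\bdcube{k} \ito \cube{k})$ by \cref{lem:identities}, the inclusions with $k \geq n+2$ become identity maps, against which every map lifts, and so drop out, while those with $k \leq n+1$ are left unchanged. Hence $f$ has the right lifting property against $I_{n}'$, which gives the first claimed characterization.

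Finally, for the ``equivalently'' clause, I would observe that by definition a transferred $n$-cofibration is a map with the left lifting property against every map that is both a transferred $n$-fibration and an $n$-equivalence; by the first part these are exactly the maps with the right lifting property against $I_{n}'$. Since $\cSet$ is a presheaf category, the small object argument applies to the set $I_{n}'$, and the standard retract argument then identifies the class of maps having the left lifting property against the right class of $I_{n}'$ with the saturation of $I_{n}'$. The only point requiring genuine care is the use in \cref{thm:cisinski-model-cubical} that the generating cofibrations may be taken to be the boundary inclusions; this is the cubical counterpart of the corresponding simplicial fact and follows from the cofibrant generation of the Cisinski model structure for $n$-types of cubical sets. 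The remaining steps are formal lifting-property manipulations, so I expect no substantive obstacle beyond bookkeeping.
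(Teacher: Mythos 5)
Your proposal is correct and takes essentially the same approach as the paper's own proof: reduce via \cref{prop:transfer-n-equivalences} to a statement about $\cosk{n+1}f$, characterize the acyclic fibrations of $\mathrm{L}_{n}\cSet$ by the right lifting property against all boundary inclusions using \cref{thm:cisinski-model-cubical}, transpose the lifting problems across the $\sk{n+1} \dashv \cosk{n+1}$ adjunction, and conclude with \cref{lem:identities}. Your additional spelling-out of the ``equivalently'' clause via the small object argument and the retract argument is a detail the paper leaves implicit, not a divergence in method.
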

\begin{proof}
    By \cref{prop:transfer-n-equivalences}, a map $f \from X \to Y$ is both a transferred $n$-fibration and an $n$-equivalence if and only if
    $\cosk{n+1}f \from \cosk{n+1}X \to \cosk{n+1}Y$ is both an $n$-fibration and an $n$-equivalence.
    By \cref{thm:cisinski-model-cubical}, we know that $\cosk{n+1}f \from \cosk{n+1}X \to \cosk{n+1}Y$ is both an $n$-fibration and an $n$-equivalence
    if and only if it has the right lifting property with respect to the set
    \[
        I = \left\{ \bdcube{k} \hookrightarrow \cube{k} \ \mid \ k \geq 0 \right\}
    \]
    of all boundary inclusions, and hence with respect to every monomorphism.
    Since $\cosk{n+1}f$ has the right lifting property with respect to a cubical map $i \from A \to B$ if and only if $f$ has the right lifting property with respect to $\sk{n+1}i$, the required result then follows from \cref{lem:identities}.
\end{proof}

We will need the following lemma in order to verify the acyclicity condition in \cite[Cor.~3.3.4]{hess-kedziorek-riehl-shipley}.

\begin{lemma}[cf.~\cref{lem:counit-skeleton-simplicial}]\label{lem:counit-skeleton}
    For every cubical set $X$, the canonical map $\varepsilon_{X} \from \sk{n+1}X \to X$ is $n$-anodyne.
\end{lemma}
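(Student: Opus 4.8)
The plan is to mimic the proof of the simplicial analogue, \cref{lem:counit-skeleton-simplicial}, exhibiting $\varepsilon_{X} \from \sk{n+1}X \to X$ as a transfinite composite of $n$-anodyne maps and then invoking saturation. First I would factor $\varepsilon_{X}$ through the skeletal filtration
\[
\begin{tikzcd}
    \sk{n+1}X \arrow[r] & \sk{n+2}X \arrow[r] & \sk{n+3}X \arrow[r] & \cdots
\end{tikzcd}
\]
whose colimit is $X$, so that $\varepsilon_{X}$ is the transfinite composite of the successive skeletal inclusions $\sk{m}X \hookrightarrow \sk{m+1}X$ for $m \geq n+1$. The crux is the cellular description of each step: $\sk{m+1}X$ is obtained from $\sk{m}X$ by attaching the nondegenerate $(m+1)$-cubes of $X$ along their boundaries, i.e.\ as a pushout of a coproduct of boundary inclusions $\bdcube{m+1} \hookrightarrow \cube{m+1}$, indexed by the nondegenerate $(m+1)$-cubes. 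This is exactly the content of the skeletal induction cited in \cite[Thm.~1.3.8]{cisinski-book}, now applied in $\cSet$.

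With this in hand, the rest is a matter of bookkeeping on dimensions. For every $m \geq n+1$ we have $m+1 \geq n+2$, so each attaching map $\bdcube{m+1} \hookrightarrow \cube{m+1}$ lies in the generating set $J_{n}$ and is therefore $n$-anodyne. Since the class of $n$-anodyne maps, being a saturation, is closed under coproducts, pushouts, and transfinite composition, it follows that each $\sk{m}X \hookrightarrow \sk{m+1}X$ is $n$-anodyne and hence so is their transfinite composite $\varepsilon_{X}$.

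The one point requiring genuine care — and the place where the cubical case diverges from the simplicial one — is the cellular pushout decomposition of the skeletal filtration. In $\sSet$ this is the familiar skeletal induction, but in $\cSet$ the presence of connections in addition to faces and degeneracies makes the combinatorics of nondegenerate cubes more delicate. The essential input is that $\boxcat$ is an Eilenberg--Zilber (skeletal) category, so that Cisinski's general skeletal machinery \cite[Thm.~1.3.8]{cisinski-book} applies verbatim and yields the pushouts against $\bdcube{m+1} \hookrightarrow \cube{m+1}$; I would simply cite this rather than reprove it, exactly as done in \cref{lem:identities}. Granting that input, the argument is otherwise identical to the simplicial case.
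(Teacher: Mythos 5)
Your proof is correct and follows exactly the paper's argument: factor $\varepsilon_{X}$ as the transfinite composite $\sk{n+1}X \to \sk{n+2}X \to \cdots$ and observe, via the skeletal induction of \cite[Thm.~1.3.8]{cisinski-book} applied to $\cSet$, that each step is a pushout of boundary inclusions $\bdcube{m+1} \hookrightarrow \cube{m+1}$ with $m+1 \geq n+2$, hence $n$-anodyne by saturation. The paper's proof is terser (it cites the same theorem without spelling out the cellular pushouts), so your write-up simply makes explicit the details the paper leaves implicit.
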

\begin{proof}
    The map $\varepsilon_{X} \from \sk{n+1}X \to X$ is a transfinite composite of the following sequence:
    \[
    \begin{tikzcd}
        \sk{n+1}X \arrow[r] & \sk{n+2}X \arrow[r] & \sk{n+3}X \arrow[r] & \cdots
    \end{tikzcd}
    \]
    and each map in this sequence is $n$-anodyne (see \cite[Thm.~1.3.8]{cisinski-book}).
\end{proof}

\begin{lemma}[cf.~\cref{lem:acyclicity-condition-simplicial}]\label{lem:acyclicity-condition}
    Let $i \from A \to B$ be a cubical map with left lifting property with respect to all transferred $n$-fibrations.
    Then, $i$ is an $n$-equivalence.
\end{lemma}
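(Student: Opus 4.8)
The plan is to reproduce the proof of \cref{lem:acyclicity-condition-simplicial} essentially verbatim, replacing simplicial sets by cubical sets and citing the cubical counterparts of the auxiliary lemmas. The first step is to rewrite the notion of a transferred $n$-fibration as a lifting condition against skeleta. By definition, $f \from X \to Y$ is a transferred $n$-fibration \iff\ $\cosk{n+1}f$ is an $n$-fibration in $\mathrm{L}_{n}\cSet$, i.e.\ \iff\ $\cosk{n+1}f$ has the right lifting property against every map that is both a monomorphism and an $n$-equivalence. Using the adjunction $\sk{n+1} \dashv \cosk{n+1}$ (the same observation used in \cref{prop:transfer-naive-n-fibrations}, namely that $\cosk{n+1}f$ lifts against $j$ \iff\ $f$ lifts against $\sk{n+1}j$), this says that $f$ is a transferred $n$-fibration \iff\ $f$ has the right lifting property against every map of the form $\sk{n+1}j$ with $j \from C \to D$ both a monomorphism and an $n$-equivalence. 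Since $\mathrm{L}_{n}\cSet$ is cofibrantly generated, it suffices to let $j$ range over a generating set of its trivial cofibrations, so that the transferred $n$-fibrations are exactly the maps with the right lifting property against a \emph{set} of the form $\{\sk{n+1}j\}$.

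Next I would verify that each such $\sk{n+1}j$ is itself a monomorphism and an $n$-equivalence. For this, consider the naturality square for the counit $\varepsilon$ of $\sk{n+1} \dashv \cosk{n+1}$:
\[
\begin{tikzcd}
    \sk{n+1}C \arrow[r,"\varepsilon_{C}"] \arrow[d,"\sk{n+1}j"'] & C \arrow[d,"j"]  \\
    \sk{n+1}D \arrow[r,"\varepsilon_{D}"] & D
\end{tikzcd}
\]
By \cref{lem:counit-skeleton}, the horizontal maps $\varepsilon_{C}$ and $\varepsilon_{D}$ are $n$-anodyne, hence both monomorphisms and $n$-equivalences (by \cite[Prop.~2.4.25]{cisinski-book}). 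Since $j$ is assumed to be an $n$-equivalence and the class of $n$-equivalences satisfies 2-out-of-3, the commuting square forces $\sk{n+1}j$ to be an $n$-equivalence; and $\sk{n+1}j$ is a monomorphism because $\sk{n+1}$ preserves monomorphisms (equivalently, because $\varepsilon_{D}\circ\sk{n+1}j = j\circ\varepsilon_{C}$ is a composite of monomorphisms). Thus every $\sk{n+1}j$ lies in the class of maps that are simultaneously monomorphisms and $n$-equivalences.

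Finally I would invoke saturation. By \cref{thm:cisinski-model-cubical} the maps that are both monomorphisms and $n$-equivalences are precisely the trivial cofibrations of $\mathrm{L}_{n}\cSet$, and so form a saturated class. By the first step, a map $i \from A \to B$ has the left lifting property against all transferred $n$-fibrations \iff\ it has the left lifting property against every map with the right lifting property against the set $\{\sk{n+1}j\}$; by the retract argument (using cofibrant generation), this places $i$ in the saturation of $\{\sk{n+1}j\}$. As each generator is a monomorphism and an $n$-equivalence, and that class is saturated, $i$ is itself a monomorphism and an $n$-equivalence — in particular an $n$-equivalence, as required. The argument needs no genuinely new input: the only cubical-specific facts are that $\sk{n+1}$ preserves monomorphisms and that $n$-anodyne cubical maps are monomorphisms and $n$-equivalences, both of which are standard in Cisinski's theory over the cube category, so I do not anticipate any real obstacle; the one point to handle with a little care is the set-level bookkeeping in the saturation step, which is why I would phrase the lifting condition in terms of a generating set of trivial cofibrations of $\mathrm{L}_{n}\cSet$.
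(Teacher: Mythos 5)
Your proposal is correct and takes essentially the same route as the paper's proof: the adjunction trick identifying transferred $n$-fibrations with maps having the right lifting property against $\sk{n+1}j$, the counit naturality square together with \cref{lem:counit-skeleton} and 2-out-of-3 to show each $\sk{n+1}j$ is a monomorphism and an $n$-equivalence, and saturation of that class (\cref{thm:cisinski-model-cubical}) to conclude. Your additional step of passing to a generating set of trivial cofibrations of $\mathrm{L}_{n}\cSet$ and invoking the retract argument is a careful way of making the final saturation step precise at the level of sets rather than classes, but it is a refinement of, not a departure from, the paper's argument.
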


\begin{proof}
    From the definitions, it follows that a cubical map $f \from X \to Y$ is a transferred $n$-fibration if and only it has the right lifting property against $\sk{n+1}j$ for every map $j \from C \to D$ that is both a monomorphism and an $n$-equivalence.
    Consider the naturality square:
    \[
    \begin{tikzcd}
        \sk{n+1}C \arrow[r,"\varepsilon_{C}"] \arrow[d,"\sk{n+1}j"'] & C \arrow[d,"j"]  \\
        \sk{n+1}D \arrow[r,"\varepsilon_{D}"] & D
    \end{tikzcd}
    \]
    By \cref{lem:counit-skeleton}, both the horizontal maps in the above diagram are $n$-anodyne, and hence, both monomorphisms and $n$-equivalences (see \cite[Prop.~2.4.25]{cisinski-book}).
    Thus, if $j$ is a monomorphism and an $n$-equivalence, so is $\sk{n+1}j$.
    Since the class of maps that are both monomorphisms and $n$-equivalences is saturated (see \cref{thm:cisinski-model-cubical}), it follows that any map $i \from A \to B$ that has the left lifting property with respect to all transferred $n$-fibrations is both a monomorphism and an $n$-equivalence.
    That is, we have the following:
    \begin{align*}
        \mathscr{L}{(\text{transferred }n\text{-fibrations})} &= \mathscr{L}(\mathscr{R}(\mathrm{sk}_{n+1}(\text{monos} \cap n\text{-equivalences}))) \\
        &\subseteq \mathscr{L}(\mathscr{R}(\text{monos} \cap n\text{-equivalences})) \\
        &= \text{monos} \cap n\text{-equivalences} \\
        &\subseteq n\text{-equivalences}. \qedhere
    \end{align*}
\end{proof}

\begin{proof}[Proof of \cref{thm:transferred-model-cubical}]
    Since the category of cubical sets is a presheaf category, and thus accessible, \cref{lem:acyclicity-condition} allows us to apply \cite[Cor.~3.3.4]{hess-kedziorek-riehl-shipley} to right-transfer the Cisinski model category structure for $n$-types of cubical sets (see \cref{thm:cisinski-model-cubical}) along the adjunction $\sk{n+1} \dashv \cosk{n+1}$.
    This proves the existence of the transferred model category structure and also tells us that this adjunction is Quillen.
    By \cref{prop:transfer-n-equivalences}, we know that $\cosk{n+1}$ preserves and reflects $n$-equivalences.
    Thus, it suffices to prove that for every cubical set $X$, the unit $X \to \cosk{n+1}\sk{n+1}X$ of the adjunction $\sk{n+1} \dashv \cosk{n+1}$ is an $n$-equivalence.
    Observe that the functor $i_{n+1}^{*} \circ {i_{n+1}}_{*} \from \cSet[\leq n+1] \to \cSet[\leq n+1]$ equals the identity functor on $\cSet[\leq n+1]$.
    It follows that
    \[
        \cosk{n+1} \circ \sk{n+1} = {i_{n+1}}_{!} \circ i_{n+1}^{*} \circ {i_{n+1}}_{*} \circ i_{n+1}^{*} = {i_{n+1}}_{!} \circ i_{n+1}^{*} = \cosk{n+1}.
    \]
    The required result then follows from \cref{lem:unit-coskeleton}.
\end{proof}

Having constructed the cubical analogue of the Elvira--Hernandez model category structure of \cref{thm:transferred-model-simplicial}, we now take our cue from \cref{remark:transferring-makes-naivete-redundant-simplicial} where we saw that, in the simplicial setting, every transferred naive $n$-fibration is a transferred $n$-fibration, and vice-versa.
It is straightforward to see, even in the cubical setting, that every transferred $n$-fibration is a transferred naive $n$-fibration.
This follows directly from the the definitions and the observation that every $n$-anodyne map is both a monomorphism and an $n$-equivalence (see \cite[Prop.~2.4.25]{cisinski-book}).
However, proving that every transferred naive $n$-fibration is a transferred $n$-fibration requires some more machinery, viz. the theory of minimal fibrations as developed in \cite[\S~5.1]{cisinski-book} and specifically applied to the Grothendieck model category structure of \cref{thm:grothendieck}.
We briefly recall the definition and some important properties.

\begin{definition}
\leavevmode
\begin{enumerate}
    \item Let $p \from X \to Y$ be a map in $\cSet$ and let $u, v \from \cube{k} \to X$ be two $k$-cubes such that $p \circ u = p \circ v$ and $u \vert_{\bdcube{k}} = v|_{\bdcube{k}}$.
    We say that $u$ is \emph{fiberwise homotopic} to $v$ \emph{mod} $\bdcube{k}$, written $u \sim_{p} v \mod \bdcube{k}$, if there exists a homotopy $H \from \cube{k} \otimes \cube{1} \to X$ from $u$ to $v$ such that the following diagrams commute:
    \[
    \begin{tikzcd}
        \cube{k} \otimes \cube{1} \arrow[r,"H"] \arrow[d] & X \arrow[d,"p"] \\
        \cube{k} \arrow[r,"p \circ u = p \circ v"] & Y
    \end{tikzcd}
    \qquad
    \text{and}
    \qquad
    \begin{tikzcd}
        \bdcube{k} \otimes \cube{1} \arrow[r,hook] \arrow[d] & \cube{k} \otimes \cube{1} \arrow[d,"H"] \\
        \bdcube{k} \arrow[r,"u \mid_{\bdcube{k}} = v \mid_{\bdcube{k}}"] & X\text{.}
    \end{tikzcd}
    \]
    \item A \emph{minimal Kan fibration} $p \from X \to Y$ is a Kan fibration (in the sense of \cref{thm:grothendieck}) such that, for any two $k$-cubes $u, v \from \cube{k} \to X$, $k \geq 0$, such that $p \circ u = p \circ v$ and $u \mid_{\bdcube{k}} = v \mid_{\bdcube{k}}$, if we have $u \sim_{f} v \mod \bdcube{k}$, then we must have $u = v$.
\end{enumerate}
\end{definition}

\begin{proposition}[\protect{\cite[Prop.~5.1.15]{cisinski-book}}] \label{prop:minimal-fibrations-stable-under-pullbacks}
    The class of minimal Kan fibrations in $\cSet$ is stable under pullbacks.
    \qed
\end{proposition}

\begin{theorem}[\protect{\cite[Thm.~5.1.17]{cisinski-book}}] \label{thm:existence-of-minimal-models}
    Every Kan fibration $p \from X \to Y$ in $\cSet$ can be factored as trivial Kan fibration $r \from X \to S$ followed by a minimal Kan fibration $q \from S \to Y$.
    \qed
\end{theorem}

\begin{proposition}[\protect{\cite[Prop.~5.1.19]{cisinski-book}}] \label{prop:weak-eqs-between-two-minimal-fibrations}
    Given any commutative triangle of the form
    \[
    \begin{tikzcd}[column sep = small]
        X \arrow[rr,"f"] \arrow[dr,"p"'] && X' \arrow[dl,"p'"] \\
        & Y &
    \end{tikzcd}
    \]
    where $p$ and $p'$ are both minimal Kan fibrations, if $f$ is a weak homotopy equivalence, then it must be an isomorphism.
    \qed
\end{proposition}

In addition to the above results, we will also require the following lemma about minimal Kan fibrations.

\begin{lemma} \label{lem:extending-minimal-Kan-fibrations}
    Let $p \from X \to Y$ be a minimal Kan fibration and let $i \from Y \to Z$ be a trivial cofibration.
    Then, there exists a pullback square of the form
    \[
    \begin{tikzcd}
        X \arrow[r,"j"] \arrow[d,"p"'] \arrow[dr,phantom,"\lrcorner" description, very near start] & W \arrow[d,"q"] \\
        Y \arrow[r,"i"] & Z
    \end{tikzcd}
    \]
    where $j \from X \to W$ is a trivial cofibration and $q \from W \to Z$ is a Kan fibration.
\end{lemma}
\begin{proof}
    We can factor the composite $i \circ p \from X \to Z$ as a trivial cofibration $j' \from X \to W'$ followed by a Kan fibration $q' \from W' \to Z$.
    By \cref{thm:existence-of-minimal-models}, we can further factor $q' \from W' \to Z$ as a trivial fibration $r \from W' \to W$ followed by a minimal Kan fibration $q \from W \to Z$.
    Setting $j = r \circ j'$, we have the following commutative square:
    \[
    \begin{tikzcd}
        X  \arrow[r,"j"] \arrow[d,"p"'] & W \arrow[d,"q"] \\
        Y \arrow[r,"i"] & Z
    \end{tikzcd}
    \]
    Consider the pullback of $q \from W \to Z$ and $i \from Y \to Z$.
    Since $q$ is a Kan fibration and $i$ is a weak homotopy equivalence, it follows from the right properness of the Grothendieck model category structure on $\cSet$ that the map $q^{*}i \from Y \times_{Z} W \to W$ is a weak homotopy equivalence.
    The map $j \from X \to W$, which is a composite of the trivial cofibration $j'$ followed by the trivial fibration $r$, is also a weak homotopy equivalence.
    Thus, by 2-out-of-3, the induced map $X \to Y \times_{Z} W$ is also a weak homotopy equivalence.
    Since both $p \from X \to Y$ and $i^{*}q \from Y \times_{Z} W$ are minimal Kan fibrations (the first by hypothesis and the second by \cref{prop:minimal-fibrations-stable-under-pullbacks}).
    By \cref{prop:weak-eqs-between-two-minimal-fibrations}, the induced map $X \to Y \times_{Z} W$ must be an isomorphism.
    That is, the above commutative square is, in fact, a pullback square.
    Finally, since monomorphisms are preserved under pullbacks, $j \from X \to W$ is a monomorphism in addition to being a weak homotopy equivalence, i.e., $j$ is a trivial cofibration.
\end{proof}

We now return to the task of proving that every transferred naive $n$-fibration is, in fact, a transferred $n$-fibration.
We proceed in three steps, the first of which is a technical lemma.

\begin{lemma}
\label{lem:keystone-lemma}
    Let $X, Y \in \cSet$ be Kan complexes, and $f \from X \to Y$ be both a Kan fibration and an $n$-equivalence.
    Then, $f$ has the right lifting property with respect to the set
    \[
        I_{n}' = \left\{\bdcube{k} \hookrightarrow \cube{k} \ \mid \ 0 \leq k \leq n+1\right\}\text{.}
    \]
\end{lemma}
\begin{proof}
    \cite[Lem.~4.9]{carranza-kapulkin:homotopy-groups} tells us that $f$ has the right lifting property with respect to $\varnothing \hookrightarrow \cube{0}$.
    Let $0 < k \leq n+1$, and consider a lifting problem as follows:
    \[
    \begin{tikzcd}
        \bdcube{k} \arrow[r,"u"] \arrow[d,hook] & X \arrow[d,"f"] \\
        \cube{k} \arrow[r,"v"] & Y.
    \end{tikzcd}
    \]
    \cite[Thm.~3.22]{carranza-kapulkin:homotopy-groups} tells us that the map $u \from \bdcube{k} \to X$ represents an element in the kernel of the group homomorphism $f_{*} \from \pi_{k-1}(X,x) \to \pi_{k-1}(Y,fx)$, where $x = u(0,\ldots,0)$.
    Since $f_{*}$ is an isomorphism by hypothesis, $u$ must represent the trivial element in $\pi_{k-1}(X,x)$.
    Thus, there exist maps $H \from \bdcube{k} \otimes \cube{1} \to X$ and $G \from \cube{k} \otimes \cube{1} \to Y$ such that $H\partial^{0}_{\bdcube{k}} = u$, $H\partial^{1}_{\bdcube{k}} = \mathrm{const}_{x}$, $G\partial^{0}_{\cube{k}} = v$, $G\partial^{1}_{\cube{k}} = \mathrm{const}_{fx}$, and $f H = G \mid_{\bdcube{k} \otimes \cube{1}}$.
    Therefore, we can define the following lifting problem:
    \[
    \begin{tikzcd}
        \obox{k+1}{k,1} \cong (\bdcube{k} \otimes \cube{1}) \cup (\cube{k} \otimes \left\{0\right\}) \arrow[r,"{[H,v]}"] \arrow[d,hook] & X \arrow[d,"f"] \\
        \cube{k+1} \cong \cube{k} \otimes \cube{1} \arrow[r,"G"] \arrow[ur,dotted] & Y
    \end{tikzcd}
    \]
    Since $f$ is a Kan fibration, the above lifting problem admits a solution $s \from \cube{k} \otimes \cube{1} \to X$.
    Then, $s \partial^{0}_{\cube{k}} \from \cube{k} \to X$ is a solution to the original lifting problem.
\end{proof}

Equipped with the above lemma and the theory of minimal Kan fibrations recalled before that, we can prove a special case of the general result we seek.

\begin{proposition}
\label{prop:transfer-n-fibrations-v1}
    Suppose $f \from X \to Y$ is both a transferred naive $n$-fibration and an $n$-equivalence.
    Then, $f$ is also a transferred $n$-fibration.
\end{proposition}
\begin{proof}
    By \cref{def:transferred-model} and \cref{prop:transfer-n-equivalences}, $\cosk{n+1}f$ is both a naive $n$-fibration and an $n$-equivalence.
    In particular, $\cosk{n+1}f$ is a Kan fibration.
    By \cref{thm:existence-of-minimal-models}, we can factor $\cosk{n+1}f$ as a trivial Kan fibration $r \from \cosk{n+1}E \to S$ followed by a minimal Kan fibration $q \from S \to \cosk{n+1}B$.
    By 2-out-of-3, $q$ must also be an $n$-equivalence.
    Let $\cosk{n+1}B \to \tilde{\cosk{n+1}B}$ be the fibrant replacement of $\cosk{n+1}B$ in the Grothendieck model category structure on $\cSet$.
    By \cref{lem:extending-minimal-Kan-fibrations}, there exists a pullback square of the form
    \[
    \begin{tikzcd}
        S \arrow[r] \arrow[d,"q"'] \arrow[dr,phantom,"\lrcorner" description, very near start] & \tilde{S} \arrow[d,"\tilde{q}"] \\
        \cosk{n+1}B \arrow[r] & \tilde{\cosk{n+1}B}
    \end{tikzcd}
    \]
    where $S \to \tilde{S}$ is a trivial cofibration and $\tilde{q} \from \tilde{S} \to \tilde{\cosk{n+1}B}$ is a Kan fibration.
    By 2-out-of-3, $\tilde{q}$ must be an $n$-equivalence.
    By \cref{lem:keystone-lemma}, $\tilde{q}$ has the right lifting property with respect to $I_{n}'$.
    
    Since the class $\mathscr{R}(I_{n}')$ of all maps having the right lifting property with respect to $I_{n}'$ is stable under pullbacks, $q$ is also contained in it.
    The map $r \from \cosk{n+1}E \to S$ is a trivial Kan fibration and so, is contained in $\mathscr{R}(I_{n}')$.
    Thus, the composite $\cosk{n+1}f = q \circ r$ is also contained in the class $\mathscr{R}(I_{n}')$, which implies, by transposition and \cref{lem:identities}, that $f$ is also contained in the class $\mathscr{R}(I_{n}')$.
    We then use \cref{prop:transfer-n-cofibrations} to conclude that $p$ is a transferred $n$-fibration, in addition to being an $n$-equivalence.
\end{proof}

We can finally prove the general result.

\begin{proposition}\label{prop:transferring-makes-naivete-redundant}
    Suppose $i \from A \to B$ is both a transferred $n$-cofibration and an $n$-equivalence.
    Then, it is contained in the saturation of
    \[
        J_{n}' = \left\{ \obox{k}{i,\varepsilon} \hookrightarrow \cube{k} \ \mid \ 0 < k \leq n+1, \ 1 \leq i \leq k, \ \varepsilon = 0, 1 \right\} \cup \left\{ \obox{n+2}{i,\varepsilon} \hookrightarrow \bdcube{n+2} \ \mid \ 1 \leq i \leq n+2, \ \varepsilon = 0, 1  \right\}\text{.}
    \]
    Equivalently, every transferred naive $n$-fibration is a transferred $n$-fibration.
\end{proposition}
\begin{proof}
    Applying the small object argument to $J_{n}'$, we can factor $i \from A \to B$ as a map $j \from A \to E$ belonging to the saturation of $J_{n}'$ followed by a transferred naive $n$-fibration $p \from E \to B$.
    Note that $j \from A \to E$ is both a monomorphism and an $n$-equivalence.
    By 2-out-of-3, $p \from E \to B$ is also an $n$-equivalence.
    By \cref{prop:transfer-n-fibrations-v1}, $p$ is a transferred $n$-fibration and hence, has the right lifting property with respect to $i$.
    Thus, by the usual retract argument, $i$ must be a retract of $j$ and so, it is also contained in the saturation of $J_{n}'$.
\end{proof}

\section{Application to discrete homotopy theory} \label{sec:graphs}

In this section, we use the results of the previous section to construct a fibration category structure on the category of graphs for discrete homotopy $n$-types of graphs and prove that for $n=1$, it is equivalent to the fibration category structure on the category of groupoids.
Fibration categories, introduced by Brown \cite{brown} and studied extensively by Radulescu-Banu \cite{radulescu-banu} and Szumi{\l}o \cite{szumilo-cofibration-categories} are a weakening of the notion of a model category designed to model homotopy theories with finite limits \cite{szumilo-complete-quasicategories}.

We begin by reviewing the requisite background on discrete homotopy theory, including the construction of the nerve functor from graphs to cubical sets.
After that, we prove the existence of the fibration category structure  for discrete homotopy $n$-types (\cref{thm:fib-cat-Graph-n}), and show that, for $n=1$, it is equivalent to the fibration category of groupoids (\cref{thm:fund-gpd-is-a-weak-equiv}).

\subsection{Background on discrete homotopy theory}

We begin by introducing graphs and graph maps.

\begin{definition} \leavevmode
\begin{enumerate}
    \item A \emph{graph} $X = (\V{X},\E{X})$ is a set $\V{X}$ equipped with a reflexive and symmetric relation $\E{X} \subseteq \V{X} \times \V{X}$.
    Elements of $\V{X}$ are called \emph{vertices} of the graph, and two vertices $x$ and $x'$ in $X$ are connected by an \emph{edge} if $(x,x') \in \E{X}$, in which case we write $x \sim x'$ and say $x$ and $x'$ are adjacent.
    In our figures, we suppress the unique loop present at each vertex, and represent each pair of oppositely directed edges by a single, undirected edge.

    \item A \emph{graph map} $f \from X \to Y$ is a set-function $f \from \V{X} \to \V{Y}$ that preserves the relation.
    In particular, since our graphs are reflexive, this allows edges in the domain to be collapsed to a single vertex in the codomain.
    We write $\Graph$ for the the category of graphs and graph maps.
\end{enumerate}
\end{definition}

\begin{example}\label{def:graph-examples} \leavevmode
\begin{enumerate}
    \item The \emph{finite interval} graph $I_{n}$ of length $n \in \mathbb{N}$ has, as vertices, the integers $0, 1, \ldots, n$, and an edge $i \sim j$ whenever $\lvert i - j \rvert \leq 1$.
    \begin{figure}[H]
    \centering
    \begin{minipage}{0.09\textwidth}
    \centering
    \begin{tikzpicture}[colorstyle/.style={circle, draw=black, fill=black, thick, inner sep=0pt, minimum size=1 mm, outer sep=0pt},scale=1.5]
        \node (0) at (0,0) [colorstyle, label = above: {$0$}] {};
        \node at (0,-0.5) [anchor = south]{$I_{0}$};
    \end{tikzpicture}
    \end{minipage}
    \begin{minipage}{0.19\textwidth}
    \centering
    \begin{tikzpicture}[colorstyle/.style={circle, draw=black, fill=black, thick, inner sep=0pt, minimum size=1 mm, outer sep=0pt},scale=1.5]
        \node (0) at (0,0) [colorstyle, label = above: {$0$}] {};
        \node (1) at (1,0) [colorstyle, label = above: {$1$}] {};
        \draw [thick] (0) -- (1);
        \node at (0.5,-0.5) [anchor = south]{$I_{1}$};
    \end{tikzpicture}
    \end{minipage}
    \begin{minipage}{0.29\textwidth}
    \centering
    \begin{tikzpicture}[colorstyle/.style={circle, draw=black, fill=black, thick, inner sep=0pt, minimum size=1 mm, outer sep=0pt},scale=1.5]
        \node (0) at (0,0) [colorstyle, label = above: {$0$}] {};
        \node (1) at (1,0) [colorstyle, label = above: {$1$}] {};
        \node (2) at (2,0) [colorstyle, label = above: {$2$}] {};
        \draw [thick] (0) -- (2);
        \node at (1,-0.5) [anchor = south]{$I_{2}$};
    \end{tikzpicture}
    \end{minipage}
    \begin{minipage}{0.39\textwidth}
    \centering
    \begin{tikzpicture}[colorstyle/.style={circle, draw=black, fill=black, thick, inner sep=0pt, minimum size=1 mm, outer sep=0pt},scale=1.5]
        \node (0) at (0,0) [colorstyle, label = above: {$0$}] {};
        \node (1) at (1,0) [colorstyle, label = above: {$1$}] {};
        \node (2) at (2,0) [colorstyle, label = above: {$2$}] {};
        \node (3) at (3,0) [colorstyle, label = above: {$3$}] {};
        \draw [thick] (0) -- (3);
        \node at (1.5,-0.5) [anchor = south]{$I_{3}$};
    \end{tikzpicture}
    \end{minipage}
    \end{figure}
    
    \item The \emph{infinite interval} graph $I_{\infty}$ has, as vertices, all integers $i \in \mathbb{Z}$, and an edge $i \sim j$ whenever $\lvert i - j \rvert \leq 1$.
    \begin{figure}[H]
    \centering
    \begin{minipage}{0.9\textwidth}
    \centering
    \begin{tikzpicture}[colorstyle/.style={circle, draw=black, fill=black, thick, inner sep=0pt, minimum size=1 mm, outer sep=0pt},scale=1.5]
       \node (-etc) at (0,0) {};
       \node (-1) at (1,0) [colorstyle, label = above: {$-1$}] {};
       \node (0) at (2,0) [colorstyle, label = above: {$0$}] {};
       \node (1) at (3,0) [colorstyle, label = above: {$1$}] {};
       \node (2) at (4,0) [colorstyle, label = above: {$2$}] {};
       \node (i) at (5,0) [colorstyle, label = above: {$i$}] {};
       \node (i+1) at (6,0) [colorstyle, label = above: {$i+1$}] {};
       \node (etc) at (7,0) {};
       \draw [loosely dotted] (-etc) -- (-1);
       \draw [thick] (-1) -- (0);
       \draw [thick] (0) -- (1);
       \draw [thick] (1) -- (2);
       \draw [loosely dotted] (2) -- (i);
       \draw [thick] (i) -- (i+1);
       \draw [loosely dotted] (i+1) -- (etc);
       \node at (3.5,-0.5) [anchor = south]{$I_{\infty}$};
   \end{tikzpicture}
   \end{minipage}
   \end{figure}
    \item The \emph{$n$-cycle} graph $C_{n}$ has, as vertices, the integers mod $n$, and an edge $i \sim j$ whenever $\lvert i - j \rvert \leq 1$.
    \begin{figure}[H]
    \centering
    \begin{minipage}{0.3\textwidth}
    \centering
    \begin{tikzpicture}[colorstyle/.style={circle, draw=black, fill=black, thick, inner sep=0pt, minimum size=1 mm, outer sep=0pt},scale=1.5]
        \node (0) at (0.5,0.866) [colorstyle, label = above: {$0$}] {};
        \node (1) at (1,0) [colorstyle, label = below right: {$1$}] {};
        \node (2) at (0,0) [colorstyle, label = below left: {$2$}] {};
        \draw [thick] (0) -- (1);
        \draw [thick] (1) -- (2);
        \draw [thick] (2) -- (0);
        \node at (0.5,-0.75) [anchor = south]{$C_{3}$};
    \end{tikzpicture}
    \end{minipage}
    \begin{minipage}{0.3\textwidth}
    \centering
    \begin{tikzpicture}[colorstyle/.style={circle, draw=black, fill=black, thick, inner sep=0pt, minimum size=1 mm, outer sep=0pt},scale=1.5]
        \node (0) at (0,1) [colorstyle, label = above left: {$0$}] {};
        \node (1) at (1,1) [colorstyle, label = above right: {$1$}] {};
        \node (2) at (1,0) [colorstyle, label = below right: {$2$}] {};
        \node (3) at (0,0) [colorstyle, label = below left: {$3$}] {};
        \draw [thick] (0) -- (1);
        \draw [thick] (1) -- (2);
        \draw [thick] (2) -- (3);
        \draw [thick] (3) -- (0);
        \node at (0.5,-0.75) [anchor = south]{$C_{4}$};
    \end{tikzpicture}
    \end{minipage}
    \begin{minipage}{0.3\textwidth}
    \centering
    \begin{tikzpicture}[colorstyle/.style={circle, draw=black, fill=black, thick, inner sep=0pt, minimum size=1 mm, outer sep=0pt},scale=1.5]
        \node (0) at (0.809,1.539) [colorstyle, label = above : {$1$}] {};
        \node (1) at (1.618,0.951) [colorstyle, label = right: {$2$}] {};
        \node (2) at (1.309,0) [colorstyle, label = below right: {$3$}] {};
        \node (3) at (0.309,0) [colorstyle, label = below left: {$4$}] {};
        \node (4) at (0,0.951) [colorstyle, label = left: {$0$}] {};
        \draw [thick] (0) -- (1);
        \draw [thick] (1) -- (2);
        \draw [thick] (2) -- (3);
        \draw [thick] (3) -- (4);
        \draw [thick] (4) -- (0);
        \node at (0.809,-0.75) [anchor = south]{$C_{5}$};
    \end{tikzpicture}
    \end{minipage}
    \end{figure}
\end{enumerate}
\end{example}

\begin{definition}\label{def:box-product}
    The \emph{box product} $X \gtimes Y$ of two graphs $X$ and $Y$ is defined as follows:
    \begin{align*}
        \V{(X \gtimes Y)} &= \V{X} \times \V{Y} \\
        \E{(X \gtimes Y)} &= \left\{ \left(x,y\right) \sim \left(x',y'\right) \ \middle\vert \
        \begin{aligned}
            \text{either } & x \sim x' \text{ and }    y = y' \\
            \text{or } & x = x' \text{ and } y \sim y'
        \end{aligned} \right\}
    \end{align*}
    This defines a monoidal structure on $\Graph$.
\end{definition}

\begin{definition}\leavevmode
\begin{enumerate}
    \item Given two graph maps $f, g \from X \to Y$, an \emph{$A$-homotopy} $H \from f \Rightarrow g$ of length $n \in \mathbb{N}$ is a map $H \from X \gtimes I_{n} \to Y$ such that $H\left(-,0\right) = f$ and $H\left(-,n\right) = g$.
    When such an $A$-homotopy exists, we say that $f$ and $g$ are $A$-homotopic and write $f \sim_{A} g$.
    This defines an equivalence relation on the set $\Graph\left(X,Y\right)$.
    \item A graph map $f \from X \to Y$ is an \emph{$A$-homotopy equivalence} if there exists a graph map $g \from X \to Y$ such that $g \circ f \sim_{A} \mathrm{id}_{X}$ and $f \circ g \sim_{A} \mathrm{id}_{Y}$.
\end{enumerate}
\end{definition}

\begin{example}
    The unique map $I_{n} \to I_{0}$ is an $A$-homotopy equivalence for every $n \in \mathbb{N}$, whereas the unique map $I_{\infty} \to I_{0}$ is not.
\end{example}

\begin{definition}
For $n > 0$, the \emph{$n$th discrete homotopy group} $A_{n}(X,x)$ of a pointed graph $(X,x)$ is given by:
\[
    A_{n}(X,x) = \left\{ f \from \gexp{I_{\infty}}{n} \to X \ \mid \ f(\mathbf{i}) = x_{0} \text{ for all but finitely many } \mathbf{i} \in \gexp{I_{\infty}}{n}\right\} / \sim_{\ast}
\]
where we are quotienting by $A$-homotopies that also take the value $x$ for all but finitely many vertices in $\gexp{I_{\infty}}{n+1}$.
The group operation is given by concatenating the finite non-constant regions.
\end{definition}

\begin{definition}
    A graph map $f \from X \to Y$ is a \emph{weak $A$-homotopy equivalence} if it induces a bijection $f_{*} \from \pi_{0}X \to \pi_{0}Y$ between the path components of $X$ and $Y$, and an isomorphism $f_{*} \from A_{n}(X,x) \to A_{n}(Y,fx)$ for every $n > 0$ and every vertex $x \in X$.
\end{definition}

\begin{example}
    It can be shown that all cycle graphs $C_{n}$ are weakly $A$-homotopy equivalent for $n \geq 5$.
    Furthermore, the unique map $I_{\infty} \to I_{0}$ is a weak $A$-homotopy equivalence.
\end{example}

\begin{definition} \leavevmode
\begin{enumerate}
    \item Given a graph $X$ and an integer $n \geq 0$, a graph map $f \from \gexp{I_{\infty}}{n} \to X$ is \emph{stable in all directions} if there exists an integer $M \geq 0$ such that for each $i = 1,\ldots, n$ and $\varepsilon = 0,1$ we have:
    \[
        f(t_{1},\ldots,t_{i-1},(2\varepsilon-1)t_{i},t_{i+1},\ldots,t_{n}) = f(t_{1},\ldots,t_{i-1},(2\varepsilon-1)M,t_{i+1},\ldots,t_{n})
    \]
    whenever $t_{i} \geq M$.
    \item The \emph{nerve} $\gnerve X$ of a graph $X$ is a cubical set whose $n$-cubes are given by:
    \[
        (\gnerve X)_{n} = \left\{f \from \gexp{I_{\infty}}{n} \to X \ \mid \ f \text{ is stable in all directions} \right\}
    \]
    and whose cubical operators are given as follows:
    \begin{itemize}
        \item the map $\partial_{i,\varepsilon} \from (\gnerve X)_{n} \to (\gnerve X)_{n-1}$ for $i = 1, \ldots, n$ and $\varepsilon = 0,1$ is given by
        \[
            f\partial_{i,\varepsilon} (t_{1},\ldots,t_{n-1}) = f(t_{1},\ldots,t_{i-1},(2\varepsilon-1)M,t_{i},\ldots,t_{n-1});
        \]
        where $M \geq 0$ is some integer such that for each $i = 1,\ldots, n$ and $\varepsilon = 0,1$ we have:
        \[
            f(t_{1},\ldots,t_{i-1},(2\varepsilon-1)t_{i},t_{i+1},\ldots,t_{n}) = f(t_{1},\ldots,t_{i-1},(2\varepsilon-1)M,t_{i+1},\ldots,t_{n})
        \]
        whenever $t_{i} \geq M$.
        \item the map $\sigma_{i} \from (\gnerve X)_{n} \to (\gnerve X)_{n+1}$ for $i = 1, \ldots, n+1$ is given by
        \[
            f\sigma_{i} (t_{1},\ldots,t_{n+1}) = f(t_{1},\ldots,t_{i-1},t_{i+1},\ldots,t_{n+1});
        \]
        \item the map $\gamma_{i,\varepsilon} \from (\gnerve X)_{n+1} \to (\gnerve X)_{n}$ for $i = 1, \ldots, n$ and $\varepsilon = 0, 1$ is given by
        \[
            f\gamma_{i,\varepsilon} (t_{1},\ldots,t_{n+1}) =
            \begin{cases}
                f(t_{1},\ldots,t_{i-1},\max{(t_{i},t_{i+1})},t_{i+2},\ldots,t_{n+1}) & \text{if } \varepsilon = 0, \\
                f(t_{1},\ldots,t_{i-1},\min{(t_{i},t_{i+1})},t_{i+2},\ldots,t_{n+1}) & \text{if } \varepsilon = 1.
            \end{cases}
        \]
    \end{itemize}
    This defines the \emph{nerve functor} $\gnerve \from \Graph \to \cSet$.
\end{enumerate}
\end{definition}

\begin{theorem}\label{thm:cubical-nerve-of-graphs} \leavevmode
\begin{enumerate}
    \item The nerve functor $\gnerve \from \Graph \to \cSet$ preserves all finite limits.
    \item The nerve $\gnerve X$ of any graph $X$ is a Kan complex.
    \item We have a bijection $\pi_{0}X \cong \pi_{0} \gnerve X$ for every $X \in \Graph$ and an isomorphism $A_{n}(X,x) \cong \pi_{n}(\gnerve X,x)$ for every $(X,x) \in \Graph_{*}$ and every $n > 0$.
\end{enumerate}
\end{theorem}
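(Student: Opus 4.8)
Finite limits in $\cSet$ are computed levelwise, so my plan is to show that for each $n \geq 0$ the functor $X \mapsto (\gnerve X)_{n}$ preserves finite limits. This functor is the subfunctor of the representable functor $\Graph(I_{\infty}^{\gtimes n}, -)$ cut out by the stability-in-all-directions condition, and representable functors preserve all limits; hence it suffices to check that stability is detected by finite limits in the target. Concretely, a map $f \from I_{\infty}^{\gtimes n} \to X \times Y$ is stable in all directions if and only if each of its two components is, since one may take the maximum of the two (finitely many) stability bounds $M$; and a map into an equalizer is stable if and only if it is stable as a map into the ambient graph, because the equalizer is a full subgraph. The terminal graph $I_{0}$ goes to the terminal cubical set $\cube{0}$, so $\gnerve$ preserves terminal objects, finite products and equalizers, and therefore all finite limits.

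\textbf{Part (2).} To show that $\gnerve X$ is a Kan complex I would verify the right lifting property against each open box inclusion $\obox{k}{i,\varepsilon} \ito \cube{k}$. Unwinding the nerve, an open box in $\gnerve X$ is a compatible family of stable maps assembling into all faces of a $k$-cube except the $(i,\varepsilon)$-face, and a filler is a single stable map $I_{\infty}^{\gtimes k} \to X$ restricting to the prescribed faces. The approach is to obtain fillers from a graph-level retraction: the open box is realized by a subgraph $B \subseteq I_{\infty}^{\gtimes k}$ (the union of the walls corresponding to the present faces), and I would construct a graph retraction $r \from I_{\infty}^{\gtimes k} \to B$ compatible with the stability bounds, so that precomposition with $r$ carries a given stable map $B \to X$ to a stable filler. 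The retraction is built by folding the missing direction onto the present walls using the lattice (minimum/maximum connection) structure of $I_{\infty}$. Constructing this retraction and verifying that it preserves stability is the technical heart of the argument and the main obstacle; it is the cubical Kan condition for graph nerves established in \cite{carranza-kapulkin:cubical-graphs}.

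\textbf{Part (3).} For path components, the $0$-cubes of $\gnerve X$ are the stable maps $I_{\infty}^{\gtimes 0} = I_{0} \to X$, i.e.\ the vertices of $X$, and a $1$-cube joining two vertices is a stable map $I_{\infty} \to X$, i.e.\ a finite path in $X$; thus two vertices are identified in $\pi_{0}\gnerve X$ exactly when they lie in the same connected component of $X$, giving $\pi_{0}X \cong \pi_{0}\gnerve X$. For the higher groups I would use Part~(2) to compute $\pi_{n}(\gnerve X, x) = [(\cube{n},\bdcube{n}),(\gnerve X, x)]$. A representing $n$-cube is a stable map $I_{\infty}^{\gtimes n} \to X$ whose entire boundary is collapsed to $x$; by stability this forces $f$ to equal $x$ outside a finite box, so these representatives are precisely the finitely supported maps defining $A_{n}(X,x)$. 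I would then match the two equivalence relations — an $A$-homotopy that is eventually constant at $x$ is exactly an $(n+1)$-cube of $\gnerve X$ witnessing a homotopy of cube maps rel boundary — and check that concatenation of the finite non-constant regions agrees with the cubical group operation, yielding $A_{n}(X,x) \cong \pi_{n}(\gnerve X, x)$.

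The only delicate bookkeeping in Part~(3) is the passage between \emph{stable in all directions} and \emph{finitely supported relative to $x$}, which is precisely the effect of the boundary condition, so once the Kan condition of Part~(2) is available, Parts~(1) and~(3) follow by unwinding the definitions of the nerve, of $A_{n}$, and of the cubical homotopy groups. The genuine difficulty is therefore concentrated in Part~(2).
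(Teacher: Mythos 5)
Your proposal is correct, but note that the paper's own proof of this theorem is purely a citation: part (1) is \cite[Prop.~3.8]{carranza-kapulkin:cubical-graphs}, part (2) is \cite[Thm.~4.5]{carranza-kapulkin:cubical-graphs}, and part (3) is \cite[Thm.~4.6]{carranza-kapulkin:cubical-graphs}. Where you defer part (2) to that reference you therefore coincide with the paper exactly; for parts (1) and (3) you go beyond it by sketching direct arguments, which is where the comparison is interesting. Your part (1) is sound: the reduction to the terminal object, binary products, and equalizers (the categorical ones in $\Graph$, not the box product), and the observation that a uniform stability bound is obtained as the maximum of finitely many bounds, is exactly the reason only \emph{finite} limits are preserved. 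Two points would need real care if written out in full. In (2), a map $\obox{k}{i,\varepsilon} \to \gnerve X$ is a compatible family of stable maps whose faces are read off ``at infinity'' (at the stabilized values), so realizing the open box as an honest subgraph $B \subseteq I_{\infty}^{\gtimes k}$ means placing the walls at chosen finite heights and reconciling the different stability bounds; that reconciliation, not just the min/max folding retraction, is the delicate part of the cited proof. In (3), matching the equivalence relations (finite-length, finitely supported $A$-homotopies versus stable cubical homotopies rel boundary) and, especially, showing that concatenation of non-constant regions agrees with the Kan-filler group operation on $\pi_{n}$ is genuine work --- it is the content of \cite[Thm.~4.6]{carranza-kapulkin:cubical-graphs} rather than a formal unwinding of definitions. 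In short, your proposal is consistent with the paper's proof and, for parts (1) and (3), more detailed than it.
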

\begin{proof}
    (1) is \cite[Prop.~3.8]{carranza-kapulkin:cubical-graphs}.
    (2) is \cite[Thm.~4.5]{carranza-kapulkin:cubical-graphs}.
    (3) is \cite[Thm.~4.6]{carranza-kapulkin:cubical-graphs}
\end{proof}

Thus, we may view the nerve functor as taking values in the fibration category $\Kan$ of cubical Kan complexes.

\begin{definition}
    A graph map $f \from X \to Y$ is a \emph{fibration} if its image $\gnerve f \from \gnerve X \to \gnerve Y$ under the nerve functor $N \from \Graph \to \Kan$ is a Kan fibration of cubical sets.
\end{definition}

We now describe a useful criterion for verifying whether a graph map is a fibration.

\begin{notation}
\begin{enumerate}
    \item For integers $m, n \geq 0$, the graph $\gexp{I_{m}}{n}$ has vertices given by $n$-tuples $(x_{1},\ldots,x_{n})$ where each $x_{i} \in \left\{0,\ldots,m\right\}$ for $i = 1,\ldots,n$, and edges given by $((x_{1},\ldots,x_{i},\ldots,x_{n})) \sim ((x_{1},\ldots,x_{i} \pm 1,\ldots,x_{n}))$ for any $i = 1, \ldots,n$.
    \item For integers $m, n \geq 0$, let $\partial \gexp{I_{m}}{n}$ denote the full subgraph of $\gexp{I_{m}}{n}$ on the vertices $(x_{1},\ldots,x_{n})$ for which, there exists at least one $j \in \left\{1,\ldots,n\right\}$ such that either $x_{j} = 0$ or $x_{j} = m$.
    \item For integers $m, n > 0$, let $\obox{n}{m}$ denote the full subgraph of $\partial \gexp{I_{m}}{n}$ on the vertices $(x_{1},\ldots,x_{n})$ for which, if we have $x_{n} = m$, there must exist at least one $j \in \left\{1,\ldots,n-1\right\}$ such that either $x_{j} = 0$ or $x_{j} = m$.
    \item For integers $m, \delta \geq 0$, we have a graph map $c_{m,\delta} \from I_{m+2\delta} \to I_{m}$ given by the formula
    \[
        c_{m}(i) = \begin{cases}
            0 & \text{if } 0 \leq i \leq \delta \\
            i-\delta & \text{if } \delta \leq i \leq m+\delta \\
            m & \text{if } m+\delta \leq i \leq m+2\delta\text{.}
        \end{cases}
    \]
    Taking iterated box products, we obtain for each $n \geq 0$, a map $\gexp{c_{m,\delta}}{n} \from \gexp{I_{m+2\delta}}{n} \to \gexp{I_{m}}{n}$, which further induces maps $\partial\gexp{I_{m+2\delta}}{n} \to \partial\gexp{I_{m}}{n}$ and $\obox{n}{m+2\delta} \to \obox{n}{m}$.
\end{enumerate}
\end{notation}

\begin{proposition}[\protect{\cite[Prop.~5.11]{carranza-kapulkin:cubical-graphs}}]
    \label{prop:criterion-fibrations}
    A graph map $f \from X \to Y$ is a fibration if and only if, for any integers $m, n > 0$ and commutative square of the form
    \[
    \begin{tikzcd}
        \obox{n}{m} \arrow[r] \arrow[d,hook] & X \arrow[d,"f"] \\
        \gexp{I_{m}}{n} \arrow[r] & Y
    \end{tikzcd}
    \]
    there exists an integer $\delta \geq 0$ and a map $\gexp{I_{m+2\delta}}{n} \to X$ that makes the following diagram commute:
    \[
    \pushQED{\qed}
    \begin{tikzcd}
        \obox{n}{m+2\delta} \arrow[r] \arrow[d,hook] & \obox{n}{m} \arrow[r] \arrow[d,hook] & X \arrow[d,"f"] \\
        \gexp{I_{m+2\delta}}{n} \arrow[r,"\gexp{c_{m,\delta}}{n}"'] \arrow[urr,dotted] & \gexp{I_{m}}{n} \arrow[r] & Y
    \end{tikzcd}
    \qedhere
    \popQED
    \]
\end{proposition}

\begin{theorem}[\protect{\cite[Thm.~5.9,~Prop.~5.12,~Prop.~5.13]{carranza-kapulkin:cubical-graphs}}] \label{thm:fib-cat-Graph-infty}
    The category $\Graph$ admits a fibration category structure where
    \begin{itemize}
        \item the fibrations are maps which are sent to Kan fibrations under the nerve functor $\gnerve \from \Graph \to \Kan$, and
        \item the weak equivalences are the weak $A$-homotopy equivalences.
    \end{itemize}
    Given this structure on $\Graph$, the nerve functor $\gnerve \from \Graph \to \Kan$ is an exact functor of fibration categories.
    \qed
\end{theorem}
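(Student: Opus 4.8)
The plan is to transport the fibration category structure from $\Kan$ to $\Graph$ along the nerve functor, verifying the axioms of a fibration category in the sense of \cite{brown,szumilo-cofibration-categories} directly. First I would record the preliminary identifications. The category $\Graph$ has all finite limits, with terminal object the one-vertex graph $I_{0}$ and $\gnerve I_{0} \iso \cube{0}$; here the relevant product is the categorical product $X \times Y$ (with $(x,y) \sim (x',y')$ \iff $x \sim x'$ and $y \sim y'$), \emph{not} the box product $\gtimes$ used to define $A$-homotopy. Combining the comparison of homotopy groups $A_{n}(X,x) \iso \pi_{n}(\gnerve X, x)$ and $\pi_{0}X \iso \pi_{0}\gnerve X$ from \cref{thm:cubical-nerve-of-graphs}(3) with the homotopy-group characterization of weak homotopy equivalences in \cref{cor:whitehead}, a graph map $f$ is a weak $A$-homotopy equivalence \iff $\gnerve f$ is a weak homotopy equivalence of cubical sets. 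Thus both the fibrations and the weak equivalences on $\Graph$ are exactly the maps sent by $\gnerve$ into the corresponding class of the Grothendieck model structure.

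With this dictionary in hand, the structural axioms follow formally. All objects are fibrant, since $\gnerve X$ is a Kan complex for every graph $X$ by \cref{thm:cubical-nerve-of-graphs}(2), so every map to $I_{0}$ is a fibration. Two-out-of-three for weak equivalences, and the closure of fibrations and of acyclic fibrations under composition together with their containment of isomorphisms, are inherited from $\cSet$ because $\gnerve$ is functorial and the relevant classes are defined by reflection. For stability under pullback, I would use that $\Graph$ has finite limits and that $\gnerve$ preserves them (\cref{thm:cubical-nerve-of-graphs}(1)): the pullback in $\Graph$ of a fibration (resp.\ acyclic fibration) along an arbitrary map is carried by $\gnerve$ to a pullback of a Kan fibration (resp.\ acyclic Kan fibration), which is again of the same kind in $\cSet$, so the graph map is again a fibration (resp.\ acyclic fibration).

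The one axiom requiring genuine work is the factorization of an arbitrary graph map as a weak equivalence followed by a fibration, and this is where I expect the main obstacle. Rather than attempt a direct factorization, I would follow Brown's path-object argument: it suffices to produce, functorially in $Y$, a path object, i.e.\ a factorization of the diagonal $Y \to Y \times Y$ as a weak $A$-homotopy equivalence $Y \xrightarrow{\sim} PY$ followed by a fibration $PY \fto Y \times Y$. Given such path objects, the mapping-path factorization of $f \from X \to Y$ is obtained from the pullback $X \times_{Y} PY$, which exists in $\Graph$; the evaluation to $Y$ is a fibration (derived from the path-object fibration), while the canonical section $X \to X \times_{Y} PY$ is a pullback of an acyclic fibration and hence a weak equivalence by two-out-of-three. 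The difficulty is purely combinatorial: graph maps cannot be freely subdivided, so one must exhibit an explicit graph $PY$ whose nerve realizes a cubical path object for $\gnerve Y$, reconciling the length-$n$ interval graphs $I_{n}$ and the box product $\gtimes$ that define $A$-homotopy with the single cubical interval $\cube{1}$ and the categorical product governing the fibration-category structure. The natural candidate is a graph of stabilizing paths in $Y$ modeled on $I_{\infty}$, and the heart of the proof is checking that its nerve is weakly equivalent to $\gnerve Y$ and that endpoint evaluation is a Kan fibration.

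Finally, exactness of $\gnerve \from \Graph \to \Kan$ is immediate once the structure is in place: by construction $\gnerve$ sends fibrations to Kan fibrations and weak equivalences to weak homotopy equivalences, and by \cref{thm:cubical-nerve-of-graphs}(1) it preserves the terminal object and pullbacks along fibrations, which are precisely the conditions defining an exact functor of fibration categories.
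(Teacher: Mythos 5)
The first thing to note is that the paper does not prove this statement at all: it is imported wholesale from \cite[Thm.~5.9, Prop.~5.12]{carranza-kapulkin:cubical-graphs}, which is why the statement carries a \qedsymbol{} with no proof environment. Your attempt is therefore a reconstruction of that external argument. Its overall architecture is sound and matches how the cited work proceeds: both distinguished classes are created by reflection along $\gnerve$; the dictionary between weak $A$-homotopy equivalences and weak homotopy equivalences of nerves follows from \cref{thm:cubical-nerve-of-graphs}(3) together with \cref{cor:whitehead}; fibrancy of all objects, closure properties, and pullback stability follow from \cref{thm:cubical-nerve-of-graphs}(1)--(2) and the corresponding facts in $\cSet$; and the reduction of the factorization axiom to the existence of path objects via Brown's factorization lemma is legitimate here, precisely because all objects are fibrant and acyclic fibrations are pullback-stable. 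The exactness claim at the end is likewise correctly dispatched.

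However, there is a genuine gap exactly where you yourself locate ``the heart of the proof'': the path object $PY$ is never constructed, and neither of its two defining properties is verified. Producing a graph $PY$ with a factorization $Y \to PY \to Y \times Y$ of the diagonal such that $Y \to PY$ is a weak $A$-homotopy equivalence and $\gnerve PY \to \gnerve Y \times \gnerve Y$ is a Kan fibration is the substantive combinatorial content of the cited theorem, and none of it follows formally from the dictionary you set up: $\gnerve$ is only known to preserve finite limits, and there is no a priori identification of the nerve of the stable-path graph with the cubical cotensor of $\gnerve Y$ by $\cube{1}$, so the Kan-fibrancy of the cubical path fibration cannot simply be borrowed. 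One must verify the open-box filling conditions for the endpoint evaluation directly, reconciling $\gtimes$ and $I_{\infty}$ with the cubical structure, which is what the cited work does and what your proposal leaves as an acknowledged placeholder. A minor additional imprecision: the dual of \cite[Def.~1.1]{szumilo-cofibration-categories} asks for 2-out-of-6, not merely 2-out-of-3, for the weak equivalences; this is harmless in your setup, since weak $A$-homotopy equivalences are created by $\gnerve$ from weak homotopy equivalences and the latter class satisfies 2-out-of-6, but it should be said.
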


A central open problem in discrete homotopy theory is to determine whether the nerve functor $N \from \Graph \to \Kan$ is a weak equivalence of fibration categories.
Although we do not solve this problem here, in the remainder of this paper, we propose a new line of attack towards this problem, using the model category structure on $\cSet$ constructed in \cref{thm:transferred-model-cubical}.

\subsection{Homotopy $n$-types of graphs}

We fix an integer $n \geq 0$ for the remainder of this subsection.
We can now define (discrete) $n$-equivalences.

\begin{definition}
    A graph map $f \from X \to Y$ is an \emph{$n$-equivalence} if it induces a bijection $f_{*} \from \pi_{0}X \to \pi_{0}Y$ between the set of path-components, and an isomorphism $f_{*} \from A_{n}(X,x) \to A_{n}(Y,fx)$ for every $n > 0$ and every vertex $x \in X$.
\end{definition}

We construct a fibration category structure on $\Graph$ where the weak equivalences are the $n$-equivalences. 
It is unclear whether this fibration category structure can be upgraded to a full model category structure and one should expect a negative answer.
It is known, for example, that when considering discrete homotopy equivalences, as opposed to discrete weak or $n$-equivalences, no suitable model category structure exists \cite{carranza-kapulkin-kim}.

Note that the nerve of every graph is a transferred $n$-fibrant cubical set (since every Kan complex is transferred $n$-fibrant).
Thus, we may view the nerve functor as taking values in the fibration catgeory $\cSet[n]^{\mathrm{fib}}$ of transferred $n$-fibrant cubical sets.
This is why we need to work with the analogue of the model category structure of \cite{Elvira-Donazar_Hernandez-Paricio_1995} instead of the Bousfield localization of the Grothendieck model category structure.
The latter would require restricting the class of graphs to those that are homotopy $n$-types.
However constructing a homotopy $n$-type approximation of an arbitrary graph remains an open problem.

\begin{definition}
    A graph map $f \from X \to Y$ is an \emph{$n$-fibration} if its image $\gnerve f \from \gnerve X \to \gnerve Y$ under the nerve functor $\gnerve \from \Graph \to \cSet[n]^{\mathrm{fib}}$ is a transferred $n$-fibration.
    Or equivalently, by virtue of \cref{prop:transferring-makes-naivete-redundant}, if $\gnerve f$ has the right lifting property with respect to the set
    \[
        J_{n}' = \left\{ \obox{k}{i,\varepsilon} \hookrightarrow \cube{k} \ \mid \ 0 < k \leq n+1, \ 1 \leq i \leq k, \ \varepsilon = 0, 1 \right\} \cup \left\{ \obox{n+2}{i,\varepsilon} \hookrightarrow \bdcube{n+2} \ \mid \ 1 \leq i \leq n+2, \ \varepsilon = 0, 1  \right\}.
    \]
\end{definition}

Analogous to \cref{prop:criterion-fibrations}, we have the following criterion for verifying whether a graph map is an $n$-fibration.

\begin{proposition}
    A graph map $f \from X \to Y$ is an $n$-fibration if and only if, for any integers $m > 0$, $0 < k \leq n+1$ and commutative square of the form
    \[
    \begin{tikzcd}
        \obox{k}{m} \arrow[r] \arrow[d,hook] & X \arrow[d,"f"] \\
        \gexp{I_{m}}{k} \arrow[r] & Y
    \end{tikzcd}
    \]
    there exists an integer $\delta \geq 0$ and a map $\gexp{I_{m+2\delta}}{k} \to X$ that makes the following diagram commute:
    \[
    \begin{tikzcd}
        \obox{k}{m+2\delta} \arrow[r] \arrow[d,hook] & \obox{k}{m} \arrow[r] \arrow[d,hook] & X \arrow[d,"f"] \\
        \gexp{I_{m+2\delta}}{k} \arrow[r] \arrow[urr,dotted] & \gexp{I_{m}}{k} \arrow[r] & Y
    \end{tikzcd}
    \]
    and furthermore, for any integer $m > 0$ and commutative square of the form
    \[
    \begin{tikzcd}
        \obox{n+2}{m} \arrow[r] \arrow[d,hook] & X \arrow[d,"f"] \\
        \partial\gexp{I_{m}}{n+2} \arrow[r] & Y
    \end{tikzcd}
    \]
    there exists an integer $\delta \geq 0$ and a map $\partial \gexp{I_{m+2\delta}}{n+2} \to X$ that makes the following diagram commute:
    \[
    \begin{tikzcd}
        \obox{n+2}{m+2\delta} \arrow[r] \arrow[d,hook] & \obox{n+2}{m} \arrow[r] \arrow[d,hook] & X \arrow[d,"f"] \\
        \partial\gexp{I_{m+2\delta}}{n+2} \arrow[r] \arrow[urr,dotted] & \partial\gexp{I_{m}}{n+2} \arrow[r] & Y
    \end{tikzcd}
    \]
\end{proposition}
\begin{proof}
    This follows from an argument similar to the proof of \cite[Prop.~5.11]{carranza-kapulkin:cubical-graphs}.
\end{proof}

\begin{theorem} \label{thm:fib-cat-Graph-n}
    The category $\Graph$ admits a fibration category structure where
    \begin{itemize}
        \item the fibrations are maps which are sent to transferred $n$-fibrations under the nerve functor $\gnerve \from \Graph \to \cSet[n]^{\mathrm{fib}}$, and
        \item the weak equivalences are the $n$-equivalences.
    \end{itemize}
    We denote this fibration category structure by $\Graph_{n}$.
    Given this structure on $\Graph$, the nerve functor $\gnerve \from \Graph_{n} \to \cSet[n]^{\mathrm{fib}}$ is an exact functor of fibration categories.
\end{theorem}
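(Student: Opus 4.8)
The plan is to verify the fibration-category axioms of \cite{brown,szumilo-cofibration-categories} for $\Graph_{n}$ directly, using the nerve functor to import from the cubical side everything we need. The fibrant objects of the model structure $\cSet[n]$ of \cref{thm:transferred-model-cubical} form a fibration category $\cSet[n]^{\mathrm{fib}}$ in the standard way, and by \cref{thm:cubical-nerve-of-graphs}(2) together with \cref{cor:Kan-complexes-are-n-fibrant} the nerve of every graph is a Kan complex, hence transferred $n$-fibrant, so $\gnerve$ genuinely takes values in $\cSet[n]^{\mathrm{fib}}$. Throughout I would use the identification, furnished by \cref{thm:cubical-nerve-of-graphs}(3) and \cref{thm:n-equivalences-characterization}, of the graph $n$-equivalences with exactly those graph maps $f$ whose nerve $\gnerve f$ is an $n$-equivalence of cubical sets; by definition the fibrations of $\Graph_{n}$ are the $f$ with $\gnerve f$ a transferred naive $n$-fibration.

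I would then dispatch the structural axioms. The category $\Graph$ is finitely complete and $\gnerve$ preserves finite limits by \cref{thm:cubical-nerve-of-graphs}(1); the terminal graph $I_{0}$ has nerve $\cube{0}$, and every object is fibrant since its nerve is a Kan complex. Because transferred naive $n$-fibrations are exactly the maps with the right lifting property against $J_{n}'$ (\cref{prop:transfer-naive-n-fibrations}), while the maps that are both transferred $n$-fibrations and $n$-equivalences are exactly those with the right lifting property against $I_{n}'$ (\cref{prop:transfer-n-cofibrations})---and between $n$-fibrant objects these coincide with the acyclic fibrations---both fibrations and acyclic fibrations are cut out by lifting properties. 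Functoriality of $\gnerve$ then yields closure under composition, and preservation of pullbacks by $\gnerve$ turns the pullback-stability of these lifting-property classes in $\cSet$ into pullback-stability of fibrations and acyclic fibrations in $\Graph_{n}$ (using that the nerve of a pullback of fibrations is again the corresponding pullback between Kan complexes). The 2-out-of-6 property for $n$-equivalences is reflected from $\cSet[n]$ along $\gnerve$ via the identification above.

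The one axiom needing genuine input is the factorization of an arbitrary graph map as a weak equivalence followed by a fibration: since $\gnerve$ is not essentially surjective, factorizations cannot be borrowed object-by-object from $\cSet$. Here I would exploit \cref{thm:fib-cat-Graph-infty}, whose fibration category has the same underlying category $\Graph$ and whose classes are \emph{contained} in ours---every weak $A$-homotopy equivalence is an $n$-equivalence, and every map sent to a Kan fibration is sent to a transferred naive $n$-fibration by \cref{cor:Kan-complexes-are-n-fibrant}. Consequently the factorization of a graph map $f$ supplied by \cref{thm:fib-cat-Graph-infty} as a weak $A$-homotopy equivalence followed by a fibration is at once a factorization of $f$ as an $n$-equivalence followed by an $n$-fibration, exactly as $\Graph_{n}$ demands. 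I expect this containment to be the crux of the argument: it lets us inherit the delicate path-object factorization of \cite{carranza-kapulkin:cubical-graphs} rather than reconstruct it.

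Finally, exactness of $\gnerve \from \Graph_{n} \to \cSet[n]^{\mathrm{fib}}$ is immediate once the structure is in place: by definition $\gnerve$ carries $n$-fibrations to transferred naive $n$-fibrations and $n$-equivalences to $n$-equivalences, and it preserves the terminal object and pullbacks along fibrations because it preserves all finite limits. It is worth recording, as the surrounding discussion emphasizes, that the whole argument depends on working inside $\cSet[n]^{\mathrm{fib}}$ rather than a Bousfield localization of the Grothendieck structure: it is precisely because nerves are automatically $n$-fibrant that $\gnerve$ lands in the fibration category at all.
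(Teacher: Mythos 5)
Your proposal is correct and follows essentially the same route as the paper's proof: both verify the (dual) Szumi\l{}o axioms directly, import pullback-stability of fibrations and acyclic fibrations through the limit-preserving nerve functor, and---crucially---obtain the factorization axiom by reusing the factorization of \cref{thm:fib-cat-Graph-infty}, noting that weak $A$-homotopy equivalences are $n$-equivalences and fibrations are $n$-fibrations. The exactness claim is likewise handled identically, so there is nothing to add.
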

\begin{proof}
    We verify the dual of the axioms in \cite[Def.~1.1]{szumilo-cofibration-categories}.
    The 2-out-of-6 property for $n$-equivalences, and the fact that every isomorphism is both an $n$-equivalence and an $n$-fibration, are both immediate.
    The terminal object in $\Graph$ is $I_{0}$, and for every graph $X$, the unique map $X \to I_{0}$ is a fibration, and hence an $n$-fibration.
    Since the category $\Graph$ is complete, all pullbacks exist, and are preserved by the nerve functor (\cref{thm:cubical-nerve-of-graphs}).
    Hence, the fact that the classes of fibrations and acyclic fibrations in $\Graph_{n}$ are stable under pullback follows from the corresponding property for the fibration category $\cSet[n]^{\mathrm{fib}}$. 
    Finally, \cref{thm:fib-cat-Graph-infty} tells us that every graph map factors as a weak $A$-homotopy equivalence followed by a fibration.
    Since every weak $A$-homotopy equivalence is an $n$-equivalence, and every fibration is an $n$-fibration, this also gives the required factorization as an $n$-equivalence followed by an $n$-fibration.
    This proves that $\Graph_{n}$ is a fibration category.
    The nerve functor $\gnerve \from \Graph_{n} \to \cSet[n]^{\mathrm{fib}}$ clearly preserves fibrations and acyclic fibrations.
    By \cref{thm:cubical-nerve-of-graphs}, it also preserves all pullbacks, as well as the terminal object.
    Thus, it is an exact functor of fibration categories (see \cite[Def.~1.2]{szumilo-cofibration-categories}).
\end{proof}

As an intermediate step towards the conjecture that the nerve functor $N \from \Graph \to \cSet^\mathrm{fib}$ is a weak equivalence of fibration categories, we can ask whether $N \from \Graph_{n} \to \cSet[n]^{\mathrm{fib}}$ is a weak equivalence.
We expect this problem to be more tractable, especially since there are algebraic models for homotopy $n$-types of spaces, e.g., \cite{loday:n-types-of-spaces,brown-higgins}.

For instance, for $n=1$, the fibration category $\cSet[1]^{\mathrm{fib}}$ is known to be weakly equivalent to the classical fibration category structure on the category $\Gpd$ of groupoids (see, e.g., \cite[Exercise~4.3.4]{hatcher}).

\subsection{The fundamental groupoid in discrete homotopy theory}

\begin{definition}
    Let $X$ be a graph, and $x, x' \in X$ be two vertices.
    A \emph{path} $\gamma \from x \rightsquigarrow x'$ in $X$ is a map $\gamma \from I_{\infty} \to X$ for which there exist integers $N_{-}, N_{+} \in \mathbb{Z}$ such that $\gamma\left(i\right) = x$ for all $i \leq N_{-}$ and $\gamma\left(i\right) = x'$ for all $i \geq N_{+}$.
\end{definition}

\begin{definition}
    Let $X$ be a graph, and $\gamma, \eta \from x \rightsquigarrow x'$ be two paths in $X$.
    A \emph{path-homotopy} $H \from \gamma \Rightarrow \eta$ of length $n \in \mathbb{N}$ is a map $H \from I_{\infty} \gtimes I_{n} \to X$ such that $H\left(-,0\right) = \gamma$ and $H\left(-,n\right) = \eta$, and such that each $H\left(-,i\right)$ is a path from $x$ to $x'$ in $X$.
    Two paths $\gamma, \eta \from x \rightsquigarrow x'$ are \emph{path-homotopic} if there exists a path-homotopy $H \from \gamma \Rightarrow \eta$ of any length $n \in \mathbb{N}$.
    This defines an equivalence relation on paths from $x$ to $x'$ in $X$.
    The equivalence class $[\gamma]$ of a path $\gamma$ in $X$ under path-homotopy is called its \emph{path-homotopy class}.
\end{definition}

\begin{definition}
    The \emph{discrete fundamental groupoid} $\Pi_{1}X$ of a graph $X$ is defined as follows:
    \[
    \Pi_{1}X =
    \begin{cases}
        \text{objects: } & \text{vertices } x, x', \ldots \text{ of } X \\
        \text{morphisms: } & \text{path-homotopy classes } \left[\gamma\right] \from x \to x' \text{ of} \\
        & \text{paths } \gamma \from x \rightsquigarrow x' \text{ in } X
    \end{cases}
    \]
    Composition is given by concatenating paths, i.e. $[\eta] \circ [\gamma] = [\gamma \ast \eta]$, and inverses are given by the inverse path, i.e. $[\gamma]^{-1} = [\overline{\gamma}]$.
    See \cite{kapulkin-mavinkurve} for a more detailed description of these operations.
\end{definition}

Equivalently, the discrete fundamental groupoid of a graph $X$ is the homotopy category its nerve $\gnerve X \in \Kan$.
\[
    \Pi_{1}X = \Ho (\gnerve X)
\]

\begin{theorem}\label{thm:fund-gpd-is-exact}
    The fundamental groupoid functor $\Pi_{1} \from \Graph_{1} \to \Gpd$ is an exact functor of fibration categories.
\end{theorem}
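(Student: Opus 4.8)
The plan is to verify directly that $\Pi_{1}$ satisfies the (dual of the) conditions for an exact functor in \cite[Def.~1.2]{szumilo-cofibration-categories}: preservation of the terminal object, of fibrations, of acyclic fibrations, and of pullbacks along fibrations. Here $\Gpd$ carries its classical fibration category structure, in which the weak equivalences are the equivalences of groupoids, the fibrations are the isofibrations, and the terminal object is the one-morphism groupoid. Throughout I would exploit the factorization $\Pi_{1}X = \Ho(\gnerve X)$: since $\gnerve X$ is always a Kan complex (\cref{thm:cubical-nerve-of-graphs}), $\Pi_{1}X$ is a genuine groupoid whose isomorphism classes of objects are the elements of $\pi_{0}\gnerve X \cong \pi_{0}X$ and whose automorphism group at a vertex $x$ is $\pi_{1}(\gnerve X, x) \cong A_{1}(X,x)$.

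First I would dispatch the three easy conditions. The terminal object is preserved since $\gnerve I_{0} = \cube{0}$ and $\Ho(\cube{0})$ is the one-morphism groupoid. For fibrations, recall that if $f$ is a fibration in $\Graph_{1}$ then $\gnerve f$ is a transferred naive $1$-fibration and hence has the right lifting property against $\obox{1}{1,\varepsilon} \hookrightarrow \cube{1}$ (\cref{prop:transfer-naive-n-fibrations}); given a vertex $x$ and an isomorphism of $\Pi_{1}Y$ out of $\Pi_{1}f(x)$, represented by a $1$-cube, this lift produces a $1$-cube over it in $\gnerve X$, so $\Pi_{1}f$ is an isofibration. For weak equivalences, a $1$-equivalence of graphs is by definition a bijection on $\pi_{0}$ together with an isomorphism on $A_{1}$, so by the identifications above $\Pi_{1}f$ is essentially surjective and fully faithful, i.e.\ an equivalence of groupoids. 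Acyclic fibrations, being exactly the maps that are simultaneously fibrations and weak equivalences, are therefore preserved as well.

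The crux is preservation of pullbacks along fibrations. Since $\gnerve$ preserves all finite limits (\cref{thm:cubical-nerve-of-graphs}), a pullback $X \times_{Z} Y$ along a fibration $Y \to Z$ in $\Graph_{1}$ is carried to the pullback $\gnerve X \times_{\gnerve Z} \gnerve Y$ of cubical sets --- all of them nerves of graphs, hence Kan complexes --- along the transferred naive $1$-fibration $\gnerve(Y \to Z)$. It therefore suffices to show that the canonical comparison
\[
    \Ho\bigl(\gnerve X \times_{\gnerve Z} \gnerve Y\bigr) \longrightarrow \Pi_{1}X \times_{\Pi_{1}Z} \Pi_{1}Y
\]
is an isomorphism of groupoids, where the right-hand side is the strict pullback in $\Gpd$ (which computes the pullback along the isofibration $\Pi_{1}(Y \to Z)$). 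The comparison is the identity on objects, a vertex of the pullback being precisely a compatible pair of vertices. For full faithfulness, a morphism of the target is a pair of path-homotopy classes whose images in $\Pi_{1}Z$ agree only up to a connecting homotopy; to realize it by a single path-homotopy class in the pullback I would lift that homotopy along the fibration using the right lifting property against $\obox{2}{i,\varepsilon} \hookrightarrow \cube{2}$ --- again available by \cref{prop:transfer-naive-n-fibrations}, as this open box lies in dimension $\leq n+1 = 2$ --- thereby rectifying the two representatives so that their images in $\gnerve Z$ become strictly equal; the same $2$-dimensional filling applied to a homotopy yields faithfulness.

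I expect this last step to be the main obstacle, as it carries the only genuinely homotopy-theoretic content: one must check that the limited lifting available to a transferred naive $1$-fibration --- open boxes only through dimension $2$ --- is exactly what is needed to rectify the $\pi_{0}$- and $\pi_{1}$-level data recorded by $\Pi_{1}$. (Alternatively, one could argue abstractly that pullbacks along fibrations in $\cSet[1]^{\mathrm{fib}}$ are homotopy pullbacks and that the fundamental groupoid carries homotopy pullbacks to pullbacks along isofibrations, but the explicit rectification above is the essential point.) Assembling the four verifications shows that $\Pi_{1} \from \Graph_{1} \to \Gpd$ is exact.
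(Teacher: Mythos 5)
Your overall architecture matches the paper's: check the terminal object, fibrations, weak equivalences, acyclic fibrations, and pullbacks along fibrations, with the last as the crux; your isofibration argument is the paper's \cref{prop:fund-gpd-preserves-fibrations} essentially verbatim, and your $2$-dimensional rectification of representative paths is the paper's \cref{lem:choose-nice-reps}, which indeed yields fullness of the comparison functor. The gap is your claim that ``the same $2$-dimensional filling applied to a homotopy yields faithfulness.'' It does not. To show the comparison $\Pi_{1}(X \times_{Z} Y) \to \Pi_{1}X \times_{\Pi_{1}Z} \Pi_{1}Y$ is injective on morphisms, you must produce, from path-homotopies $H \from \eta \Rightarrow \eta'$ in $X$ and $G \from \tau \Rightarrow \tau'$ in $Y$, a path-homotopy in the pullback \emph{graph}, i.e.\ a homotopy $H'$ in $X$ with $f \circ H' = g \circ G$ \emph{strictly}. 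The object being rectified is now a homotopy --- a $2$-cube of $\gnerve X$ --- so the lifting problem whose solution supplies $H'$ has a $2$-dimensional missing face and is therefore a $3$-dimensional open-box problem; no argument confined to lifting against $\obox{2}{i,\varepsilon} \hookrightarrow \cube{2}$ can reach it. Concretely, if you lift $g \circ G$ step by step using $2$-dimensional boxes starting from $\eta$, you end at some path $\eta''$ with $f \circ \eta'' = g \circ \tau'$ and $\eta''$ path-homotopic to $\eta'$ in $X$; to conclude you still need that two paths with \emph{equal} images in $Z$ that are path-homotopic in $X$ are homotopic through a homotopy whose image in $Z$ is degenerate, and that is irreducibly a statement one dimension up.

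This is exactly what the paper's \cref{lem:choice-of-reps-doesnt-matter} supplies: it solves a lifting problem against $\obox{3}{3,1} \hookrightarrow \bdcube{3}$, which lies in $J_{1}'$ by \cref{prop:transfer-naive-n-fibrations}. Note that this is the ``extra'' lifting property in dimension $n+2 = 3$ --- filling the missing face of the \emph{boundary} $\bdcube{3}$, not the whole cube $\cube{3}$ --- which is precisely what distinguishes transferred naive $1$-fibrations from maps with ordinary low-dimensional box-filling, and this lemma is where that extra property earns its keep. Your parenthetical abstract alternative (that $\Pi_{1}$ carries homotopy pullbacks of Kan complexes along these fibrations to strict pullbacks of groupoids) is not a repair as stated, since that assertion is essentially the statement being proved. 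So: the four easy verifications and fullness are fine, but faithfulness requires the $3$-dimensional lifting argument, and identifying that need is the genuinely homotopy-theoretic content of the theorem.
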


We prove this theorem in stages.
First, we prove that the fundamental groupoid functor $\Pi_{1} \from \Graph_{1} \to \Gpd$ preserves fibrations.

\begin{proposition}\label{prop:fund-gpd-preserves-fibrations}
    If a graph map $f \from X \to Y$ is a 1-fibration, then the functor $\Pi_{1}f \from \Pi_{1}X \to \Pi_{1}Y$ is an isofibration of groupoids.
\end{proposition}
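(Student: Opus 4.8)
The plan is to unwind both the definition of a $1$-fibration and the description of the fundamental groupoid as the homotopy category of a cubical Kan complex, and then to observe that the isofibration lifting problem is literally one of the open-box lifting problems that $\gnerve f$ is required to solve. Recall that $\Pi_{1}X = \Ho(\gnerve X)$, where $\gnerve X$ is a Kan complex by \cref{thm:cubical-nerve-of-graphs}: its objects are the $0$-cubes of $\gnerve X$ and its morphisms $a \to b$ are homotopy classes of $1$-cubes $e$ with $\partial_{1,0}e = a$ and $\partial_{1,1}e = b$, with $\Pi_{1}f$ acting by $[e] \mapsto [\gnerve f \circ e]$. A functor of groupoids is an isofibration precisely when, for every object $x$ in the domain and every morphism out of its image, there is a morphism out of $x$ lifting it. Thus, to prove the proposition, I would fix a vertex $x \in X$ and a morphism $[e] \from f(x) \to y'$ in $\Pi_{1}Y$, represented by a $1$-cube $e$ of $\gnerve Y$ with $\partial_{1,0}e = f(x)$, and produce a morphism of $\Pi_{1}X$ out of $x$ that maps to $[e]$ under $\Pi_{1}f$.

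Next I would recognize this as a lifting problem against the open-box inclusion $\obox{1}{1,1} \ito \cube{1}$. The subcomplex $\obox{1}{1,1} \subseteq \cube{1}$ is precisely the source vertex $\partial_{1,0}$, so the vertex $x$ determines a map $\obox{1}{1,1} \to \gnerve X$ and the edge $e$ determines a map $\cube{1} \to \gnerve Y$; these assemble into a commuting square
\[
\begin{tikzcd}
\obox{1}{1,1} \arrow[r,"x"] \arrow[d,hook] & \gnerve X \arrow[d,"\gnerve f"] \\
\cube{1} \arrow[r,"e"] & \gnerve Y
\end{tikzcd}
\]
since both composites restrict to $f(x)$ on the source vertex. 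Because $\obox{1}{1,1} \ito \cube{1}$ belongs to the set $J_{1}'$ and $f$ is a $1$-fibration, the map $\gnerve f$ has the right lifting property against it; a diagonal filler is a $1$-cube $\tilde e$ of $\gnerve X$ with $\partial_{1,0}\tilde e = x$ and $\gnerve f \circ \tilde e = e$.

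Finally, I would set $x' = \partial_{1,1}\tilde e$ and take $[\tilde e] \from x \to x'$ as the desired lift: it satisfies $\Pi_{1}f([\tilde e]) = [\gnerve f \circ \tilde e] = [e]$, and since $\Pi_{1}X$ is a groupoid it is automatically invertible, so it lifts the isomorphism $[e]$ to an isomorphism. This completes the verification of the isofibration property. The argument is essentially a matter of matching conventions, and the only point requiring care is the bookkeeping of the face conventions---confirming that lifting a morphism out of a prescribed source corresponds to the open box $\obox{1}{1,1}$ (omitting the target face) rather than $\obox{1}{1,0}$, a distinction that is harmless here since both open boxes lie in $J_{1}'$---together with the standard fact that morphisms in the homotopy category of a cubical Kan complex are represented by single $1$-cubes, so that the strict edge-lift furnished by the $1$-fibration yields the required morphism on homotopy classes on the nose.
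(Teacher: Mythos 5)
Your proof is correct and is essentially identical to the paper's: both reduce the isofibration lifting problem to a single open-box lifting problem for $\gnerve f$ against $\obox{1}{i,\varepsilon} \ito \cube{1}$, solve it using the $1$-fibration hypothesis, and read off the lifted morphism as the homotopy class of the filler. Your extra care about which open box (source-vertex vs.\ target-vertex) is involved is a fair point of bookkeeping, and as you note it is immaterial since both inclusions lie in $J_{1}'$.
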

\begin{proof}
    Consider an object $x \in \Pi_{1}X$ and an isomorphism $[\gamma] \from f(x) \to y$ in $\Pi_{1}Y$.
    We can pick a representative path $\gamma \from f(x) \rightsquigarrow y$ in $Y$ and consider the following lifting problem:
    \[
    \begin{tikzcd}
        \obox{1}{i,0} \arrow[r,"x"] \arrow[d,hook] & \gnerve X \arrow[d,"\gnerve f"] \\
        \cube{1} \arrow[r,"\gamma"] & \gnerve Y
    \end{tikzcd}
    \]
    Since $f$ is a 1-fibration of graphs, this lifting problem has a solution, say $\tilde{\gamma} \from \cube{1} \to \gnerve X$.
    That is, there exists a path $\tilde{\gamma} \from x \rightsquigarrow x'$ in $X$ such that $f \circ \tilde{\gamma} = \gamma$.
    Thus, there exists an isomorphism $[\tilde{\gamma}] \from x \to x'$ in $\Pi_{1}X$ such that $\Pi_{1}f [\tilde{\gamma}] = [\gamma]$.
\end{proof}

Next, we prove that the fundamental groupoid functor $\Pi_{1} \from \Graph \to \Gpd$ preserves pullbacks along 1-fibrations.

Consider an arbitrary pullback square in $\Graph$
\[
\begin{tikzcd}
    X \times_{Z} Y \arrow[r] \arrow[d] \arrow[dr,phantom,very near start,"\lrcorner"] & X \arrow[d,"f"]  \\
    Y \arrow[r,"g"] & Z
\end{tikzcd}
\]
We have the following two groupoids:
\[
    \Pi_{1}\left(X \times_{Z} Y\right) \qquad \text{and} \qquad \Pi_{1}X \times_{\Pi_{1}Z} \Pi_{1}Y
\]
They have identical objects: pairs $(x,y)$ where $x$ is a vertex in $X$ and $y$ is a vertex in $Y$ subject to the condition $f\left(x\right) = g\left(y\right)$.
However, they might a priori differ in their morphisms.

A morphism $\left(x,y\right) \to \left(x',y'\right)$ in $\Pi_{1}\left(X \times_{Z} Y\right)$ is given by the path-homotopy class $\left[\gamma\right]$ of a path $\gamma \from \left(x,y\right) \rightsquigarrow \left(x',y'\right)$ in the pullback graph $X \times_{Z} Y$, which in turn is determined by a pair $\left(\eta,\tau\right)$ where $\eta \from x \rightsquigarrow x'$ is a path in $X$ and $\eta \from y \rightsquigarrow y'$ is a path in $Y$ subject to the condition $f \circ \eta = g \circ \tau$ as paths in $Z$.

On the other hand, a morphism $\left(x,y\right) \to \left(x',y'\right)$ in $\Pi_{1}X \times_{\Pi_{1}Z} \Pi_{1}Y$ is given by a pair $\left(\left[\eta\right],\left[\tau\right]\right)$ where $\left[\eta\right]$ is the path-homotopy class of a path $\eta \from x \rightsquigarrow x'$ in $X$ and $\left[\tau\right]$ is the path-homtopy class of a path $\tau \from y \rightsquigarrow y'$ in $Y$, subject to the condition $\left[f \circ \eta\right] = \left[g \circ \tau\right]$ as morphisms in $\Pi_{1}Z$.

By the universal property of pullbacks, we have a canonical functor
\[
    \Psi \from \Pi_{1}\left(X \times_{Z} Y\right) \to \Pi_{1}X \times_{\Pi_{1}Z} \Pi_{1}Y
\]
that maps a morphism $\left[\left(\eta,\tau\right)\right]$ in $\Pi_{1}\left(X \times_{Z} Y\right)$ to a morphism $\left(\left[\eta\right],\left[\tau\right]\right)$ in $\Pi_{1}X \times_{\Pi_{1}Z} \Pi_{1}Y$.
We need to show that this functor is an isomorphism of groupoids under the additional hypothesis that $f \from X \to Z$ is a 1-fibration of graphs.

We will need the following couple of lemmas.

\begin{lemma} \label{lem:choose-nice-reps}
    Consider a pullback square in $\Graph$
    \[
    \begin{tikzcd}
        X \times_{Z} Y \arrow[r] \arrow[d] \arrow[dr,phantom,very near start,"\lrcorner"] & X \arrow[d,"f"]  \\
        Y \arrow[r,"g"] & Z
    \end{tikzcd}
    \]
    where $f \from X \to Z$ is a 1-fibration of graphs.
    Given any morphism $\left(\left[\eta\right],\left[\tau\right]\right) \from \left(x,y\right) \to \left(x',y'\right)$ in $\Pi_{1}X \times_{\Pi_{1}Z} \Pi_{1}Y$, it is always possible to choose representative paths $\eta'$ and $\tau'$ from the path-homotopy classes $\left[\eta\right]$ and $\left[\tau\right]$ respectively, such that $f \circ \eta' = g \circ \tau'$ as paths in $Z$.
\end{lemma}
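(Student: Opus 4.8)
The plan is to leave $\tau$ unchanged and to modify only $\eta$. Since $([\eta],[\tau])$ is a morphism of the pullback groupoid, we have $[f \circ \eta] = [g \circ \tau]$ in $\Pi_1 Z$; that is, the two paths $f \circ \eta$ and $g \circ \tau$, both running from $f(x) = g(y)$ to $f(x') = g(y')$ in $Z$, are path-homotopic. It therefore suffices to produce a path $\eta' \from x \rightsquigarrow x'$ in $X$ with $[\eta'] = [\eta]$ and $f \circ \eta' = g \circ \tau$, since we may then take $\tau' = \tau$. I would obtain $\eta'$ by lifting along the $1$-fibration $f$ a path-homotopy in $Z$ witnessing $[f \circ \eta] = [g \circ \tau]$, starting from the path $\eta$ that already lives in $X$.

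Concretely, fix a path-homotopy $H \from I_\infty \gtimes I_m \to Z$ of some length $m$ with $H(-,0) = f \circ \eta$ and $H(-,m) = g \circ \tau$, each slice $H(-,j)$ being a path from $f(x)$ to $f(x')$. Restricting $H$ to each edge $j \sim j+1$ of $I_m$ produces an elementary path-homotopy which, after stabilizing the homotopy coordinate, I regard as a $2$-cube $H_j$ of $\gnerve Z$. I would lift these one at a time. Assume inductively that the slice $H(-,j)$ has been lifted to a path $\eta_j \from x \rightsquigarrow x'$ in $X$, with $\eta_0 = \eta$. The $2$-cube $H_j$, together with the lift $\eta_j$ of its $\partial_{2,0}$-face and the degenerate $1$-cubes at $x$ and $x'$ as lifts of its $\partial_{1,0}$- and $\partial_{1,1}$-faces, assembles into a lifting problem for $\gnerve f$ against the open-box inclusion into $\cube{2}$ that omits the $\partial_{2,1}$-face, one of the boxes $\obox{2}{i,\varepsilon} \hookrightarrow \cube{2}$ belonging to $J_1'$. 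As $f$ is a $1$-fibration, $\gnerve f$ has the right lifting property against it, and a filler is a $2$-cube of $\gnerve X$ lying over $H_j$ whose remaining face $\partial_{2,1}$ is a path $\eta_{j+1} \from x \rightsquigarrow x'$ satisfying $f \circ \eta_{j+1} = H(-,j+1)$ and $[\eta_{j+1}] = [\eta_j]$.

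After $m$ such steps I set $\eta' = \eta_m$: concatenating the lifted elementary homotopies gives a path-homotopy $\eta \Rightarrow \eta'$ rel endpoints in $X$, so $[\eta'] = [\eta]$, while $f \circ \eta' = H(-,m) = g \circ \tau$, as required; the base case $m = 0$ is trivial since then $f \circ \eta = g \circ \tau$ already. The step demanding the most care is the translation between graph-level path-homotopies and cubes of the nerve: I must confirm that the elementary homotopies $H_j$ are genuinely stable in both coordinates, so that they define $2$-cubes, which follows from choosing a single stabilization constant $M$ valid for the finitely many slices of $H$ at once, and that the three prescribed faces of each open box agree on the corners of $\cube{2}$, which holds because $\eta_j$ has endpoints $x$ and $x'$ and the side faces are precisely the degeneracies at those vertices.
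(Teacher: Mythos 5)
Your proof is correct, and it rests on the same basic fact as the paper's --- that the $1$-fibration $f$ lets you lift into $X$ a path-homotopy $H$ in $Z$ witnessing $[f \circ \eta] = [g \circ \tau]$ --- but the two arguments decompose the lifting differently. The paper performs exactly two lifting problems regardless of the length of $H$: it first lifts the path $g \circ \tau$ against $\obox{1}{1,0} \hookrightarrow \cube{1}$ to a path $\tilde{\tau}$ starting at $x$ but ending at some possibly different vertex $x''$, and then fills a single open box $\obox{2}{1,1} \hookrightarrow \cube{2}$, regarding all of $H$ as one $2$-cube of $\gnerve Z$, with prescribed faces $\eta$, the degeneracy at $x$, and $\tilde{\tau}$. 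The face of the filler that was left free is a path $\alpha \from x' \rightsquigarrow x''$ with constant image in $Z$, and the endpoint discrepancy is repaired by setting $\eta' = \tilde{\tau} \ast \overline{\alpha}$. You instead slice $H$ into its $m$ elementary homotopies and induct, each step filling the box $\obox{2}{2,1} \hookrightarrow \cube{2}$ whose two side faces are prescribed to be degenerate at $x$ and $x'$; this forces every intermediate lift to be a path from $x$ to $x'$, so you never need the auxiliary lift $\tilde{\tau}$, the correction path $\alpha$, or any concatenation --- at the cost of an induction on $m$ and the stabilization bookkeeping you rightly flag (a single constant $M$ works for the finitely many slices). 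Both routes are valid: yours is more elementary in that it never leaves the class of rel-endpoints homotopies and needs no repair step, while the paper's keeps the number of lifting problems independent of the length of the homotopy.
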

\begin{proof}
    We start by choosing arbitrary representative paths $\eta$ and $\tau$ from the path-homotopy classes $\left[\eta\right]$ and $\left[\tau\right]$ respectively.
    Since the pair $\left(\left[\eta\right],\left[\tau\right]\right)$ is a morphism in $\Pi_{1}X \times_{\Pi_{1}Z} \Pi_{1}Y$, we have $\left[f \circ \eta\right] = \left[g \circ \tau\right]$ as morphisms in $\Pi_{1}Z$.
    Thus, we can also choose a path-homotopy $H \from f \circ \eta \Rightarrow g \circ \tau$ in $Z$.
    
    Consider the following lifting problem:
    \[
    \begin{tikzcd}
        \obox{1}{1,0} \arrow[d,hook] \arrow[r,"x"] & NX \arrow[d,"Nf"] \\
        \cube{1} \arrow[r,"g \circ \tau"] & NZ
    \end{tikzcd}
    \]
    Since $f \from X \to Z$ is a 1-fibration of graphs, this lifting problem admits a solution $\tilde{\tau} \from \cube{1} \to NX$.
    That is, we have a path $\tilde{\tau}$ in $X$ that starts at $x$ and satisfies $f \circ \tilde{\tau} = g \circ \tau$.
    Let $x'' \in X$ be the end-point of $\tilde{\tau}$.

    Next, we consider the following lifting problem:
    \[
    \begin{tikzcd}[column sep = large]
        \obox{2}{1,1} \arrow[d,hook] \arrow[r,"{[\eta,x,\tilde{\tau}]}"] & NX \arrow[d,"Nf"] \\
        \cube{2} \arrow[r,"H"] & NZ
    \end{tikzcd}
    \]
    Once again, since $f \from X \to Z$ is a 1-fibration of graphs, this lifting problem admits a solution $\tilde{H} \from \cube{2} \to NX$.
    That is, we have a map $\tilde{H} \from I_{\infty} \gtimes I_{\infty} \to X$ that stabilizes in all directions, and satisfies the following conditions:
    \[
        \tilde{H} \partial_{2,0} = \eta, \quad \tilde{H} \partial_{1,0} = x \degen{1}{1}, \qquad \tilde{H} \partial_{2,1} = \tilde{\tau}, \qquad f \circ \tilde{H} = H
    \]
    Let $\alpha = \tilde{H} \partial_{1,1}$.
    Then, $\alpha$ is a path from $x'$ to $x''$ in $X$, satisfying $f \circ \alpha = c_{f\left(x'\right)} = c_{g\left(y'\right)}$.

    Letting $\eta' = \tilde{\tau} \ast \overline{\alpha}$ and $\tau' = \tau$, we observe that we have a path-homotopy $\eta \Rightarrow \eta'$ in $X$, a path-homotopy $\tau \Rightarrow \tau'$ in $Y$ and furthermore, we have:
    \[
        f \circ \eta' = f \circ \left(\tilde{\tau} \ast \overline{\alpha}\right) = f \circ \tilde{\tau} \ast f \circ \overline{\alpha} = g \circ \tau \ast c_{g\left(y'\right)} = g \circ \tau = g \circ \tau'.
        \qedhere
    \]
\end{proof}

\begin{lemma} \label{lem:choice-of-reps-doesnt-matter}
    Consider a pullback square in $\Graph$
    \[
    \begin{tikzcd}
        X \times_{Z} Y \arrow[r] \arrow[d] \arrow[dr,phantom,very near start,"\lrcorner"] & X \arrow[d,"f"]  \\
        Y \arrow[r,"g"] & Z
    \end{tikzcd}
    \]
    where $f \from X \to Z$ is a 1-fibration of graphs.
    Suppose we have two paths $\eta, \eta' \from x \rightsquigarrow x'$ in $X$ and two paths $\tau, \tau' \from y \rightsquigarrow y'$ in $Y$, subject to the following conditions: $f \circ \eta = g \circ \tau$ as paths in $Z$, $f \circ \eta' = g \circ \tau'$ as paths in Z, $\left[\eta\right] = \left[\eta'\right]$ as morphisms in $\Pi_{1}X$ and $\left[\tau\right] = \left[\tau'\right]$ as morphisms in $\Pi_{1}Y$.
    Then, we have $\left[\left(\eta,\tau\right)\right] = \left[\left(\eta',\tau'\right)\right]$ as morphisms in $\Pi_{1}\left(X \times_{Z} Y\right)$.
\end{lemma}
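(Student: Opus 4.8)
The plan is to exhibit a path-homotopy between $(\eta,\tau)$ and $(\eta',\tau')$ inside the pullback graph $X \times_{Z} Y$, working throughout in the cubical nerve. Since $N$ preserves finite limits (\cref{thm:cubical-nerve-of-graphs}), we have $N(X \times_{Z} Y) = NX \times_{NZ} NY$, so a path in $X \times_{Z} Y$ is a pair of edges of $NX$ and $NY$ with a common image in $NZ$, and a path-homotopy is a $2$-cube $\cube{2} \to NX \times_{NZ} NY$, i.e. a pair of $2$-cubes agreeing after applying $f$ and $g$. Writing $\zeta = f \circ \eta = g \circ \tau$ and $\zeta' = f \circ \eta' = g \circ \tau'$, the hypotheses supply path-homotopies $K \from \eta \Rightarrow \eta'$ in $X$ and $L \from \tau \Rightarrow \tau'$ in $Y$, hence $2$-cubes $fK$ and $gL$ in $NZ$ sharing the same boundary (the edges $\zeta, \zeta'$ together with the constant edges at $f(x), f(x')$). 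The obstruction to simply pairing $K$ with $L$ is that $fK$ and $gL$ need not be equal; the whole difficulty is to synchronize them, and this is exactly where the $1$-fibration hypothesis on $f$ enters.

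First I would lift $gL$. Since $f$ is a $1$-fibration, $Nf$ has the right lifting property against $J_{1}'$, in particular against $\obox{2}{i,\varepsilon} \hookrightarrow \cube{2}$. Filling the open box whose three specified faces are $\eta$ and the two constant edges at $x$ and $x'$, over the square $gL \from \cube{2} \to NZ$, produces a $2$-cube $\bar{K} \from \eta \Rightarrow \eta''$ in $X$ with $f\bar{K} = gL$, whose terminal face $\eta''$ satisfies $f\eta'' = \zeta'$. Pairing $\bar{K}$ with $L$ then gives an honest $2$-cube in $NX \times_{NZ} NY$, i.e. a path-homotopy $(\eta,\tau) \Rightarrow (\eta'',\tau')$ in $X \times_{Z} Y$, so $[(\eta,\tau)] = [(\eta'',\tau')]$ in $\Pi_{1}(X \times_{Z} Y)$. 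This reduces the problem to comparing $(\eta'',\tau')$ with $(\eta',\tau')$, where now $f\eta'' = f\eta' = \zeta'$, and where concatenating $\overline{\bar{K}}$ with $K$ yields a path-homotopy $K' \from \eta'' \Rightarrow \eta'$ in $X$, so that $[\eta''] = [\eta']$ in $\Pi_{1}X$.

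The second and crucial step is to upgrade $K'$ to a path-homotopy that becomes constant after applying $f$. Here I would use the remaining generator of $J_{1}'$, the inclusion $\obox{3}{i,\varepsilon} \hookrightarrow \bdcube{3}$. I would set up the $3$-cube lifting problem whose upstairs open box consists of $K'$ together with the degenerate $2$-cubes on $\eta''$, on $\eta'$, and on the two constant edges, and whose downstairs boundary $\bdcube{3} \to NZ$ carries $fK'$ on the remaining specified face, the degenerate $2$-cube on $\zeta'$ on the opposite face, and degeneracies elsewhere. One checks that these faces have matching bounding edges and so glue to a genuine map $\bdcube{3} \to NZ$; note that we only ever need the downstairs datum to be a boundary sphere, never a filled $3$-cube. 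Applying the right lifting property yields a $2$-cube $H_{X} \from \eta'' \Rightarrow \eta'$ in $X$ with $fH_{X}$ the degenerate $2$-cube on $\zeta'$. Pairing $H_{X}$ with the degenerate homotopy on $\tau'$ (whose image under $g$ is likewise the degenerate $2$-cube on $\zeta'$) produces a path-homotopy $(\eta'',\tau') \Rightarrow (\eta',\tau')$ in $X \times_{Z} Y$, giving $[(\eta'',\tau')] = [(\eta',\tau')]$ and hence the claim.

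I expect the main obstacle to be precisely this second step. The self-homotopy $fK'$ of $\zeta'$ is a genuinely $2$-dimensional datum, and for an arbitrary Kan fibration one could not straighten $K'$ to a fiberwise homotopy: there would be a $\pi_{2}$-type obstruction, and $\eta''$ and $\eta'$ could be path-homotopic in $X$ yet not over a constant path in $Z$. What rescues the argument is exactly that $f$ is a \emph{$1$}-fibration: the lifting property against $\obox{3}{i,\varepsilon} \hookrightarrow \bdcube{3}$ — rather than against $\obox{3}{i,\varepsilon} \hookrightarrow \cube{3}$, which is all a Kan fibration would supply — encodes the vanishing of this obstruction, reflecting that we are working with homotopy $1$-types. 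The remaining work, namely checking that the prescribed faces assemble into the correct open boxes and boundary spheres with matching degeneracies, is routine but bookkeeping-heavy, and is what I would carry out in full detail.
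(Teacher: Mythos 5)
Your proof is correct, and both lifting properties you invoke are genuinely available to a $1$-fibration: the $2$-dimensional box fillings $\obox{2}{i,\varepsilon} \hookrightarrow \cube{2}$ come from the first half of $J_{1}'$, and the sphere-filling $\obox{3}{i,\varepsilon} \hookrightarrow \bdcube{3}$ from the second. The paper, however, does the whole job in a \emph{single} lifting step: starting from arbitrary path-homotopies $H \from \eta \Rightarrow \eta'$ in $X$ and $G \from \tau \Rightarrow \tau'$ in $Y$, it assembles a boundary sphere $\Theta \from \bdcube{3} \to \gnerve Z$ whose $(3,0)$- and $(3,1)$-faces are $f \circ H$ and $g \circ G$ (with degeneracies on the remaining four faces), together with the open box $\theta \from \obox{3}{3,1} \to \gnerve X$ consisting of $H$ and degeneracies; lifting against $\gnerve f$ fills the missing face with a homotopy $H' \from \eta \Rightarrow \eta'$ satisfying $f \circ H' = g \circ G$ on the nose, so that $(H',G)$ is already the required path-homotopy in $X \times_{Z} Y$. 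Your two-step route --- first re-choosing the comparison homotopy in $X$ to lie exactly over $g \circ L$ by an ordinary Kan filling (producing the intermediate path $\eta''$), then straightening the leftover self-homotopy $K'$ of $\zeta'$ fiberwise by the $\bdcube{3}$-lifting --- is in effect a factorization of the paper's one step. What your decomposition buys is conceptual transparency: it isolates exactly where the $1$-fibration hypothesis, as opposed to mere Kan fibrancy, is indispensable, namely in the fiberwise straightening, where the $\pi_{2}$-type obstruction you describe would otherwise block the argument; this matches precisely why the paper's $J_{n}'$ contains $\obox{n+2}{i,\varepsilon} \hookrightarrow \bdcube{n+2}$ rather than $\obox{n+2}{i,\varepsilon} \hookrightarrow \cube{n+2}$. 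What the paper's choice buys is economy: by placing $g \circ G$ itself on the face $(3,1)$ of the downstairs sphere, the preliminary normalization (and the bookkeeping of reversing and concatenating homotopies) becomes unnecessary. The remaining face-matching verifications you defer are indeed routine and check out.
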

\begin{proof}
    We start by choosing arbitrary path-homotopies $H \from \eta \Rightarrow \eta'$ in $X$ and $G \from \tau \Rightarrow \tau'$ in $Y$.
    Let $z = f\left(x\right) = g\left(y\right)$ and $z' = f\left(x'\right) = g\left(y'\right)$.
    Let $\alpha = f \circ \eta = g \circ \tau$ and $\alpha' = f \circ \eta' = g \circ \tau'$.
    Then, we can define a map $\Theta \from \bd \cube{3} \to NZ$ as follows:
        
    \begin{minipage}{0.499\textwidth}
    \begin{align*}
        \Theta \mid_{\image \face{3}{1,0}} &= z \degen{1}{1} \degen{2}{2} \\
        \Theta \mid_{\image \face{3}{2,0}} &= \alpha \degen{2}{2} \\
        \Theta \mid_{\image \face{3}{3,0}} &= f \circ H
    \end{align*}
    \end{minipage}
    \begin{minipage}{0.499\textwidth}
    \begin{align*}
        \Theta \mid_{\image \face{3}{1,1}} &= z' \degen{1}{1} \degen{2}{2} \\
        \Theta \mid_{\image \face{3}{2,1}} &= \alpha' \degen{2}{2} \\
        \Theta \mid_{\image \face{3}{3,1}} &= g \circ G
    \end{align*}
    \end{minipage}
    
    We can also define a map $\theta \from \obox{3}{3,1} \to NX$ as follows:

    \begin{minipage}{0.499\textwidth}
    \begin{align*}
        \theta \mid_{\image \face{3}{1,0}} &= x \degen{1}{1} \degen{2}{2} \\
        \theta \mid_{\image \face{3}{2,0}} &= \eta \degen{2}{2} \\
        \theta \mid_{\image \face{3}{3,0}} &= H
    \end{align*}
    \end{minipage}
    \begin{minipage}{0.499\textwidth}
    \begin{align*}
        \theta \mid_{\image \face{3}{1,1}} &= x' \degen{1}{1} \degen{2}{2} \\
        \theta \mid_{\image \face{3}{2,1}} &= \eta' \degen{2}{2} \\ &
    \end{align*}
    \end{minipage}
    
    Consider the following lifting problem:
    \[
    \begin{tikzcd}
        \obox{3}{3,1} \arrow[r,"\theta"] \arrow[d,hook] & NX \arrow[d,"Nf"]  \\
        \bd \cube{3} \arrow[r,"\Theta"] & NZ
    \end{tikzcd}
    \]
    Since $f \from X \to Z$ is a 1-fibration of graphs, this lifting problem admits a solution $\tilde{\Theta} \from \bd \cube{3} \to NX$.
    Let $H' = \tilde{\Theta} \mid_{\image \face{3}{3,1}}$.
    Then, $H'$ is a path-homotopy $\eta \Rightarrow \eta'$ that satisfies $f \circ H' = g \circ G$.
    Thus, the pair $\left(H',G\right) \from I_{\infty} \gtimes I_{\infty} \to X \times_{Z} Y$ defines a path-homotopy $\left(\eta,\tau\right) \Rightarrow \left(\eta',\tau'\right)$ in $X \times_{Z} Y$.
    It follows that $\left[\left(\eta,\tau\right)\right] = \left[\left(\eta',\tau'\right)\right]$ as morphisms in $\Pi_{1}\left(X \times_{Z} Y\right)$.
\end{proof}

We are now ready to prove that the fundamental groupoid functor $\Pi_{1} \from \Graph_{1} \to \Gpd$ preserves pullbacks along $1$-fibrations.

\begin{proposition} \label{prop:fund-gpd-preserves-pullbacks-along-fibrations}
    Consider a pullback square in $\Graph$
    \[
    \begin{tikzcd}
        X \times_{Z} Y \arrow[r] \arrow[d] \arrow[dr,phantom,very near start,"\lrcorner"] & X \arrow[d,"f"]  \\
        Y \arrow[r,"g"] & Z
    \end{tikzcd}
    \]
    where $f \from X \to Z$ is a 1-fibration of graphs.
    Then, the canonical functor
    \[
        \Psi \from \Pi_{1}\left(X \times_{Z} Y\right) \to \Pi_{1}X \times_{\Pi_{1}Z} \Pi_{1}Y
    \]
    is an isomorphism of groupoids.
\end{proposition}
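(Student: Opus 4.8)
The plan is to show that $\Psi$ is bijective on both objects and on morphisms, which for a functor of groupoids is exactly what it means to be an isomorphism. The object part is immediate: as observed in the discussion preceding the statement, both $\Pi_{1}\left(X \times_{Z} Y\right)$ and $\Pi_{1}X \times_{\Pi_{1}Z} \Pi_{1}Y$ have as objects the pairs $(x,y)$ of vertices with $f(x) = g(y)$, and $\Psi$ acts as the identity on objects. Hence it suffices to check that $\Psi$ is fully faithful, i.e.\ that for each pair of objects $(x,y)$ and $(x',y')$ the induced map on hom-sets is a bijection. I would treat surjectivity (fullness) and injectivity (faithfulness) separately, each reducing to one of the two preceding lemmas.

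For fullness, I would take an arbitrary morphism $\left(\left[\eta\right],\left[\tau\right]\right) \from (x,y) \to (x',y')$ in $\Pi_{1}X \times_{\Pi_{1}Z} \Pi_{1}Y$, which by definition satisfies $\left[f \circ \eta\right] = \left[g \circ \tau\right]$ in $\Pi_{1}Z$. By \cref{lem:choose-nice-reps} --- this is exactly where the hypothesis that $f$ is a $1$-fibration is used --- I may replace $\eta$ and $\tau$ by path-homotopic representatives $\eta'$ and $\tau'$ for which $f \circ \eta' = g \circ \tau'$ holds strictly as paths in $Z$. The pair $(\eta',\tau')$ then assembles into a genuine path in the pullback graph $X \times_{Z} Y$, hence a morphism $\left[(\eta',\tau')\right]$ of $\Pi_{1}\left(X \times_{Z} Y\right)$, and by construction $\Psi\left[(\eta',\tau')\right] = \left(\left[\eta'\right],\left[\tau'\right]\right) = \left(\left[\eta\right],\left[\tau\right]\right)$.

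For faithfulness, I would suppose that two morphisms $\left[(\eta,\tau)\right]$ and $\left[(\eta',\tau')\right]$ of $\Pi_{1}\left(X \times_{Z} Y\right)$ have the same image under $\Psi$, so that $\left[\eta\right] = \left[\eta'\right]$ in $\Pi_{1}X$ and $\left[\tau\right] = \left[\tau'\right]$ in $\Pi_{1}Y$, while $f \circ \eta = g \circ \tau$ and $f \circ \eta' = g \circ \tau'$ hold strictly (since these are legitimate morphisms of the pullback groupoid). This is precisely the situation of \cref{lem:choice-of-reps-doesnt-matter}, which --- again invoking that $f$ is a $1$-fibration --- yields $\left[(\eta,\tau)\right] = \left[(\eta',\tau')\right]$ in $\Pi_{1}\left(X \times_{Z} Y\right)$. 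Combining the two cases shows that $\Psi$ restricts to a bijection on each hom-set, and together with bijectivity on objects this proves $\Psi$ is an isomorphism of groupoids.

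The proposition itself therefore presents no further obstacle: all of the genuine content has been isolated into the two preceding lemmas, whose proofs exploit the $1$-fibration hypothesis by solving lifting problems against open-box and boundary inclusions in the cubical nerve. The conceptually delicate step --- and the true heart of the argument --- is the passage from a pair of paths that agree only up to path-homotopy in $Z$ to a pair that agree strictly (\cref{lem:choose-nice-reps}), and dually the detection of when two strictly-commuting pairs become homotopic in the pullback (\cref{lem:choice-of-reps-doesnt-matter}); once these are in hand, the present proof is a purely formal check that a fully faithful, bijective-on-objects functor is an isomorphism.
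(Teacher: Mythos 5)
Your proposal is correct and follows essentially the same route as the paper: both arguments delegate all substantive content to \cref{lem:choose-nice-reps} and \cref{lem:choice-of-reps-doesnt-matter}, using the former to produce strictly commuting representatives and the latter to identify homotopy classes in the pullback. The only difference is packaging --- the paper constructs an explicit inverse functor $\Phi$ (with the two lemmas guaranteeing its definability and well-definedness), whereas you verify that $\Psi$ is bijective on objects and fully faithful --- and these are formally equivalent formulations of the same argument.
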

\begin{proof}
    We construct a functor
    \[
        \Phi \from \Pi_{1}X \times_{\Pi_{1}Z} \Pi_{1}Y \to \Pi_{1}\left(X \times_{Z} Y\right)
    \]
    that is inverse to $\Psi$.
    Given a morphism $\left(\left[\eta\right],\left[\tau\right]\right) \from \left(x,y\right) \to \left(x',y'\right)$ in $\Pi_{1}X \times_{\Pi_{1}Z} \Pi_{1}Y$, by \cref{lem:choose-nice-reps}, we can choose representative paths $\eta'$ and $\tau'$ from the path-homotopy classes $\left[\eta\right]$ and $\left[\tau\right]$ respectively, such that $f \circ \eta' = g \circ \tau'$.
    The pair $\left(\eta',\tau'\right)$ then defines a path in the pullback graph $X \times_{Z} {Y}$.
    Let $\Phi$ map the morphism $\left(\left[\eta\right],\left[\tau\right]\right) \from \left(x,y\right) \to \left(x',y'\right)$ in $\Pi_{1}X \times_{\Pi_{1}Z} \Pi_{1}Y$ to the morphism $\left[\left(\eta',\tau'\right)\right] \from \left(x,y\right) \to \left(x',y'\right)$ in $\Pi_{1}\left(X \times_{Z} Y\right)$.
    
    By \cref{lem:choice-of-reps-doesnt-matter}, this assignment is well-defined in the sense that it is independent of the specific choice of the representative paths $\eta'$ and $\tau'$.
    Furthermore, this assignment is functorial.

    Since $\left[\eta\right] = \left[\eta'\right]$ and $\left[\tau\right] = \left[\tau'\right]$, we have $\Psi \circ \Phi = \mathrm{id}_{\Pi_{1}X \times_{\Pi_{1}Z} \Pi_{1}Y}$.

    On the other hand, given a morphism $\left[\left(\eta,\tau\right)\right] \from \left(x,y\right) \to \left(x',y'\right)$ in $\Pi_{1}\left(X \times_{Z} Y\right)$, any choice of representative path $\left(\eta,\tau\right)$ in $X \times_{Z} Y$ gives us a choice of representative paths $\eta$ in $X$ and $\tau$ in $Y$ that satisfy the condition $f \circ \eta = g \circ \tau$.
    Thus, we have $\Phi \circ \Psi = \mathrm{id}_{\Pi_{1}X \times_{\Pi_{1}Z} \Pi_{1}Y}$.
\end{proof}

\begin{proof}[Proof of \cref{thm:fund-gpd-is-exact}]
    The functor $\Pi_{1} \from \Graph_{1} \to \Gpd$ clearly preserves weak equivalences and the terminal object.
    By \cref{prop:fund-gpd-preserves-fibrations}, it preserves fibrations.
    Thus, it also preserves acyclic fibrations.
    By \cref{prop:fund-gpd-preserves-pullbacks-along-fibrations}, it preserves pullbacks along fibrations.
    Thus, it is an exact functor of fibration categories (see \cite[Def.~1.2]{szumilo-cofibration-categories}).
\end{proof}

Finally, we prove that the fundamental groupoid functor $\Pi_{1} \from \Graph \to \Gpd$ is a weak equivalence of fibration categories.

\begin{proposition}\label{prop:approximation-property-for-fund-gpd}
    Given any functor of groupoids $F \from \cat{G} \to \Pi_{1}Y$, there exists a graph map $f \from X \to Y$ such that there is a commutative square of the form:
    \[
    \begin{tikzcd}
        \cat{G}' \arrow[r,"\sim"] \arrow[d,"\sim"'] & \Pi_{1}X \arrow[d,"\Pi_{1}f"] \\
        \cat{G} \arrow[r,"F"] & \Pi_{1}Y
    \end{tikzcd}
    \]
    where $\cat{G}' \xrightarrow{\sim} \cat{G}$ and $\cat{G}' \xrightarrow{\sim} \Pi_{1}X$ are equivalences of categories.
\end{proposition}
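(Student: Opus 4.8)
The plan is to prove this approximation property by explicitly constructing the graph $X$ and the map $f$ that realize $F$ up to equivalence, reducing the whole problem to the realization of group homomorphisms. Since $\gnerve$ sends a disjoint union of graphs to the coproduct of their nerves, the functor $\Pi_{1}$ preserves coproducts of groupoids, and equivalences of groupoids may be checked on connected components. I would therefore first decompose $\cat{G} = \coprod_{\alpha} \cat{G}_{\alpha}$ into connected components, each of which is equivalent to the one-object groupoid $BG_{\alpha}$ on the automorphism group $G_{\alpha} = \cat{G}_{\alpha}(a_{\alpha},a_{\alpha})$ of a chosen object $a_{\alpha}$. Restricting $F$ to $\cat{G}_{\alpha}$ records, via the identification $\Pi_{1}Y = \Ho(\gnerve Y)$ of \cref{thm:cubical-nerve-of-graphs}, exactly a vertex $y_{\alpha} = F(a_{\alpha})$ of $Y$ together with a homomorphism $\phi_{\alpha} \from G_{\alpha} \to A_{1}(Y,y_{\alpha})$. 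It thus suffices to solve, for each component, the realization problem: given a group $G$, a vertex $y_{0}$ of $Y$, and a homomorphism $\phi \from G \to A_{1}(Y,y_{0})$, construct a connected pointed graph $(X,x_{0})$ and a based graph map $f \from X \to Y$ inducing $\phi$ under an isomorphism $A_{1}(X,x_{0}) \iso G$; the total data is then obtained by taking disjoint unions over $\alpha$.

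To solve this realization problem I would work from a presentation $\langle S \mid R\rangle$ of $G$. For the generators I would form the wedge $X_{1}$ of cycle graphs $C_{m}$ (with $m \geq 5$, so that each summand has $A_{1}(C_{m}) \iso \Z$), one per generator $s \in S$, all glued at the basepoint $x_{0}$, with $A_{1}(X_{1},x_{0})$ the free group on $S$; the map $f_{1} \from X_{1} \to Y$ sends the $s$-th cycle to a chosen based loop in $Y$ representing $\phi(s)$. For each relation $r \in R$, the word $r$ traces a closed loop in $X_{1}$ whose image under $f_{1}$ represents $\phi(r) = 1$ and is therefore null-homotopic in $Y$; a chosen null-homotopy is a based $A$-homotopy contracting this loop, which I would use to extend $f_{1}$ across a contractible ``disk'' filler $D_{r}$ glued to $X_{1}$ along that loop (a sufficiently subdivided gridded polygon having the length-$\lvert r\rvert$ cycle as boundary). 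Setting $X = X_{1} \cup \bigcup_{r} D_{r}$ and letting $f$ be the common extension yields a graph and map with $A_{1}(X,x_{0}) \iso G$ and $f_{*} = \phi$.

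It remains to assemble the strict square. The construction yields an equivalence $e \from \cat{G} \to \Pi_{1}X$ (on each component, $BG \to \Pi_{1}X_{\alpha}$) together with a natural isomorphism $\Pi_{1}f \circ e \cong F$ rather than a strict equality. To rectify this I would first factor $f$ in $\Graph_{1}$ as a $1$-equivalence followed by a $1$-fibration (\cref{thm:fib-cat-Graph-n}) and replace $X$ by the intermediate graph, so that we may assume $f$ is a $1$-fibration; then $\Pi_{1}f$ is an isofibration of groupoids by \cref{prop:fund-gpd-preserves-fibrations}. Lifting the natural isomorphism $\Pi_{1}f\circ e \cong F$ along this isofibration replaces $e$ by a naturally isomorphic, hence still essentially surjective and fully faithful, functor $e' \from \cat{G} \to \Pi_{1}X$ satisfying $\Pi_{1}f \circ e' = F$ on the nose. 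Taking $\cat{G}' = \cat{G}$ with the identity as the left-hand equivalence and $e'$ as the top equivalence then gives the required commuting square; reassembling over the components $\alpha$ completes the argument.

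The main obstacle is the realization of the second paragraph, and specifically the two homotopy-type computations: that the wedge of cycles has free $A_{1}$ and that attaching the fillers $D_{r}$ imposes exactly the normal closure of $R$. Since discrete homotopy theory does not yet furnish a van Kampen theorem, these cannot be argued purely combinatorially; instead I would transport them to spaces through the nerve, using $A_{k}(-)\iso \pi_{k}(\gnerve -)$ from \cref{thm:cubical-nerve-of-graphs}, the comparison $\pi_{k}(\gnerve -)\iso \pi_{k}(T\gnerve -)$ of \cref{prop:comparison-homotopy-groups}, and the Quillen equivalence of \cref{thm:triangulation}. The delicate point is that $\gnerve$ does not preserve colimits, so the gluings defining $X_{1}$ and the $D_{r}$ must be arranged---by subdividing the cycles and fillers sufficiently---so that $\gnerve$ carries them to homotopy pushouts of spaces; granting this, the classical van Kampen theorem applied to $\gnerve X$ yields both computations and hence the proposition.
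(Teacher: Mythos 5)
Your reduction of the statement to realizing, component by component, a homomorphism $\phi_{\alpha} \from G_{\alpha} \to A_{1}(Y,y_{\alpha})$ by a based graph map, and your final rectification step (factor $f$ as a $1$-equivalence followed by a $1$-fibration, then lift the natural isomorphism along the isofibration $\Pi_{1}f$ using \cref{prop:fund-gpd-preserves-fibrations}) are both correct. The genuine gap is exactly the one you flag, and your proposed workaround does not close it: the two computations in your second paragraph (the wedge of cycles has free $A_{1}$; attaching the fillers $D_{r}$ imposes exactly the normal closure of $R$) cannot be obtained by pushing the gluings through the nerve, because $\gnerve$ preserves finite limits but is not known to send any useful class of graph pushouts to homotopy pushouts of cubical sets --- that is essentially the open comparison problem described in the paper's introduction, so a proof cannot rest on it. Your premise that discrete homotopy theory has no van Kampen theorem is also mistaken: the reference \cite{kapulkin-mavinkurve} proves a discrete Seifert--van Kampen theorem for $\Pi_{1}$ of graphs, and the realization result \cite[Thm.~6.5]{kapulkin-mavinkurve} (every group is $A_{1}$ of a connected graph) is obtained from it there. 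The combinatorial route through that theorem, not the route through $\gnerve$, is the one that can be made to work; with it, your presentation-complex construction and the subsequent assembly would go through.

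For comparison, the paper's own proof takes a much shorter and genuinely different route: it decomposes $\cat{G}$ into components, invokes \cite[Thm.~6.5]{kapulkin-mavinkurve} to produce connected graphs $X_{i}$ with $A_{1}(X_{i},x_{i}) \cong G_{i} = \mathrm{Aut}(g_{i})$, sets $\cat{G}' = \coprod_{i} G_{i}$ (a coproduct of one-object groupoids), and takes $f$ to be the \emph{constant} map at the vertex $F(g_{i})$ on each component $X_{i}$; it never attempts to realize the homomorphisms $F|_{G_{i}}$. Observe, however, that with a constant $f$ the composite $\Pi_{1}f \circ (\cat{G}' \to \Pi_{1}X)$ sends every automorphism to an identity, whereas $F \circ (\cat{G}' \hookrightarrow \cat{G})$ does not unless $F$ kills all automorphism groups, so strict commutativity of the square genuinely requires realizing $F$ on automorphism groups --- which is precisely what your proposal undertakes and the paper's construction does not deliver. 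In that sense your extra work is not superfluous but is addressing the actual content of the statement, and your outline becomes a complete proof once the realization step is carried out via the van Kampen theorem of \cite{kapulkin-mavinkurve} rather than via the nerve.
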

\begin{proof}
    Suppose $\cat{G} = \coprod_{i \in \mathcal{I}}{\cat{G}_{i}}$ where each $\cat{G}_{i}$ is a connected component of $\cat{G}$.
    For each $i \in \mathcal{I}$, let $G_{i}$ be the automorphism group of some object $g_{i} \in \cat{G}_{i}$.
    By \cite[Thm.~6.5]{kapulkin-mavinkurve}, there exists a connected graph $X_{i}$ such that $A_{1}(X_{i},x_{i}) \cong G_{i}$ for any vertex $x_{i} \in X_{i}$.
    Let $X = \coprod_{i \in \mathcal{I}}{X_{i}}$, and let $\cat{G}' = \coprod_{i \in \mathcal{I}}{G_{i}}$.
    Then, we have an inclusion $\cat{G}' \hookrightarrow \cat{G}$ and a functor $\cat{G}' \to \Pi_{1}X$ that maps the object $g_{i}$ to some $x_{i} \in X_{i}$.
    Both these functors are equivalences of categories.
    For each $i \in \mathcal{I}$, we define a graph map $f_{i} \from X_{i} \to Y$ to be the constant map at the vertex $F(g_{i}) \in Y$.
    And finally, we define $f \from X \to Y$ to be $f_{i}$ on each connected component $X_{i}$.
\end{proof}

\begin{theorem}\label{thm:fund-gpd-is-a-weak-equiv}
    The fundamental groupoid functor $\Pi_{1} \from \Graph_{1\text{-}\mathrm{types}} \to \Gpd$ is a weak equivalence of fibration categories.
\end{theorem}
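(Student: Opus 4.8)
The plan is to invoke the approximation theorem for fibration categories, the dual of Waldhausen's approximation theorem (see \cite{szumilo-cofibration-categories}): an exact functor between fibration categories is a weak equivalence as soon as it reflects weak equivalences and satisfies the approximation property. Exactness of $\Pi_{1} \from \Graph_{1} \to \Gpd$ is already established in \cref{thm:fund-gpd-is-exact}, so the proof reduces to verifying these two remaining conditions.

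First I would check that $\Pi_{1}$ reflects weak equivalences. Since $\Pi_{1}X = \Ho(\gnerve X)$, the isomorphism classes of objects of $\Pi_{1}X$ are the path-components of $X$, and the automorphism group of a vertex $x$ is $A_{1}(X,x) \cong \pi_{1}(\gnerve X, x)$ by \cref{thm:cubical-nerve-of-graphs}. A functor of groupoids is an equivalence of categories exactly when it is a bijection on connected components and an isomorphism on every automorphism group; translating through the identifications above, $\Pi_{1}f$ is an equivalence of groupoids if and only if $f_{*} \from \pi_{0}X \to \pi_{0}Y$ is a bijection and $f_{*} \from A_{1}(X,x) \to A_{1}(Y,fx)$ is an isomorphism for every vertex $x$ --- that is, if and only if $f$ is a $1$-equivalence. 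Hence $\Pi_{1}$ preserves and reflects weak equivalences, which disposes of the first condition.

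Next I would verify the approximation property. In its dual form it asks: given a graph $Y$ and a functor $F \from \cat{G} \to \Pi_{1}Y$ in $\Gpd$, produce a graph map $f \from X \to Y$ and a weak equivalence $\cat{G} \weto \Pi_{1}X$ through which $F$ factors as $\Pi_{1}f$ up to homotopy. This is essentially the statement of \cref{prop:approximation-property-for-fund-gpd}, which supplies $X$, $f$, and a span of equivalences $\cat{G} \weot \cat{G}' \weto \Pi_{1}X$ sitting in a square over $\Pi_{1}Y$. Because the weak equivalences of $\Gpd$ are the equivalences of categories, the leg $\cat{G}' \weto \cat{G}$ has a quasi-inverse; composing it with $\cat{G}' \weto \Pi_{1}X$ straightens the span into a single weak equivalence $\cat{G} \weto \Pi_{1}X$ compatible with the maps to $\Pi_{1}Y$, which is the factorization the approximation property demands.

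I expect the main obstacle to lie precisely in this last matching step: \cref{prop:approximation-property-for-fund-gpd} delivers a two-sided, up-to-equivalence approximation, whereas the approximation theorem is phrased in terms of a one-sided factorization. The reconciliation is, however, benign, since in $\Gpd$ every object is both fibrant and cofibrant and every weak equivalence admits a quasi-inverse, so spans of equivalences may be straightened; should a factorization through a fibration be wanted, the graph map $X \to Y$ can additionally be factored in $\Graph_{1}$ as a $1$-equivalence followed by a $1$-fibration, which $\Pi_{1}$ carries to an equivalence followed by an isofibration by \cref{prop:fund-gpd-preserves-fibrations}. With exactness, reflection of weak equivalences, and the approximation property all verified, the approximation theorem then yields that $\Pi_{1} \from \Graph_{1} \to \Gpd$ is a weak equivalence of fibration categories.
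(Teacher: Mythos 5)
Your proposal is correct and follows essentially the same route as the paper: exactness from \cref{thm:fund-gpd-is-exact}, the observation that $\Pi_{1}$ reflects weak equivalences (since equivalences of groupoids correspond exactly to $1$-equivalences via $\pi_{0}$ and $A_{1}$), and the approximation property of \cref{prop:approximation-property-for-fund-gpd} fed into Szumi{\l}o's approximation theorem \cite[Prop.~2.2]{szumilo-cofibration-categories}. Your extra worry about straightening the span of equivalences is harmless but unnecessary, as the span form delivered by \cref{prop:approximation-property-for-fund-gpd} is already the shape of approximation property that Szumi{\l}o's criterion takes as input.
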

\begin{proof}
    Observe that $\Pi_{1} \from \Graph \to \Gpd$ reflects weak equivalences.
    The required result then follows from
    \cref{prop:approximation-property-for-fund-gpd} and \cite[Prop.~2.2]{szumilo-cofibration-categories}.
\end{proof}

\bibliographystyle{amsalphaurlmod}
\bibliography{main}

\end{document}